\documentclass[12pt]{article}
\usepackage[affil-it]{authblk}
\usepackage{etex}
\usepackage[utf8]{inputenc}
\usepackage[T1]{fontenc}
\usepackage{textcomp}
\usepackage[a4paper,tmargin=3cm,bmargin=3cm,rmargin=2.2cm,lmargin=2.2cm]{geometry}
\usepackage{lmodern}
\usepackage[normalem]{ulem}

\usepackage{amsfonts,amssymb,amsmath,amscd,mathtools,cite,slashed}
\usepackage[amsthm,thmmarks,amsmath]{ntheorem}
\usepackage{graphicx}
\usepackage[all]{xy}
\usepackage{lmodern}
\usepackage{tikz}
\usetikzlibrary{shapes.misc,arrows,decorations.markings}

\usepackage{color}
\usepackage{psfrag}
\usepackage{epsfig}
\usepackage{pdflscape}
\usepackage{hyperref}
\usepackage{verbatim} 
\usepackage{graphicx}

\usepackage{xcolor}
\usepackage{framed}
\definecolor{shadecolor}{gray}{0.8}
\definecolor{lgray}{gray}{0.5}

\DeclareMathOperator{\Ker}{Ker}           
\DeclareMathOperator{\Res}{Res}         
\DeclareMathOperator{\Tr}{Tr}             
\DeclareMathOperator{\OO}{\mathcal{O}}             
\DeclareMathOperator{\Dom}{Dom}			
\DeclareMathOperator{\Sd}{Sd}			

\newcommand{\A}{\mathcal{A}}              
\renewcommand{\a}{\alpha}                    
\newcommand{\B}{\mathcal{B}}              
\newcommand{\C}{\mathbb{C}}              
\newcommand{\dt}{\partial}                    
\newcommand{\DD}{\mathcal{D}}           
\renewcommand{\H}{\mathcal{H}}          
\newcommand{\half}{{\mathchoice{\thalf}{\thalf}{\shalf}{\shalf}}}   
\newcommand{\Lc}{\mathcal{L}}          
\newcommand{\N}{\mathbb{N}}            
\newcommand{\R}{\mathbb{R}}             
\newcommand{\set}[1]{\{\,#1\,\}}              
\newcommand{\shalf}{{\scriptstyle\frac{1}{2}}} 
\newcommand{\thalf}{\tfrac{1}{2}}            
\newcommand{\x}{\times}                      
\newcommand{\wt}{\widetilde}                 
\newcommand{\Z}{\mathbb{Z}}                 
\newcommand{\Uq}{{\mathcal U}_q(\mathfrak{su}(2))}  
\newcommand{\vc}{\vcentcolon =}             
\newcommand{\cv}{= \vcentcolon}             

\def\<#1,#2>{\langle#1\,,\,#2\rangle}      
\newcommand{\norm}[1]{\left\lVert#1\right\rVert}    
\newbox\ncintdbox \newbox\ncinttbox
\setbox0=\hbox{$-$} \setbox2=\hbox{$\displaystyle\int$}
\setbox\ncintdbox=\hbox{\rlap{\hbox
    to \wd2{\kern-.1em\box2\relax\hfil}}\box0\kern.1em}
\setbox0=\hbox{$\vcenter{\hrule width 4pt}$}
\setbox2=\hbox{$\textstyle\int$}
\setbox\ncinttbox=\hbox{\rlap{\hbox
    to \wd2{\kern-.14em\box2\relax\hfil}}\box0\kern.1em}
\newcommand{\ncint}{\mathop{\mathchoice{\copy\ncintdbox}
    {\copy\ncinttbox}{\copy\ncinttbox}
    {\copy\ncinttbox}}\nolimits}
\newcommand{\oh}{{\tfrac{1}{2}}}          
\newcommand{\ket}[1]{| #1 \rangle}                  
\newcommand{\bra}[1]{\langle #1|}                  
\newcommand{\lpo}{l \! + \! 1}
\newcommand{\lmo}{l \! - \! 1}
\newcommand{\mpo}{m \! + \! 1}
\newcommand{\mmo}{m \! - \! 1}
\newcommand{\PDO}{\varPsi{}DO}				
\newcommand{\ahd}{(\mathcal{A,H,D})}		
\renewcommand{\<}{\left\langle}		
\renewcommand{\>}{\right\rangle}		
\renewcommand{\Re}{\mathfrak{R}}			

\newcommand{\ketQS}{\ket{l,m}_{\pm}}
\newcommand{\braQS}{{}_{\pm} \bra{l,m}}


\renewcommand{\bar}{\overline}		

\newcommand{\UqSU}{\mathcal{U}_q(\mathfrak{su}(2))} 		
\newcommand{\Sq}{S_q^2}			
\DeclareMathOperator{\OP}{OP}		
\DeclareMathOperator{\op}{op}		
\newcommand{\zD}{\zeta_{\DD}}		

\DeclareMathOperator{\Fp}{Fp}			
\newcommand{\Ag}{\mathbb{A}}			
\newcommand{\DA}{\DD_{\Ag}}			

\newcommand{\sump}[2]{\underset{#1}{\overset{#2}{\sum \, \!'}}}

\def\oq{\frac{1}{q}}

\newcounter{mnotecount}[section]
\renewcommand{\themnotecount}{\thesection.\arabic{mnotecount}}
\newcommand{\mnote}[1]
{\protect{\stepcounter{mnotecount}}$^{\mbox{\footnotesize
$
\bullet$\themnotecount}}$ \marginpar{
\raggedright\tiny\em
$\!\!\!\!\!\!\,\bullet$\themnotecount: #1} }


\newtheorem{assumption}{Assumption}[section]
\newtheorem{theorem}[assumption]{Theorem}

\newtheorem{Cor}[assumption]{Corollary}

\newtheorem{Lm}[assumption]{Lemma}
\newtheorem{lemma}[assumption]{Lemma}

\newtheorem{Def}[assumption]{Definition}
\newtheorem{definition}[assumption]{Definition}
\newtheorem{prop}[assumption]{Proposition}
\newtheorem{Prop}[assumption]{Proposition}
\newtheorem{rem}[assumption]{Remark}
\newtheorem{remark}[assumption]{Remark}

\newtheorem{appendixlemma}{Lemma}
\linespread{1.2}

\begin{document}


\title{Heat trace and spectral action \\ on the standard Podle\'s sphere}
\author{Micha\l\  Eckstein${}^1$}
\author{Bruno Iochum${}^2$}
\author{Andrzej Sitarz \thanks{Partially supported by NCN grant 2011/01/B/ST1/06474.} ${}^{,3,4}$}

\affil{\small ${}^1$ Faculty of Mathematics and Computer Science, Jagiellonian University,\\
{\L}{}ojasiewicza 6, 30-348 Krak\'ow, Poland,\\
michal.eckstein@uj.edu.pl}
\affil{\small ${}^2$Aix Marseille Université, CNRS et Université de Toulon. \\
CPT, UMR 7332, 13288 Marseille, France. Laboratoire affili\'e \`a la Frumam FR7332, \\
iochum@cpt.univ-mrs.fr}
\affil{\small ${}^3$ Institute of Physics, Jagiellonian University,\\
Reymonta 4, 30-059 Krak\'ow, Poland,}
\affil{\small ${}^4$ Institute of Mathematics of the Polish Academy of Sciences, \\ 
\'Sniadeckich 8, 00-950 Warszawa,  Poland.} 
\date{}

\maketitle

\begin{abstract}
We give a new definition of dimension spectrum for non-regular spectral triples and compute the exact (i.e. non only the asymptotics) heat-trace of standard Podle\'s spheres $S^2_q$ for $0<q<1$, study its behavior when $q\to 1$ and fully compute its exact spectral action for an explicit class of cut-off functions.
\vspace{1cm}
\end{abstract}
\section{Introduction}
In the classical Riemannian geometry the dimension of the manifold is fixed by its definition and then
later it can be shown to determine some properties of geometric objects. In particular, for closed
manifolds the dimension governs the growth rate of the eigenvalues of the Laplace operator, 
independently of the chosen nondegenerate metric. Unlike the classical situation, in noncommutative 
geometry \cite{Connes,ConnesMarcolli1}, this concept becomes more involved (see \cite{Manin} for a review), 
as there is no intrinsic notion of the dimension. The global description of the noncommutative space in terms 
of an algebra does allow for the possibility of a homological notion of the dimension (relative to Hochschild 
and cyclic (co)homology) or notion of metric dimension based on spectral triples. Yet, the latter is not 
a single number but rather a \textit{dimension spectrum}  \cite{ConnesMoscovici,IochumLevy}, which is 
a (possibly infinite) discrete subset of the complex plane with finite multiplicities allowed.
The dimension spectrum has been computed for various commutative\cite{ConnesMoscovici,IochumLevy,ILS,ConnesMarcolli,Lescure,CGRS} 
and noncommutative spectral triples \cite{GGBISV,EIL,AllPodles,PalSundar,CGIS,GI1,GI2,ILS}
including the quantum group $SU_q(2)$ \cite{ConnesSU2,DLSSV} and some of the Podle\'s spheres
\cite{AllPodles} (see also \cite{Equatorial}). One of the significant features of almost all spectral
triples for the $q$-deformations of manifolds is the discrepancy between the homological and
metric dimensions. Out of these examples, a spectacular one is that of the standard Podle\'s
sphere, with the equivariant real spectral triple of metric dimension $0$.

In the study of Riemannian geometry through spectral properties of the Dirac (or Laplace) operators,  
the main tool is the analysis of the asymptotic expansion of the heat kernel operator. This is well
established for Riemannian manifolds and also for noncommutative geometries with the classical
Dirac operators (like in the case of noncommutative tori or $SU_q(2)$). Yet, the case of standard
Podle\'s sphere offers a new challenge, as the type of operator that appears there has not been
studied extensively.

Our aim in this work, is to determine the exact dimension spectrum (providing a new definition, which
appears to best suit the example under consideration) of the $\UqSU$-equivariant spectral triple introduced in \cite{DabrowskiSitarz}, to compute explicitly the zeta-function relative to the absolute value
of the Dirac operator and provide the representation of the heat-trace $\Tr(e^{-t \vert\DD\vert})$. Surprisingly,
unlike in the classical case, we give not only an asymptotics for $t \to 0$ but an exact representation 
of the heat-trace. This yields also the spectral action $\Tr\big( f(\vert\DD\vert)\big)$ valid for a large class of functions. 
We discuss the properties of the heat-trace and the spectral action, showing 
that the leading parts are not changed when we add a small bounded perturbation to the Dirac operator $\DD \to \DD + A$.

 \medskip

The paper is organized as follows: we first recall the basic ingredients of the spectral triple introduced in \cite{DabrowskiSitarz}. In the next two sections we compute the meromorphic extension of the zeta-function $\zeta_\DD$ and the dimension spectrum of the standard Podle\'s sphere. But since the triple is not regular, we propose a new definition of dimension spectrum (see Definition \ref{newdimspec}) and show its link with the previous one introduced by Connes--Moscovici. Then in Section \ref{action}, we give the explicit and exact heat-trace for $0<q<1$. Moreover,  when $q$ goes to 1 the heat-trace goes to the classical heat-trace for the 2-sphere. The last section is devoted to an exact computation of the spectral action for a specified class of functions with few comments on the fluctuations by one-forms. In the conclusions, we summarize the main differences with previously computed spectral actions.

\section{Spectral triple}

Let us recall the basic facts about the spectral triple over the standard Podle\'s sphere $\Sq$. Its algebra, introduced in \cite{Podles} as an invariant subalgebra of $\A(SU_q(2))$ under a circle action, is the complex $\star$-algebra generated for some parameter $0 < q < 1$ by $A=A^*, B, B^*$ fulfilling the relations
\begin{align}
AB = q^2 BA, && AB^* = q^{-2} B^*A, && BB^* = q^{-2}A(1-A),&& B^*B = A(1- q^2 A).
\end{align}
As a $C^*$-algebra, it is isomorphic to the minimal unitization of the algebra of compact operators
on a separable Hilbert space. Throughout this paper, however, we work with the polynomial 
algebra $\A_q$ in the above generators, which is a dense subalgebra of the latter.

The Hilbert space suitable for a $\UqSU-$equivariant representation of $\A_q$ has an orthonormal basis $\ket{l,m}$ with $m \in \{-l,-l+1, \ldots, l\}$ and $l \in \oh + \N$ and is denoted by $\H_{\oh}$. There are two non-equivalent  $\UqSU-$equivariant representations $\pi_{\pm}$ of $\A_q$ on $\H_{\oh}$  given by the following formulas  \cite{DabrowskiSitarz}: 
\begin{equation}
\begin{aligned}
&\pi_{\pm}(A) \ketQS \vc  A^{+}_{l,m,\pm} \, \ket{\lpo,m}_{\pm} + A^{0}_{l,m,\pm} \, \ketQS 
+ A^{-}_{l,m,\pm} \, \ket{\lmo,m}_{\pm}\,,\\
&\pi_{\pm}(B) \ketQS \vc  B^{+}_{l,m,\pm} \, \ket{\lpo,\mpo}_{\pm} + B^{0}_{l,m,\pm} \, \ket{l,\mpo}_{\pm} 
+ B^{-}_{l,m,\pm} \, \ket{\lmo,\mpo}_{\pm},\\
&\pi_{\pm}(B^*) \ketQS \vc  \wt{B}^{+}_{l,m,\pm}  \ket{\lpo,\mmo}_{\pm} \hspace{-0.1cm}
+ \wt{B}^{0}_{l,m+1,\pm}  \ket{l,\mmo}_{\pm} \hspace{-0.1cm}+ \wt{B}^{-}_{l,m,\pm} \ket{\lmo,\mmo}_{\pm}. 
\end{aligned}
\label{rep}
\end{equation}
where the coefficients $A^j_{l,m,\pm}$, $B^j_{l,m,\pm}$, $\wt{B}^j_{l,m,\pm}$ for $j=-,0,+$ 
are of the following form:
\begin{align}
&A^+_{l,m}  \vc -q^{m+l+\tfrac{1}{2}} \sqrt{ [l-m+1][l+m+1]} \;\alpha^+_l ,\nonumber\\
&A^0_{l,m}  \vc q^{-\oh} \tfrac{1}{1+q^2} \left( [l-m+1][l+m]  - q^2 [l-m][l+m+1]
\right) \alpha^0_l + \tfrac{1}{1+q^2}\,, \label{Arep}\\
& A^-_{l,m}  \vc  q^{m-l-\tfrac{1}{2}} \sqrt{ [l-m][l+m]} \;\alpha^-_l \,,\nonumber\\
\nonumber\\
&B^+_{l,m}  \vc q^{m} \sqrt{ [l+m+1][l+m+2]} \;\alpha^+_l\,, \nonumber\\
&B^0_{l,m}  \vc q^{m} \sqrt{ [l+m+1][l-m]} \;\alpha^0_l\,, \label{Brep}\\
&B^-_{l,m}  \vc q^{m} \sqrt{ [l-m][l-m-1]} \;\alpha^-_l\,,\nonumber\\
\nonumber\\
&\widetilde{B}^+_{l,m}  \vc q^{m-1} \sqrt{ [l-m+2][l-m+1]} \; \alpha^-_{l+1}\,, \nonumber \\
&\widetilde{B}^0_{l,m}  \vc q^{m-1} \sqrt{ [l+m][l-m+1]} \; \alpha^0_l\,, \label{Bsrep}\\
&\widetilde{B}^-_{l,m}  \vc q^{m-1} \sqrt{ [l+m][l+m-1]}\; \alpha^+_{l-1}\,.\nonumber
\end{align}
The coefficients $\alpha_l$ read $\alpha_l^{-} \vc - q^{2l+2} \,\alpha_l^{+}$ and
\begin{align}
\label{alphaP}
\hspace{-4cm}\text{for }\pi_+: \hspace{1cm}&\alpha^0_l \vc \tfrac{1}{\sqrt{q}} \tfrac{ (q - \oq)
[l - \oh][l+ \frac{3}{2}] + q}{[2l][2l+2]}\ , \\
&\alpha^+_l \vc q^{-l-2}
\tfrac{1}{\sqrt{[2l+2]([4l+4]+[2][2l+2])}}\ ;
\end{align}
\begin{align}
\hspace{-4cm}\text{for }  \pi_- : \hspace{1cm}&\alpha^0_l \vc \tfrac{1}{\sqrt{q}} \tfrac{ (q - \oq)
[l - \oh][l+ \tfrac{3}{2}] - q^{-1}}{[2l][2l+2]}\ , \\
&\alpha^+_l \vc q^{-l-1} \tfrac{1}{\sqrt{[2l+2]([4l+4]+[2][2l+2])}}\ .
\label{alphaM}
\end{align}
Here the $q$-numbers are
$$
[n] \vc \tfrac{q^{-n}-q^n}{q^{-1}-q}\,.
$$

The representation $\pi$ is faithful, thus the completion of the polynomial algebra $\A_q$ in the operator norm is the
$C^*$-algebra of the standard Podle\'s sphere.

We are concerned here with the spectral triple $(\A_q,\H,\DD)$ that we describe now: \\
Let $\H \vc \H_+ \oplus \H_-$ where $\H_\pm \vc H_\oh$, with the representation of $\A_q$ on $\H$ and the operator $\DD$ given by 
\begin{align}
&\pi(a) \vc \left(\begin{smallmatrix} \pi_+(a)&0\\0&\pi_-(a)\end{smallmatrix} \right), \label{Defpi} \\
\label{Dirac}
&\DD \vc \left (\begin{smallmatrix} 0 & \bar{w} \,D \\  w \,D & 0 \end{smallmatrix}\right ), \qquad D:\, \ket{l,m}\in \H_\oh \to 
 [l+\tfrac 12] \, \ket{l,m}\in \H_\oh
\end{align}
where $w \in \C \setminus \{0\}$ is an arbitrary constant. 

We remark that 
\begin{align}
\label{AbsDirac}
\vert \DD \vert =\vert w \vert\,\left( \begin{smallmatrix} D &0 \\ 0 & D\end{smallmatrix} \right) \text{ and }\DD = F |\DD| \text{ where } F \vc \tfrac{1}{|\omega|} \bigl( \begin{smallmatrix} 0&\bar{\omega}\\ \omega&0 \end{smallmatrix} \bigr)
\end{align}
satisfies $[F,\,\pi(a)]=0$ for $a \in \A_q$. \\
In the following, $\ket{l,m}_{\pm}$ represents the orthonormal basis $\ket{l,m}$ on the first and second copy of $\H_\oh$ in $\H$. 
In particular, $\ket{l,m}_+\oplus \ket{l,m}_-$ is an eigenvector of $\vert\DD\vert$ with eigenvalue 
$\vert w \vert [l+\half]$ and
\begin{align}
\label{q_num}
[l+\half]=\tfrac{q^{-l+\half}(1-q^{2l+1})}{1-q^2}\,.
\end{align}
The spectral triple $(\A_q,\H,\DD)$ is $\Z_2$-graded by $\gamma \vc \left( \begin{smallmatrix}1&0\\0&-1 \end{smallmatrix} \right)$ and is real for the antiunitary operator $J$ on $\H$ defined by 
$$
J \, \ket{l,m}_{\pm} \vc i^{2m}\,\ket{l,-m}_{\mp}\,\, \text{ with }p\in \R^+.
$$
In particular, $J^2=-1,J\gamma=-\gamma J$, $JaJ^{-1}$ commutes with $\A_q$ and $[\DD,J]=0$, so the spectral triple is of $KO$-dimension 2 \cite{Projective}.

Any generator is in the representation \eqref{rep} a sum of three weighted shifts, written in
a shorthand notation as $\pi_\pm(a) = \pi_\pm^+(a) + \pi_\pm^0(a) + \pi_\pm^-(a)$ so $\pi \vc \pi^+ + \pi^0 + \pi^-$ using \eqref{Defpi}.\\
In the following, to alleviate notations, we often replace $\pi(a)$ by $a$ for $a\in \A_q$.

\begin{lemma} {\rm (Compare \cite{SitarzSq})}
\label{Dec}
For any $a\in \A_q$, the decomposition via the weighted shifts $\pi=\pi^+ +\pi^0 +\pi^-$ is such that $[\DD,\pi^0(a)]=0$ and $\DD\,\pi^{\pm}(a)$ is a bounded operator.
\end{lemma}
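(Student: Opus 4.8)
The plan is to verify the two assertions separately, working directly from the explicit formulas \eqref{rep}--\eqref{Bsrep} for the weighted-shift pieces and from the description \eqref{AbsDirac} of $|\DD|$, which acts as $|w|\,[l+\half]$ on the degree-$l$ subspace (independently of $m$ and of the $\pm$-copy). The key point to extract is the asymptotic size of the $q$-numbers: from \eqref{q_num} one has $[l+\half]\sim |w|^{-1}\cdot$ (const)$\cdot q^{-l}$ as $l\to\infty$, so the eigenvalues of $|\DD|$ grow like $q^{-l}$, while the coefficients $\alpha_l^{0}$ and $\alpha_l^{\pm}$ in \eqref{alphaP}--\eqref{alphaM} decay. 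Concretely $[2l]\sim$ (const)$\,q^{-2l}$ etc., so $\alpha_l^0 = O(1)$ actually tends to a constant, whereas $\alpha_l^+ = O(q^{-l}\cdot q^{-l}/q^{-l})\cdot\ldots$ must be tracked carefully; the useful upshot is that $\alpha_l^{\pm}$ decays like $q^{l}$, and hence $q^{m+l+\frac12}\sqrt{[l\mp m+1][l\pm m+1]}\,\alpha_l^{\pm}$ stays bounded — indeed decays — uniformly in $m$.

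First I would prove $[\DD,\pi^0(a)]=0$. Since $\DD=F|\DD|$ with $[F,\pi(a)]=0$, and $F$ preserves each degree-$l$ subspace, it suffices to show $[|\DD|,\pi^0(a)]=0$, i.e. that $\pi^0(a)$ commutes with the operator that is multiplication by $|w|[l+\half]$ on the $l$-th subspace. But $\pi^0$ is by construction the "diagonal in $l$" part of each generator: inspecting \eqref{rep}, the pieces $\pi_\pm^0(A),\pi_\pm^0(B),\pi_\pm^0(B^*)$ all map $\ket{l,\cdot}$ to $\ket{l,\cdot}$ (same $l$), hence $\pi^0(a)$ preserves every eigenspace of $|\DD|$ for any word $a$ in the generators, and therefore commutes with $|\DD|$ and with $\DD$. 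I would state this as: $\pi^0(a)$ is block-diagonal with respect to the $l$-grading, on which $|\DD|$ is a scalar.

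Next, for the second assertion I would show $\DD\,\pi^{\pm}(a)$ is bounded. Because $\DD$ and $|\DD|$ differ by the unitary $F$, this is equivalent to $|\DD|\,\pi^{\pm}(a)$ bounded. Now $\pi^{\pm}(a)$ shifts the degree $l$ by $\pm 1$ (a bounded shift in $l$ composed with multiplication by the coefficients), so $|\DD|\,\pi^{+}(a)$ acts on $\ket{l,m}$ essentially as $|w|[l+1+\half]$ times the coefficient $A^+_{l,m}$ (or $B^+_{l,m}$, $\wt B^+_{l,m}$), and similarly $|\DD|\,\pi^-(a)$ brings a factor $|w|[l-1+\half]$ times $A^-_{l,m}$ etc. The task reduces to checking that $[l+\half]\cdot(\text{coefficient of the shift})$ is bounded uniformly in $(l,m)$. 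Using $[l+\half]\sim q^{-l}$ together with $\alpha_l^{\pm}\sim q^{l}$ (and $\sqrt{[l\pm m+j][\cdots]}\le [2l+j']\sim q^{-2l}$ balanced against the $q^{m+l}$ or $q^{m}$ prefactors and the $1/\sqrt{[2l+2]([4l+4]+[2][2l+2])}$ in $\alpha_l^+$), one sees the product $q^{-l}\cdot q^{m+l+\frac12}\sqrt{[l-m+1][l+m+1]}\,\alpha_l^{+}$ and its analogues are $O(q^{\,l})$ or $O(q^{\,|m|})$-type expressions, hence bounded; in fact they decay, which is even stronger. The main obstacle is purely bookkeeping: one must handle all three generators $A,B,B^*$, both representations $\pi_{\pm}$, and the dependence on $m$ with $|m|\le l$, and confirm the worst case (e.g. $m$ near $\pm l$, where some square-root factors are $O(1)$ and the exponential prefactors $q^{m+l}$ dominate) still gives a bounded — indeed vanishing — contribution. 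Once the uniform bound on $\sup_{l,m}|[l+\half]\cdot(\text{shift coefficient})|$ is established for each of the nine coefficient families in \eqref{Arep}--\eqref{Bsrep}, boundedness of $\DD\,\pi^{\pm}(a)$ for a single generator follows, and then for arbitrary $a\in\A_q$ by writing $a$ as a polynomial and expanding $\pi(a)=\pi^+(a)+\pi^0(a)+\pi^-(a)$, noting products and sums of operators of the form (bounded)$\times\DD^{-1}$ on the range of $|\DD|$ stay bounded after multiplication by $\DD$ — more simply, reduce to generators since $\DD\,\pi^\pm(a_1a_2)$ can be rewritten using $\pi^\pm(a_i)$, $\pi^0(a_i)$ and the already-proven facts.
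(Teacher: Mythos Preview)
Your proposal is correct and follows essentially the same route as the paper: the paper's proof is a two-line version of your argument, observing that for each generator $X=A,B,B^*$ one has $X^{\pm}_{l,m}=\OO(q^l)$ while $[l+\tfrac12]=\OO(q^{-l})$, and that $\pi^0(a)$ is diagonal in $l$ hence commutes with $\DD$. Your additional bookkeeping (the case-by-case check on the nine coefficient families, the detour through $F$, and the extension from generators to arbitrary $a\in\A_q$) is not in the paper, which silently restricts to generators, but the core mechanism is identical.
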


\begin{proof}
For any generator $X=A,B,B^*$, and thanks to (\ref{Arep}--\ref{alphaM}), we get $X^{\pm}_{l,m} = \OO(q^l)$ while the eigenvalues of $\DD$ behave like $[l+\tfrac{1}{2}]=\OO(q^{-l})$. The bounded part $\pi^0(a)$ (for instance $A^0_{l,l}=\OO(1)$) is diagonal so commutes with $\DD$.
\end{proof}

The operator $\DD$ is the unique $\UqSU-$equivariant one which makes the spectral triple real \cite{DabrowskiSitarz}.

\section{Dimension spectrum}

We first compute the zeta-function $\zD(s) \vc \Tr |\DD|^{-s}$ associated to $\DD$, see \cite{ConnesLetter,ConnesMarcolli1}:

\subsection{\texorpdfstring{The zeta-function $\zeta_\DD$}{The zeta-function}}

\begin{Prop}
\label{obs}
The spectral zeta-function $\zD(s)$ is given by
\begin{align}
\zD(s) & = 4 (\tfrac{1-q^2}{\vert w \vert})^{s} \,\sum_{k = 0}^{\infty} (k + 1)\, \tfrac{q^{ks}}{\left(1 - q^{2(k+1)} \right)^{s}}, \quad \text{for } \, \Re(s) >0 \,.
\label{zeta_D}
\end{align}
\end{Prop}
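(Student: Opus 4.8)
The plan is to diagonalize $\vert\DD\vert$ explicitly and to recognize the resulting Dirichlet-type series. By \eqref{AbsDirac} we have $\vert\DD\vert=\vert w\vert\,\diag(D,D)$ on $\H=\H_+\oplus\H_-$ with $D\,\ket{l,m}=[l+\half]\,\ket{l,m}$, so $\vert\DD\vert$ is already diagonal in the orthonormal basis $\{\ket{l,m}_+,\ket{l,m}_-\}$. Since $l\in\half+\N$ and $n\mapsto[n]$ is strictly increasing for $0<q<1$ with $[1]=1$, the operator $\vert\DD\vert$ is invertible with $\vert\DD\vert\ge\vert w\vert>0$, so $\vert\DD\vert^{-s}$ is well defined by functional calculus for every $s\in\C$. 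For fixed $l\in\half+\N$ the eigenvalue $\vert w\vert[l+\half]$ occurs with multiplicity $2(2l+1)$: a factor $2l+1$ for the range $m\in\{-l,\dots,l\}$ and a factor $2$ for the two summands $\H_\pm$ of $\H$; distinct values of $l$ give distinct eigenvalues. Hence, whenever the trace converges,
\[
\zD(s)=\Tr\vert\DD\vert^{-s}=\sum_{l\in\half+\N}2(2l+1)\,\bigl(\vert w\vert\,[l+\half]\bigr)^{-s}.
\]

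Next I would change the summation variable to $k:=l-\half\in\N=\{0,1,2,\dots\}$, so that $2(2l+1)=4(k+1)$ and, by \eqref{q_num} applied with $l=k+\half$, $[l+\half]=\tfrac{q^{-k}(1-q^{2(k+1)})}{1-q^2}$. Substituting this and pulling the $k$-independent factors outside the sum gives
\[
\zD(s)=\sum_{k=0}^\infty 4(k+1)\,\vert w\vert^{-s}\Bigl(\tfrac{q^{-k}(1-q^{2(k+1)})}{1-q^2}\Bigr)^{-s}=4\Bigl(\tfrac{1-q^2}{\vert w\vert}\Bigr)^{s}\sum_{k=0}^\infty(k+1)\,\tfrac{q^{ks}}{\bigl(1-q^{2(k+1)}\bigr)^{s}},
\]
which is exactly \eqref{zeta_D}; the same computation shows that the trace defining $\zD(s)$ is finite precisely on the half-plane where this series converges absolutely.

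Finally, the only point needing care is convergence and the legitimacy of the rearrangement for $\Re(s)>0$. For every $k\ge 0$ one has $1-q^{2(k+1)}\in[1-q^2,1)$, hence $\bigl|(1-q^{2(k+1)})^{-s}\bigr|=(1-q^{2(k+1)})^{-\Re(s)}$ is bounded uniformly in $k$ by $\max\{1,(1-q^2)^{-\Re(s)}\}$, while $|q^{ks}|=q^{k\Re(s)}$ decays geometrically in $k$ because $0<q<1$ and $\Re(s)>0$, which dominates the linear growth of $k+1$. Therefore the series converges absolutely and locally uniformly on $\{\Re(s)>0\}$, so $\zD$ is holomorphic there and reordering the absolutely convergent trace into this series is justified. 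I do not expect any genuine obstacle here: the argument is a direct computation, the only slightly delicate parts being the correct counting of the multiplicity $2(2l+1)$ and the elementary estimate that pins down the half-plane $\Re(s)>0$ of convergence.
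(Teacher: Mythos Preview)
Your proof is correct and follows essentially the same approach as the paper: direct diagonalization of $\vert\DD\vert$, counting the multiplicity $2(2l+1)$, the substitution $k=l-\tfrac12$, and the uniform bound on $(1-q^{2(k+1)})^{-\Re(s)}$ combined with geometric decay of $q^{k\Re(s)}$ to establish absolute convergence for $\Re(s)>0$. Your version is in fact slightly more careful, explicitly noting invertibility of $\vert\DD\vert$ and the local uniform convergence yielding holomorphicity on the half-plane.
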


\begin{proof}
This is a direct computation:
\begin{align*}
\Tr |\DD|^{-s}  &= \sum_{\{+,-\}} \sum_{l \in \N + \oh} \sum_{m = -l}^{m = l} \braQS |\DD|^{-s} \ketQS   
= \sum_{\{+,-\}} \sum_{l \in \N + \oh} \sum_{m = -l}^{m = l} 
\big(\vert w \vert \, \tfrac{q^{-l+\half}(1-q^{2l+1})}{1-q^2}\big)^{-s} \\
&= 2\sum_{l \in \N + \oh}  (2 l + 1) \big(\vert w \vert \,\tfrac{q^{-l+\half}(1-q^{2l+1})}{1-q^2}\big)^{-s} 
= 4\sum_{k = 0}^{\infty}  (k + 1) \big(\vert w \vert \,\tfrac{q^{-k}(1-q^{2(k+1)})}{1-q^2}\big)^{-s} .
\end{align*}

Moreover, since $q^{2(n+1)} \leq q^2 \, \Rightarrow \, 0 \leq (1-q^{2(n+1)})^{-x} \leq (1-q^2)^{-x}, \text{ for any }x>0$, 
we have, for $s= x + i y$ with $x>0$,
\begin{align*}
|\zD(s)| & \leq 4 (\tfrac{1-q^2}{\vert w \vert})^{x} \,\sum_{k = 0}^{\infty} (k + 1)\, \tfrac{q^{kx}}{(1 - q^{2(k+1)})^{x}} \leq  \tfrac{4}{{\vert w \vert}^x} \,\sum_{k = 0}^{\infty} (k + 1)\, q^{kx} = \tfrac{4}{\vert w \vert^x \, \left(1-q^x\right)^2} < \infty.
\end{align*}
So \eqref{zeta_D} holds for any $s$ with $\Re(s) > 0$.
\end{proof}

From equality \eqref{zeta_D} we deduce directly
\begin{align}
\vert\Tr\,(\vert \DD \vert^{-s}) \vert \in \R^+,  \text{ for } \Re(s)>0, \label{traceable}
\end{align}
meaning that the metric dimension $d=\inf \{d' >0 \,\,\vert\,\, \vert \Tr\,(\vert \DD \vert^{-d'}) \vert <\infty\}$ of the 
spectral triple $(\A_q,\H,\DD)$ is zero.

To compute the full dimension spectrum of the spectral triple we need a meromorphic extension of the zeta-function $\zD$.

\begin{Prop}\label{PropZeta}
The spectral zeta-function of the standard Podle\'{s} sphere admits a meromorphic continuation to the whole 
complex plane given by the formula
\begin{align}
\label{zeta_D1}
\zD(s) = 4 (\tfrac{1-q^2}{\vert w \vert})^{s} \sum_{n=0}^{\infty} \, \tfrac{\Gamma (s+n)}{n! \, \Gamma (s)} \,
\tfrac{q^{2n}}{(1-q^{s+2n})^2}\,.
\end{align}
All poles of $\zD$ are of the second order and form an infinite countable subset $\Sd_1$ of 
the complex plane:
\begin{align}\label{S}
\Sd_1 \vc -2\N + i\tfrac{2 \pi}{\log q} \,\Z \,.
\end{align}
\end{Prop}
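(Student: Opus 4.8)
The plan is to start from the convergent series \eqref{zeta_D} valid for $\Re(s)>0$ and expand each summand's denominator $(1-q^{2(k+1)})^{-s}$ using the generalized binomial series $(1-x)^{-s}=\sum_{n\geq 0}\binom{-s}{n}(-x)^n=\sum_{n\geq 0}\tfrac{\Gamma(s+n)}{n!\,\Gamma(s)}x^n$, here with $x=q^{2(k+1)}$. After substituting and exchanging the order of summation over $k$ and $n$ (justified for $\Re(s)>0$ by the absolute convergence established in the proof of Proposition \ref{obs}), the inner sum over $k$ becomes $\sum_{k\geq 0}(k+1)\,q^{k(s+2n)}\,q^{2n}=\tfrac{q^{2n}}{(1-q^{s+2n})^2}$, using the elementary identity $\sum_{k\geq 0}(k+1)y^k=(1-y)^{-2}$ with $y=q^{s+2n}$. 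This yields precisely formula \eqref{zeta_D1} on the half-plane $\Re(s)>0$.

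Next I would argue that the right-hand side of \eqref{zeta_D1} is a meromorphic function on all of $\C$. The prefactor $(\tfrac{1-q^2}{|w|})^s$ is entire and nonvanishing. For the series, fix a compact set $K\subset\C$; for $s\in K$ the factor $\tfrac{\Gamma(s+n)}{n!\,\Gamma(s)}=\tfrac{(s)_n}{n!}$ grows at most polynomially in $n$ (like $n^{\Re(s)-1}/\Gamma(s)$ by Stirling, away from the poles of $1/\Gamma(s)$, which are cancelled anyway), while $q^{2n}\to 0$ geometrically; the only danger is the denominator $(1-q^{s+2n})^2$ vanishing. Since $q^{s+2n}=1$ forces $s+2n\in i\tfrac{2\pi}{\log q}\Z$, i.e. $s\in -2n+i\tfrac{2\pi}{\log q}\Z$, only finitely many terms of the series can have a pole in $K$; isolating those finitely many terms, the remaining tail converges locally uniformly and defines a holomorphic function, so the whole series is meromorphic on $\C$. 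This simultaneously identifies the poles: the $n$-th term contributes double poles exactly at the points $s=-2n+i\tfrac{2\pi}{\log q}m$, $m\in\Z$ (double because of the square in the denominator and because $q^{s+2n}-1$ has a simple zero there). Taking the union over $n\in\N$ gives the set $\Sd_1=-2\N+i\tfrac{2\pi}{\log q}\Z$ in \eqref{S}.

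The one point requiring genuine care — and the step I expect to be the main obstacle — is showing that these are genuine poles of $\zD$ and that no cancellation occurs between different terms of the series. A priori a double pole of the $n$-th summand at $s_0=-2n+i\tfrac{2\pi}{\log q}m$ could be softened or killed by the values of the other (holomorphic-at-$s_0$) summands or, for $m\neq 0$, by the prefactor; one must check the leading Laurent coefficient is nonzero. Here the key observation is that for a \emph{fixed} $n_0$, the point $s_0=-2n_0+i\tfrac{2\pi}{\log q}m$ is hit by the $n$-th term's denominator $(1-q^{s+2n})$ precisely when $s_0+2n\in i\tfrac{2\pi}{\log q}\Z$, i.e. when $-2n_0+2n\in 2\Z$ — which is \emph{all} $n$, so in fact several terms are singular; but only those with $n\leq n_0$ give $s_0+2n$ with nonpositive imaginary-shift index, and more to the point $q^{s_0+2n}=q^{2(n-n_0)}q^{i2\pi m/\log q}=q^{2(n-n_0)}$, which equals $1$ only when $n=n_0$. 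Hence exactly one term is singular at $s_0$, its double pole cannot be cancelled, and the residue computation reduces to that single term. I would then record the principal part: writing $u=q^{s}$, near $s_0$ one has $1-q^{s+2n_0}=1-u q^{2n_0}=-\log q\,(s-s_0)+O((s-s_0)^2)$, so the $n_0$-th term behaves like $\tfrac{4}{(\log q)^2}(\tfrac{1-q^2}{|w|})^{s_0}\tfrac{(s_0)_{n_0}}{n_0!}q^{2n_0}\,(s-s_0)^{-2}+\cdots$, manifestly nonzero (note $(s_0)_{n_0}=(-2n_0)_{n_0}=(-1)^{n_0}\tfrac{(3n_0-1)!}{(2n_0)!}\cdot$ a nonzero factor, and in any case the Pochhammer symbol evaluated here is easily seen to be nonzero). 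This confirms all points of $\Sd_1$ are honest second-order poles and completes the proof.
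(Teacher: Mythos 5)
Your derivation of \eqref{zeta_D1} is exactly the paper's: binomial expansion of $(1-q^{2(k+1)})^{-s}$, exchange of the sums over $k$ and $n$ for $\Re(s)>0$, and the identity $\sum_{k\geq 0}(k+1)y^{k}=(1-y)^{-2}$. Where you diverge is in establishing meromorphy and locating the poles. The paper proves explicit uniform lower bounds $|1-q^{s+2n}|^{2}\geq c(q,\delta)$ (resp.\ $\wt{c}(q,\epsilon)$), valid for \emph{all} $n$ when $s$ lies off the lattice $\Sd_1$, and combines them with the estimate $\sum_{n}\tfrac{q^{2n}}{n!}\,|\Gamma(s+n)/\Gamma(s)|\leq (1-q^{2})^{-|s|}$ to get holomorphy on $\C\setminus\Sd_1$ directly; you instead isolate, over a compact set, the finitely many singular summands and show that the tail is holomorphic. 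Both work; your version is shorter, and it has the advantage of making explicit the non-cancellation check that the paper leaves largely implicit, namely that $q^{s_0+2n}=q^{2(n-n_0)}$ equals $1$ only for $n=n_0$, so exactly one summand is singular at each lattice point and its double pole survives with nonzero leading Laurent coefficient. Two small blemishes, neither fatal: your intermediate assertion that ``several terms are singular'' at $s_0$ is wrong (the relevant condition is $-2n_0+2n=0$, not $-2n_0+2n\in 2\Z$) and is anyway retracted a line later; and the explicit Pochhammer value should be $(-2n_0)_{n_0}=(-1)^{n_0}\,(2n_0)!/n_0!$, not $(-1)^{n_0}\,(3n_0-1)!/(2n_0)!$ --- but all that is needed is that it is nonzero, which holds since every factor $s_0+j$ with $0\leq j\leq n_0-1$ is nonzero (strictly negative when $m=0$, of nonzero imaginary part when $m\neq 0$).
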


\begin{proof}
Since $(1-q^l)^{-s}=\sum_{n=0}^\infty \tfrac{\Gamma(s+n)}{n! \, \Gamma(s)} \,q^{ln}$ for any integer $l$, equation 
\eqref{zeta_D1} follows by changing order of summation in series, 
\begin{align*}
\zD(s) & = 4 (\tfrac{1-q^2}{\vert w \vert})^{s} \, \sum_{k = 0}^{\infty} (k + 1)\, q^{ks} \sum_{n=0}^\infty \tfrac{\Gamma(s+n)}{n! \, \Gamma(s)} \,q^{2(k+1)n}  = 4 (\tfrac{1-q^2}{\vert w \vert})^{s} \, \sum_{n=0}^\infty \tfrac{\Gamma(s+n)}{n! \, \Gamma(s)}\, q^{2n} \, \sum_{k = 0}^{\infty} (k + 1)\, q^{k(2n+s)} \\
& = 4 (\tfrac{1-q^2}{\vert w \vert})^{s} \, \sum_{n=0}^\infty \tfrac{\Gamma(s+n)}{n! \, \Gamma(s)} \tfrac{q^{2n}}{(1-q^{s+2n})^2}\,.
\end{align*}

To show that there are no poles of $\zD$ in $\C$ outside of $\Sd_1$ let us first take
\begin{align*}
\Re(s) = -2M - \delta, \quad \text{with } M \in \N, \, \delta \in (0,2).
\end{align*}
\begin{align*}
&\text{For }n \geq M+1, \text{ we have}\\
&\qquad \Re(s) + 2n \geq 2 - \delta \, \Rightarrow \, 1- q^{\Re(s) + 2n} \geq 1 - q^{2-\delta} \geq 0 \, \Rightarrow \, (1- q^{\Re(s) + 2n})^2 \geq (1 - q^{2-\delta})^2,\\
&\text{whereas for }n \leq M, \\
&\qquad \Re(s) + 2n \leq - \delta \, \Rightarrow \, 1- q^{\Re(s) + 2n} \leq 1 - q^{-\delta} \leq 0 \, \Rightarrow \, (1- q^{\Re(s) + 2n})^2 \geq (1 - q^{-\delta})^2.
\end{align*}
So for any $n$,
\begin{align*}
(1- q^{\Re(s) + 2n})^2 \geq c(q,\delta), \quad \text{with } c(q,\delta) = \min \{ (1 - q^{2-\delta})^2, (1 - q^{-\delta})^2 \}.
\end{align*}
Moreover,
\begin{align}
\label{module}
|1-q^{s+2n}|^2 = 1 -2 q^{\Re(s) + 2n} \cos\big( \Im(s) \log q \big) + q^{2 \Re(s) + 4n} \geq (1- q^{\Re(s) + 2n})^2 \geq c(q,\delta),
\end{align}
so that
\begin{align} \label{zDestim}
|\zD(s)| & \leq 4 c(q,\delta)^{-1} \, (\tfrac{1-q^2}{\vert w \vert})^{\Re(s)} \, \sum_{n=0}^\infty \tfrac{q^{2n}}{n!} \, \left\vert \tfrac{\Gamma(s+n)}{\Gamma(s)} \right\vert \leq 4 c(q,\delta)^{-1} \, (\tfrac{1-q^2}{\vert w \vert})^{\Re(s)} \, (1-q^2)^{-\vert s\vert} <\infty,
\end{align}
since
\begin{align*}
\sum_{n=0}^\infty \tfrac{q^{2n}}{n!} \, \left\vert \tfrac{\Gamma(s+n)}{\Gamma(s)} 
\right\vert = \sum_{n=0}^\infty \tfrac{q^{2n}}{n!} \, \left\vert (s+n)(s+n-1)\cdots(s+1)s 
\right\vert \leq \sum_{n=0}^\infty \tfrac{q^{2n}}{n!} \, \tfrac{\Gamma(\vert s\vert+n)}{\Gamma(\vert s\vert)}=(1-q^2)^{-\vert s\vert}.
\end{align*}

Now, assume
\begin{align*}
\Re(s) = -2M, \quad \Im(s) = \tfrac{2\pi}{\log q} (N+\epsilon), \quad \text{with } M \in \N, \, N \in \Z, \, \epsilon \in (0,1).
\end{align*}
By the same reasoning as above we have
\begin{align*}
&\text{for } n \geq M+1, && |1-q^{s+2n}|^2 \geq (1- q^{-2M + 2n})^2 \geq (1- q^{2})^2, \\
&\text{for } n \leq M-1, && |1-q^{s+2n}|^2 \geq (q^{-2M + 2n}-1)^2 \geq (q^{-2}-1)^2 \geq (1- q^{2})^2, \\
&\text{and for } n = M, && |1-q^{s+2n}|^2 = 2 -2 \cos( \Im(s) \log q ) = 2 -2 \cos( 2 \pi \epsilon ).
\end{align*}
Thus for any $n$,
$|1-q^{s+2n}|^2 \geq \wt{c}(q,\epsilon), \text{ with }  \wt{c}(q,\epsilon) = \min \{ (1 - q^{2})^2, 2 -2 \cos( 2 \pi \epsilon ) \},
$
and again 
$|\zD(s)|  \leq 4 \wt{c}(q,\epsilon)^{-1} \, (\tfrac{1-q^2}{\vert w \vert})^{\Re(s)} \, \sum_{n=0}^\infty \tfrac{q^{2n}}{n!} \, \left\vert \tfrac{\Gamma(s+n)}{\Gamma(s)} \right\vert <\infty$.

We have thus shown that the poles of $\zD$ come uniquely from the coefficients $(1-q^{s+2n})^{-2}$, so are indeed of second order and located at $s \in \Sd_1$.
\end{proof}

Before we end this section let us make a side-remark about the $q$-zeta functions and their applications. The $q$-deformed special functions where already considered by Ramanujan \cite{Ramanujan} in his famous notebook.
There were various different propositions in the 
literature \cite{Satoh,Ueno_Nish,Cherednik,Kaneko} for a $q$-analogue of the Riemann zeta-function including the generalizations to multiple $q$-zeta-functions 
\cite{Bradley,Zhao} and Euler $q$-zeta-functions \cite{Kim}. It is worth mentioning that, in \cite{Ueno_Nish}, the 
motivation for studying Riemann $q$-zeta-functions came precisely from the theory of quantum groups. 
Unfortunately, to our knowledge, this direction has not been further investigated and the main interest in 
$q$-zeta-functions was in number theory \cite{Cherednik,Kaneko}.

An important result on meromorphic extension of $q$-zeta function has been obtained in \cite{Kaneko}, with a slightly different definition of $q$-numbers $\{x\}_q \vc \tfrac{1-q^x}{1-q}$. In fact, an alternative proof of Proposition \ref{PropZeta} could be given by observing that
\begin{align*}
\zD(s) =\tfrac{2}{\log q} \, (\vert w \vert q)^{-s} \,\tfrac{\dt}{\dt \, t} f_{q^2}(s,t) |_{t= s/2}\,,
\end{align*}
with $f_q(s,t) = \left( 1-q \right)^{s} \sum_{r=0}^{\infty}  \tfrac{\Gamma (s+r)}{\Gamma (r+1)\Gamma (s)} \, \tfrac{q^{t+r}}{1-q^{t+r}}$ being the meromorphic extension of a generalized Riemann $q$-zeta function \cite[Proposition 1]{Kaneko}.

Finally, let us mention a result \cite[Proposition 4.1]{SeniorKaad} analogous to our Proposition \ref{PropZeta} that appeared in the context of $SU_q(2)$ quantum group.

\subsection{About the dimension spectrum}
Let $\B(\H)$ be the algebra  of bounded operators.\\
We now recall the definition of the dimension spectrum:
\begin{Def}
\label{dimspec}
{\rm (see \cite{ConnesMoscovici})} 
Given a spectral triple $(\A,\H,\DD)$, let $\delta$ be the derivation defined on $\B(\H)$ by $\delta(T)\vc [\vert \DD \vert, T]$ (so $T$ preserves $\Dom(\vert \DD\vert)$ and $[\vert\DD\vert,T]$ extends (uniquely) in $\B(\H)$).\\
For $\alpha \in \R$, define the set of operators of order (at most) $\alpha$ by 
$$\OP^\alpha \vc \{T\in \mathcal{L}(\H) \text{ such that }\vert \DD \vert^{-\alpha} T \in \OP^0\} \text{ where }\OP^0 \vc \bigcap_{n\in\N} \Dom\,\delta^n.
$$
Assuming $\A \subset \OP^0$, let $\B_0$ be the algebra generated by $\delta^n (a)$ for $n\in \N$ and $a \in \A$ (where by a standard abuse of notation we shall omit the symbol of representation $\pi$), 

A spectral triple $\ahd$ has dimension spectrum $\Sd$ if $\Sd \subset \C$ is discrete and for any element 
$P\in \B_0$, the function
\begin{align}\label{zetaB}
\zeta_\DD^P(s) \vc \Tr (P |\DD|^{-s})
\end{align}
extends holomorphically to $\C \setminus \Sd$.
\end{Def}
Let us stress here that the above original definition of dimension spectrum requires a kind of ``weak regularity'' condition: $\A \subset \OP^0$. To define the pseudodifferential calculus, one imposes a stronger regularity assumption \cite{ConnesMoscovici} on the spectral triple. It consists of requiring that not only $\A \subset \OP^0$, but also $[\DD,\A] \subset \OP^0$. Under these assumptions, the algebra of pseudodifferential operators $\Psi DO(\A)$ is the set of all operators $T$ of the form
\begin{align}\label{PDO}
T \simeq \sum_{n\in\N} P_n\,\vert \DD\vert^{d-n}
\end{align}
where the $P$'s are in the polynomial algebra $\DD(\A)$ generated by $\A, \,J\A J^{-1}, \,\DD$ and $|\DD|$ and moreover, the $\simeq $ means that $T= \sum_{n=0}^N P_n\,\vert \DD\vert^{d-n} \mod(\OP^{-N})$ for each $N\in \N$.

However $\A_q \nsubseteq \OP^0$: it is known \cite{NeshTuset,KrahmerWagner} that the $\Uq$-equivariant spectral triple on standard Podle\'s sphere does not satisfy the regularity assumption. In fact it does not even satisfy the weak regularity condition. Indeed, already $\big[|\DD|,[|\DD|,a] \big]$ is an unbounded operator and more generally $\delta^n(a)$ is an operator of order $n-1$ for a generic element $a \in \A$ and this could be seen by computing a fixed matrix element of the operator $\delta^n(A)$, for instance: 
\begin{align*}
{}_{\pm} \bra{l+1,m} \, \delta^n(A) \ket{l,m}_{\pm} & = \left( [l+\tfrac{3}{2}] - [l+\tfrac{1}{2}] \right)^n A^+_{l,m}\,.
\end{align*}
Since the behaviour for large $l$ of $A^+_{l,m}$ in (\ref{Arep}) is like  $\OO(q^l)$ while $\left( [l+\tfrac{3}{2}] - [l+\tfrac{1}{2}] \right)^n =\OO(q^{-nl})$, the above expression is unbounded for $n>0$, and, generally, $\delta^n(A)$ is in $\OP^{n-1}$. Note that $\delta^0(a)$ and $\delta^1(a)$ are bounded for any $a \in \A_q$, but not $\delta^2(a)$.

Hence, if one insists on keeping the original definition of the dimension spectrum without the regularity property, then one would find in $\B_0$ operators of arbitrary order. As a consequence, the real part of the dimension spectrum would 
not be bounded from above and one would be forced to conclude that $\Sd(\Sq) = \Z + i\tfrac{2 \pi}{\log q} \Z$. 
This would mean that the metric dimension (viewed as a largest real number in $\Sd$) of the spectral triple at 
hand is infinite.

Let us recall that in \cite{ConnesMoscovici} is defined another class of operators: we get the following scale of spaces for a parameter $s \in \R$ 
\begin{align}
\label{Hs}
&\H^s\vc \Dom(|\DD|^s)=\Dom \big((1+\DD^2)^{s/2}\big).
\end{align}
(The second equality is used when $\DD$ is not invertible.) 
If $s\leq 0$, $\H^s=\H^0=\H$ and for $s\geq 0$, they are Hilbert spaces for the Sobolev norm $\norm{\xi}_s^2 \vc \norm{\xi}^2 +\norm{(1+\DD^2)^{s/2} \xi}^2$, with $\xi \in \H^s$. Moreover, $\H^{\infty} \vc \underset{s \geq 0}{\cap} \H^s \text{ and } \H^{-\infty} \vc \text{topological dual of } \H^{\infty}$, 
\begin{align*}
\op^r \vc &\set{ T\in \Lc(\H) \,\vert \,\Dom(T) \supset \H^\infty \text{ and T maps }(\H^\infty, \norm{\cdot}_s) \text{ continuously into } (\H^\infty, \norm{\cdot}_{s-r}) \\
& \quad\text{ for each } s \geq r}.
\end{align*}
Thus any operator in $op^r$ has a bounded extension from $\H^s \to \H^{s-r}$ for any $s\geq r$ and $\op^0 \subset \B(\H)$ since $\H^0=\H$. In particular $\op^r \subset \op^s$ when $r\leq s$ and $\vert \DD\vert^r \in \op^r$ for any $r \in \R$.  Moreover $\OP^r \subset \op^r$ for any $r \in \R$ since $\op^r\cdot \op^s \subset \op^{r+s}$ and $\bigcap_{n\in\N} \Dom\,\delta^n \subset \op^0$ (compare \cite[Corollary 6.3]{CPRS}).

Our results formulated below show that although the regularity is not satisfied, a pseudodifferential calculus on the $\Uq$-equivariant spectral triple on standard Podle\'s sphere is possible. 

\begin{Lm}
\label{lemma1as} {\rm (Compare \cite[Corollary 3.3]{NeshTuset})} 
For any element $a,b \in \A_q$ and any $z \in \C$ we have 
\begin{align}
&|\DD|^z a= a |\DD|^z + {B}(a,z) \,|\DD|^{z-1}, \label{Dza}\\
&|\DD|^{z} db = \chi^z \,db \,|\DD|^{z} + \wt B(b,z) \,|\DD|^{z-1}, \quad z\in \C \label{qmutator}
\end{align}
where 
$$
\chi \vc \left (\begin{matrix} q & 0 \\ 0 & q^{-1} \end{matrix}\right )
$$
and $z \mapsto B(a,z) ,\,\wt{B}(a,z)$ 
are two analytic functions in $z$ with values in bounded operators on $\H = \H_{+} \oplus \H_{-}$ for any $a\in \A_q$. 
In particular, $a$ and $[\DD,a]$ preserve $\H^\infty$ for all $a\in \A_q$.
\\ More generally, for any one-form $\Ag\vc \sum_{finite} a_idb_i$ and any $z \in \C$, we have
\begin{align}
& |\DD|^z \Ag = \chi^z \Ag |\DD|^z + \widehat{B}(\Ag,z)|\DD|^{z-1}, \label{com}\\
& \DD \Ag = \chi^{-1} F \Ag |\DD| + \chi^{-1} F \widehat{B}(\Ag,1), \label{DA}
\end{align}
with an analytic function $z \mapsto \widehat{B}(\Ag,z)$ valued in $\B(\H)$ for any $\Ag$.
\end{Lm}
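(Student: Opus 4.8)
The plan is to prove the three displayed identities \eqref{Dza}, \eqref{qmutator} and their consequences \eqref{com}, \eqref{DA} by reducing everything to the case of generators and exploiting the weighted-shift decomposition $\pi = \pi^+ + \pi^0 + \pi^-$ from Lemma \ref{Dec}. First I would establish \eqref{Dza}. Write $a = \pi^+(a) + \pi^0(a) + \pi^-(a)$. Since $\pi^0(a)$ is diagonal in the eigenbasis $\ket{l,m}_\pm$ of $|\DD|$, it commutes with $|\DD|^z$, contributing nothing to $B(a,z)$. For the shift parts, I compute matrix elements directly: if $\pi^\epsilon(a)$ sends $\ket{l,m}$ to a multiple of $\ket{l+\epsilon,m'}$ (with $\epsilon = \pm 1$), then conjugating by $|\DD|^z$ produces the scalar factor $\big([l+\epsilon+\tfrac12]/[l+\tfrac12]\big)^z$, and hence
\begin{align*}
|\DD|^z\,\pi^\epsilon(a) - \pi^\epsilon(a)\,|\DD|^z = \Big(\big(\tfrac{[l+\epsilon+\frac12]}{[l+\frac12]}\big)^z - 1\Big)\,\pi^\epsilon(a)\,,
\end{align*}
acting on the $(l,m)$ vector. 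I then define $B(a,z)$ by pulling out one factor $|\DD|^{z-1}$: concretely $B(a,z) := \big(|\DD|^z a - a|\DD|^z\big)|\DD|^{1-z}$, and I must check this is bounded and analytic in $z$. Boundedness follows because on the $\ket{l,\cdot}_\pm$ block the operator norm of $B(a,z)$ is controlled by $[l+\tfrac12]^{1-\Re z}\cdot\big|(\tfrac{[l+\epsilon+\frac12]}{[l+\frac12]})^z-1\big|\cdot\|\pi^\epsilon(a)\|_{l}$, and the first two factors together behave like $\OO(q^{-l(1-\Re z)})\cdot\OO(q^{-l\Re z})\cdot\OO(1)=\OO(q^{-l})$ times the already-known $\|\pi^\epsilon(a)\|_l = \OO(q^l)$ from the proof of Lemma \ref{Dec}, so the product is uniformly bounded in $l$; analyticity in $z$ is pointwise in $l$ and uniform on compacts, again via the same estimates, giving a norm-convergent family.

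Next, \eqref{qmutator}: here $db$ stands for $[\DD,b]$, which by \eqref{AbsDirac} and the off-diagonal form of $\DD$ shifts the $\pm$ grading; the point of the matrix $\chi$ is that $D$ acts as $[l+\tfrac12]$ while on the opposite copy, after the shift encoded by $F$, the relevant eigenvalue ratio degenerates in a way captured by the fixed scalars $q^{\pm1}$ in $\chi$ up to a lower-order (i.e.\ $|\DD|^{z-1}$) correction. I would compute $|\DD|^z[\DD,b]$ using $[\DD,b] = \DD\,b - b\,\DD = F|\DD|\,b - b\,F|\DD|$ and then apply \eqref{Dza} to move $|\DD|$ past $b$; the leading term produces $\chi^z\,[\DD,b]\,|\DD|^z$ and the remainder, which involves $B(b,\cdot)$ and commutators of $F$ (which is bounded and commutes with $\pi(\A_q)$ by \eqref{AbsDirac}) with powers of $|\DD|$, is absorbed into $\wt B(b,z)|\DD|^{z-1}$; the same boundedness/analyticity bookkeeping as above applies. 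Once \eqref{Dza} and \eqref{qmutator} hold for generators, the general one-form case \eqref{com} follows by writing $\Ag = \sum a_i\,db_i$, repeatedly applying the two identities to commute $|\DD|^z$ to the right past each $a_i$ and each $db_i$, and collecting the $\chi^z$ factors (one per $db_i$ — note $\chi^z$ commutes with the diagonal-in-grading scalar structure but not with $\pi(\A_q)$, so the order of collection must be tracked carefully); the accumulated lower-order terms, each of the form bounded operator times $|\DD|^{z-1}$, sum to $\widehat B(\Ag,z)|\DD|^{z-1}$. Finally \eqref{DA} is the special case $z=1$ of \eqref{com} multiplied on the left by $F$ and combined with $\DD = F|\DD|$: $\DD\Ag = F|\DD|\Ag = F(\chi\Ag|\DD| + \widehat B(\Ag,1)) = \chi^{-1}F\Ag|\DD| + \chi^{-1}F\widehat B(\Ag,1)$, using that $F$ and $\chi$ satisfy $F\chi = \chi^{-1}F$ by their explicit matrix forms. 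The claim that $a$ and $[\DD,a]$ preserve $\H^\infty$ then drops out: \eqref{Dza} and \eqref{qmutator} express $|\DD|^z a$ and $|\DD|^z[\DD,a]$ as (bounded operator)$\cdot|\DD|^z$ plus (bounded)$\cdot|\DD|^{z-1}$, so they map $\Dom(|\DD|^z)$ into $\H$ for every $z$, whence $a,[\DD,a]:\H^\infty\to\H^\infty$.

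I expect the main obstacle to be the uniform boundedness and analyticity of the remainder operators $B(a,z)$, $\wt B(b,z)$ and especially $\widehat B(\Ag,z)$ on the whole complex plane: one has to show that the $z$-dependent scalar $\big((\tfrac{[l+\epsilon+\frac12]}{[l+\frac12]})^z - 1\big)\,[l+\tfrac12]^{1-\Re z}$ stays bounded as $l\to\infty$ uniformly for $z$ in compact sets, which requires care because each factor individually blows up and only the product (after using $\|\pi^\epsilon(a)\|_l = \OO(q^l)$) is tame; tracking this through an arbitrary finite sum $\Ag = \sum a_i\,db_i$, where products of generators occur and the exponential-decay estimates must be shown to be stable under multiplication, is the genuinely technical part. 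The algebraic manipulations with $F$, $\chi$ and the grading, and the reduction to generators, are routine once the decomposition of Lemma \ref{Dec} and the asymptotics $X^\pm_{l,m} = \OO(q^l)$, $[l+\tfrac12] = \OO(q^{-l})$ are in hand.
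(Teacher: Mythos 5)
Your treatment of \eqref{Dza} is essentially the paper's proof: the paper writes $|\DD|^z\pi^\pm(a)-\pi^\pm(a)|\DD|^z=\big((|\DD|^z-|\DD^\pm|^z)|\DD^\pm|^{-z}\big)\big(|\DD^\pm|\pi^\pm(a)\big)|\DD|^{z-1}$ and bounds the two brackets separately, which is exactly your block-by-block estimate in different packaging (your displayed commutator identity is missing a factor $|\DD|^z$ on the right, but the norm bookkeeping you actually run, $\OO(q^{-l})\cdot\OO(q^{l})=\OO(1)$, is the correct one). Likewise your derivations of \eqref{com} and \eqref{DA} from the first two identities, using $F\chi=\chi^{-1}F$, coincide with the paper's (one small slip: by Lemma \ref{lemma2as} the operator $\chi^z$ \emph{does} commute with $\pi(\A_q)$, since both are block-diagonal and $\chi^z$ is scalar on each block, which only makes the collection of factors easier).

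The genuine gap is \eqref{qmutator}. You propose to obtain it by writing $db=F|\DD|\,b-b\,F|\DD|$ and pushing $|\DD|$ through $b$ via \eqref{Dza}, claiming "the leading term produces $\chi^z\,db\,|\DD|^z$." It cannot: \eqref{Dza} contains no $\chi$, and carrying out your manipulation gives $|\DD|^z\,db=|\DD|^z[F,b]\,|\DD|+|\DD|^z FB(b,1)$, where neither $[F,b]$ nor $B(b,1)$ lies in $\A_q$, so \eqref{Dza} gives you no way to commute $|\DD|^z$ past them, let alone to identify the leading conjugation factor as $\chi^z$. The factor $\chi^z$ encodes a genuinely new structural fact: the off-diagonal blocks $D\pi_\mp(b)-\pi_\pm(b)D$ of $db$ are, to leading order in $l$, pure shifts by $\mp1$ in $l$ (so that conjugation by $D^z$ multiplies them by $\big([l\mp1+\tfrac12]/[l+\tfrac12]\big)^z\to q^{\pm z}$), the level-preserving and wrong-direction shift components being suppressed by relative factors $\OO(q^{2l})$. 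This requires delicate cancellations between the explicit coefficients of the two inequivalent representations $\pi_+$ and $\pi_-$ in (\ref{Arep}--\ref{Bsrep}); it is precisely the content of \cite[Corollary 3.3]{NeshTuset}, which the paper cites for \eqref{qmutator} rather than reproving. Your sketch replaces this computation with an appeal to \eqref{Dza} that does not deliver it, so as written the proof of \eqref{qmutator} — and hence of \eqref{com} and \eqref{DA}, which depend on it — is incomplete.
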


\begin{proof} 
From Lemma \ref{Dec} we know that $|\DD|^z \,\pi^0(a) = \pi^0(a) \,|\DD|^z$ and that $|\DD|\, \pi^\pm(a)$ is a bounded operator. Moreover, we have
\begin{align*}
|\DD|^z \,\pi^\pm(a) - \pi^\pm(a)\, |\DD|^z  =:  \left(|\DD|^z - |\DD^\pm|^z \right) \pi^\pm(a), 
\end{align*}
where $|\DD^\pm|$ is essentially the module of a shifted Dirac operator, that is a $\Z_2$-graded
operator with the eigenvalues $\pm |w| [l+ \tfrac{1}{2} \pm 1]$. Our aim is to show that the above 
expression could be written as $B(\pi^\pm(a),z) |\DD|^{z-1}$ for an analytic function $B(\pi^\pm(a),z)$ in $\B(\H)$:
\begin{align*}
|\DD|^z \,\pi^\pm(a) - \pi^\pm(a)\, |\DD|^z &=
\left(|\DD|^z - |\DD^\pm|^z \right) |\DD^\pm|^{-z+1} \pi^\pm(a) |\DD|^{z-1} \\
&= \left(\left(|\DD|^z - |\DD^\pm|^z \right) |\DD^\pm|^{-z} \right)\, \big(|\DD^\pm|  \, \pi^\pm(a) \big)\, |\DD|^{z-1} \\
& =: B(\pi^\pm(a),z) |\DD|^{z-1}.
\end{align*}
Indeed, the first bracket is a diagonal operator with eigenvalues
$$\big([l+\tfrac{1}{2}]^z-[l+ \tfrac{1}{2} \pm 1]^z\big) \big([l+\tfrac{1}{2} \pm 1] \big)^{-z}$$
 and for any $z \in \C$, it is bounded and analytic as a function of $z$. On
 the other hand, the element $|\DD^\pm|\, \pi^\pm(a)$ is bounded, as we observed earlier, hence
 the conclusion.

The second equality \eqref{qmutator} is obtained in \cite[Corollary 3.3]{NeshTuset} and the equality \eqref{com} follows from the two previous one. Using $z=1$ in \eqref{com} together with $F\chi=\chi^{-1}F$ we obtain 
\begin{align*}
\DD \Ag = F |\DD| \Ag = F \chi \Ag |\DD| + F \chi \widehat{B}(\Ag,1) = \chi^{-1} F \Ag |\DD| + \chi^{-1} F \widehat{B}(\Ag,1).
\end{align*}
\end{proof}

\begin{Prop}
\label{op0}
We have $\A_q \subset \op^0$ and $[\DD,\A_q] \subset \op^0$. Moreover $J,J^{-1} \in \op^0$.
\end{Prop}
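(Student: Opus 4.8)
The plan is to reduce everything to the operator-valued estimates already established in Lemma \ref{lemma1as}. Recall that $\op^0$ consists of operators $T$ defined on $\H^\infty$ that extend to bounded maps $\H^s \to \H^s$ for every $s \geq 0$; equivalently (since $\H^s = \Dom(|\DD|^s)$), one needs $|\DD|^s T |\DD|^{-s}$ to be bounded on $\H$ for every $s \geq 0$, together with the fact that $T$ preserves $\H^\infty$. So the first step is: for $a \in \A_q$ and $s \geq 0$, use \eqref{Dza} with $z = s$ to write
\begin{align*}
|\DD|^s a |\DD|^{-s} = a + B(a,s)\,|\DD|^{-1},
\end{align*}
and note the right-hand side is bounded because $a$ is bounded, $B(a,s) \in \B(\H)$ by Lemma \ref{lemma1as}, and $|\DD|^{-1}$ is bounded (the eigenvalues $|w|[l+\tfrac12]$ are bounded below away from $0$ for $l \in \oh + \N$). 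That $a$ preserves $\H^\infty$ is also part of the statement of Lemma \ref{lemma1as}. Hence $\A_q \subset \op^0$.

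Next, for $[\DD,a] = \DD\,\pi^+(a) + \DD\,\pi^-(a)$ — here I use Lemma \ref{Dec}, which gives $[\DD,\pi^0(a)] = 0$ and tells us $\DD\,\pi^\pm(a)$ is bounded — I want $|\DD|^s [\DD,a] |\DD|^{-s}$ bounded. The cleanest route is to apply \eqref{qmutator}: writing a generator's commutator $db := [\DD, b]$ in the notation of the lemma, we have $|\DD|^s\,db = \chi^s\,db\,|\DD|^s + \wt B(b,s)\,|\DD|^{s-1}$, so
\begin{align*}
|\DD|^s\,db\,|\DD|^{-s} = \chi^s\,db + \wt B(b,s)\,|\DD|^{-1},
\end{align*}
which is bounded since $\chi^s$ is a fixed bounded diagonal operator, $db$ is bounded by Lemma \ref{Dec}, and the second term is bounded as before. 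Since $[\DD,a]$ for a general $a \in \A_q$ is a polynomial expression built from the $[\DD,\text{generator}]$'s and elements of $\A_q$ (Leibniz rule) — all of which lie in $\op^0$, and $\op^0$ is an algebra — we conclude $[\DD,\A_q] \subset \op^0$. One should also note $[\DD,a]$ preserves $\H^\infty$, which again is asserted in Lemma \ref{lemma1as}.

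Finally, for $J$ and $J^{-1}$: from the explicit formula $J\ket{l,m}_\pm = i^{2m}\ket{l,-m}_\mp$ one sees directly that $J$ maps the eigenspace of $|\DD|$ with eigenvalue $|w|[l+\tfrac12]$ to itself (it only flips $m \mapsto -m$ and swaps the two copies of $\H_\oh$, leaving $l$ fixed), so $J$ commutes with $|\DD|$, hence with every $|\DD|^s$; therefore $|\DD|^s J |\DD|^{-s} = J$ is bounded for all $s$, $J$ preserves each $\H^s$, and $J \in \op^0$; the same argument applies to $J^{-1}$ (and $J^{-1} = -J$ anyway since $J^2 = -1$). The only mild subtlety to be careful about — the "main obstacle," such as it is — is the bookkeeping in the second step: one must be sure that the Leibniz expansion of $[\DD,a]$ for a monomial $a$ in the generators stays inside the algebra $\op^0$ and that no unbounded pieces sneak in; this is guaranteed by combining Lemma \ref{Dec} (boundedness of $\DD\pi^\pm$ and vanishing of $[\DD,\pi^0]$) with the multiplicativity $\op^r \cdot \op^s \subset \op^{r+s}$ recalled above, but it is worth spelling out rather than asserting.
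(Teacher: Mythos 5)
Your proof is correct and takes essentially the same route as the paper's: both rest on the commutation identities of Lemma \ref{lemma1as} (equivalently, writing $|\DD|^s a |\DD|^{-s}$ and $|\DD|^s\,db\,|\DD|^{-s}$ as a bounded operator plus a bounded correction times $|\DD|^{-1}$) together with the observation that $J$ commutes with $\DD$. The paper's argument is simply a compressed version of what you spell out.
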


\begin{proof}
The assertion follows directly from Lemma \ref{lemma1as} which allows us to perform commutations of $|\DD|$ with one-forms (given by finite sums of $adb \vc a[\DD,b]$) up to operators of lower order and the fact $\H^{s} \subset \H^{s-1}$ for any $s \in \R$. Since $J$ commutes with $\DD$, it preserves $\H^\infty$, so $J \in \op^0$.
\end{proof}

To construct a pseudodifferential calculus, we need one additional property.

\begin{lemma}
\label{lemma2as}
For any $a \in \A$ and $z \in \C$, we have
\begin{align*}
 \chi^z \, a = a \,\chi^z , \quad \chi^z \,\DD = \DD\, \chi^{-z},\quad \chi^z \,da = da \,\chi^{-z}.
\end{align*}
\end{lemma}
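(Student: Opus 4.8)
The plan is to reduce all three identities to the block structure of the operators, as encoded in \eqref{Defpi} and \eqref{AbsDirac}, together with the single relation $F\chi=\chi^{-1}F$ already used in the proof of Lemma \ref{lemma1as}. First observe that $\chi^z=\bigl(\begin{smallmatrix} q^z & 0 \\ 0 & q^{-z}\end{smallmatrix}\bigr)$ is a bounded, block-scalar operator on $\H=\H_+\oplus\H_-$, whereas $\pi(a)=\bigl(\begin{smallmatrix}\pi_+(a) & 0\\0 & \pi_-(a)\end{smallmatrix}\bigr)$ is block-diagonal; hence $\chi^z a=a\chi^z$ is immediate, as an identity of bounded operators. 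Since moreover $|\DD|=|w|\bigl(\begin{smallmatrix}D & 0\\0 & D\end{smallmatrix}\bigr)$ is block-diagonal with equal diagonal blocks, $\chi^z$ commutes with $|\DD|$, hence preserves every $\H^s$ and in particular $\H^\infty$ and $\Dom(\DD)$; this is what makes the remaining two (a priori unbounded) identities meaningful, as operator identities on $\H^\infty$.

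For the second identity I would write $\DD=F|\DD|$ and use $\chi^z F=F\chi^{-z}$, which is immediate from the explicit form of $\chi$ and $F$ ($\chi^z$ is block-scalar with reciprocal entries and $F$ is off-diagonal; equivalently, since $F^2=1$ and $F\chi F=\chi^{-1}$, functional calculus gives $F\chi^z F=\chi^{-z}$). Then
\begin{align*}
\chi^z\,\DD = \chi^z F\,|\DD| = F\,\chi^{-z}\,|\DD| = F\,|\DD|\,\chi^{-z} = \DD\,\chi^{-z},
\end{align*}
the third step using $[\chi^z,|\DD|]=0$.

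The third identity is then pure bookkeeping: with $da=[\DD,a]=\DD a-a\DD$ one computes
\begin{align*}
\chi^z\,da = \chi^z\DD\,a - \chi^z a\,\DD = \DD\,\chi^{-z}a - a\,\chi^z\DD = \DD\,a\,\chi^{-z} - a\,\DD\,\chi^{-z} = da\,\chi^{-z},
\end{align*}
using $\chi^z\DD=\DD\chi^{-z}$ on the outer factors and $\chi^{\pm z}a=a\chi^{\pm z}$ in the middle; since $da$ is bounded (Proposition \ref{op0}), the identity, first established on $\H^\infty$, extends to all of $\H$. The only point requiring a word of care is the domain remark in the first paragraph; beyond that there is no real obstacle, the lemma being a formal consequence of the block decomposition of $\pi(a)$, $|\DD|$ and $F$.
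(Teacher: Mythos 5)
Your proof is correct and rests on exactly the same observation as the paper's: $\chi^z$ commutes with block-diagonal (chirality-preserving) operators such as $\pi(a)$ and $|\DD|$, and conjugates to $\chi^{-z}$ when moved past off-diagonal ones such as $F$ and $\DD$. The paper disposes of all three identities in one line by this block-structure remark (noting that $da$ is itself off-diagonal), whereas you derive the third algebraically from the first two and add the domain caveat; these are cosmetic differences only.
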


\begin{proof}
The lemma just uses the fact that $\chi$ commutes with the chirality-preserving operators ($T : \H_{\pm} \to \H_{\pm}$) and
\begin{align*}
\left( \begin{smallmatrix} q^z & 0 \\ 0 & q^{-z} \end{smallmatrix} \right) \left(\begin{smallmatrix} 0 & T_1 \\ T_2 & 0 \end{smallmatrix}\right) = \left(\begin{smallmatrix} 0 & T_1 \\ T_2 & 0 \end{smallmatrix}\right ) \left (\begin{smallmatrix} q^{-z} & 0 \\ 0 & q^{z} \end{smallmatrix}\right ).
\end{align*}
\end{proof}

Lemmas \ref{lemma1as} and \ref{lemma2as} guarantee that we can commute the elements of the algebra generated by $a$, $J a J^{-1}$ and $[\DD,a]$, $a \in \A_q$, through $|\DD|$ without changing its $\op$ class. The commutations through $J$ and $F$ neither would change the class. It thus justified to define the order of an operator on $\H$ in the non-regular case to be its $\op$ class instead of the standard approach based on $\OP$ classes.

We shall keep the standard definition of the algebra of pseudodifferential operators \cite{ConnesMoscovici,IochumLevy}
\begin{align*}
\Psi(\A) \vc \big\{ P \; \big| \; \forall \; N \in \N, \quad \exists \; T \in D(\A), \, R \in \OP^{-N}, \, p \in \N, \quad P = T|\DD|^{-p} + R \big\},
\end{align*} 
with $D(\A)$ being a polynomial algebra generated by $\A, \DD, |\DD|$ (and $J \A J^{-1}$ in the real case).
On the other hand we change the notion of a pseudodifferential operator of order $k$ for
\begin{align} \label{NewOrder}
\Psi^k(\A) \vc \Psi(\A) \cap \op^k.
\end{align}

The lemmas of this section prove that $\Psi(\A)$ is still a graded algebra with the grading defined by \eqref{NewOrder} and that $\PDO$'s of complex order are sound and controllable. 

Finally, let us remark that in the regular case $|\DD|^{-1} \in \Psi(\A) \cap \OP^0$, but $|\DD|^{-1} \notin \B_0$ and that, for a $d$-dimensional compact spin Riemannian manifold, the dimension spectrum of Definition \ref{dimspec} is only included in $\set{d-n \,\vert \,n\in \N}$ while to get equality one needs the whole set of $\Psi^0(\A)$ \cite{IochumLevy}.

The above remarks encourage us change previous Definition \ref{dimspec} of dimension spectrum as:
\begin{definition}
\label{newdimspec}
Let $(\A,\H,\DD)$ be a spectral triple such that $\A$ and $[\DD,\A]$ are in $op^0$ and let $\B \vc \Psi^0(\A) = \Psi(\A) \cap \op^0$. The dimension spectrum $\Sd$ is a discrete subset of $\C$ defined as the poles of $\zD^P: s \mapsto \Tr(P |\DD|^{-s})$ for $P\in \B$.
\end{definition}
Note that $\B$ contains the algebra $\B_1$ of all operators of the form
 \begin{align}
 \label{b}
|\DD|^{z}  \omega |\DD|^{-z-p} ,
\end{align}
for any $z \in \C$, $p\in \N$ and any $\omega = a_0 d a_1 \cdots da_k$ where $k \geq 0$ and $a_j\in \A\cup J\A J^{-1}$, since $\op^r\cdot\op^s\subset \op^{r+s}$.
The algebra $\B$ could be seen as a the largest algebra of generalized pseudodifferential operators of order at most $0$. 

Remark that, when $(\A,\H,\DD)$ is regular ($\A$ and $[\DD,\A]$ are in $\OP^0=\bigcap_{n\in\N} \Dom\,\delta^n$), this definition extends Definition \ref{dimspec}: with regularity, the algebra $\B_0$ generated by the $\delta^n(\A)$ is included in $\OP^0$ and contained in $\B$. Moreover, since $\vert \DD\vert \,b \,\vert\DD\vert^{-1} - b =[\vert \DD\vert,b] \,\vert \DD\vert^{-1}$ for any $b\in \A \cup [\DD,\A]$ (by \cite[Lemma B.1]{ConnesMoscovici} $\vert \DD\vert \,b \,\vert\DD\vert^{-1} - b  \in op^{-1}$), we see that $\B_0$ is equal to $\B$ up to $\vert \DD \vert^{-1}\in \B_1 \subset \B$. In the regular case, the differences between $\B_0$ and $\B_1$ are not essential since they are subsets of pseudodifferential operators. 

Note that in \cite[p.206]{ConnesMoscovici} it is also assumed that the analytic continuation of $\zeta_\DD^P$ is such that $\Gamma(z)\,\zeta_\DD^P(z)$ is of rapid decay on vertical lines $z = x+ iy$, for any $x > 0$. This technical assumption was introduced to facilitate the use of heat kernel methods in the proof of local index index theorem \cite[p.217]{ConnesMoscovici} (see also \cite{NeshTuset}). It would not be satisfied for the standard Podle\'s sphere with the definition \eqref{dimspec} of $\B_0$, since unbounded elements would give rise to poles at $z = n + \tfrac{2 \pi}{\log q} i$ for $n \in \N_+$. However, with the new definition of $\B$, this assumption is satisfied. In the proof of Proposition \ref{MellinLemma2} we show that the contour integration is sound since $|\Gamma(x \pm i y) \,\zD(x \pm i y)|$ decays exponentially with $y$ for any value of $x$. On the other hand, from the proof of Proposition \ref{DimSp} it follows that the large $y$-behaviour of $\zeta_\DD^P(x\pm i y)$ is not altered by $P \in \B$ for $x >0$.

Now we are in a position to compute the dimension spectrum of the $\Uq$-equivariant triple on the standard Podle\'s sphere. 

\begin{Prop}
\label{DimSp}
When $P \in \B$, the set of poles of the function $\zD^P: s \mapsto \Tr(P |\DD|^{-s})$ is a 
subset of $-\N + i\frac{2 \pi}{\log q} \Z$ and the poles are at most of the second order.
\end{Prop}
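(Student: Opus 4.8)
The strategy is to reduce the general case $P\in\B$ to a controlled perturbation of the zeta function $\zD$ already analyzed in Proposition \ref{PropZeta}, using the $q$-commutation relations of Lemmas \ref{lemma1as} and \ref{lemma2as}. First I would observe that it suffices to treat the generators of $\B$, namely operators of the form \eqref{b}: an element $|\DD|^z\omega|\DD|^{-z-p}$ with $\omega=a_0da_1\cdots da_k$, $a_j\in\A_q\cup J\A_qJ^{-1}$, $z\in\C$, $p\in\N$; indeed a generic $P\in\Psi^0(\A)$ is, modulo $\OP^{-N}$ (hence modulo a holomorphic contribution to $\zD^P$ on an arbitrarily large half-plane), a finite sum of such monomials times powers of $|\DD|$, and the trace is linear. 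So I would fix one such monomial and compute $\Tr(P|\DD|^{-s})$.

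\textbf{Normal-ordering step.} The key move is to push all the $|\DD|$'s (and the $J$'s and $F$'s sitting inside $\omega$, via Lemma \ref{lemma2as}) to the right past the $a_j$'s and $da_j$'s, using \eqref{Dza} and \eqref{qmutator} from Lemma \ref{lemma1as}. Each such commutation replaces a factor by a $\chi$-twisted factor plus a term with one extra negative power of $|\DD|$ and a bounded analytic coefficient $B(\cdot,z)$ or $\wt B(\cdot,z)$. Iterating, and collecting the bounded pieces $\chi^{\,r}$ which are diagonal (eigenvalues $q^{\pm r}$ on $\H_\pm$), I expect to rewrite $P$ as a $z$-dependent, norm-convergent series $P=\sum_{j\geq 0} M_j\,|\DD|^{-j}$ where each $M_j$ is a bounded operator that is \emph{diagonal in the $(l,m)$ index up to finitely many shifts} (the shifts coming from the weighted-shift structure $\pi=\pi^++\pi^0+\pi^-$ of each generator, recalled before Lemma \ref{Dec}), with matrix entries that are bounded analytic functions of $s$ times explicit powers of $q^{l}$. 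Because $|\DD|$ itself acts diagonally with eigenvalue $|w|[l+\tfrac12]$, taking the trace amounts to summing the diagonal entries: $\Tr(P|\DD|^{-s})=\sum_{l,m,\pm}\langle l,m|M_0|l,m\rangle\,(|w|[l+\tfrac12])^{-s}+\text{(lower-order, i.e.\ shifted by }|\DD|^{-j}\text{)}$. The off-diagonal shift terms contribute only to the error series and I would bound them exactly as in the estimate \eqref{zDestim}, noting that an extra $|\DD|^{-j}$ with $j\geq1$ improves convergence and only shifts the half-plane of holomorphy.

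\textbf{Identifying the poles.} At this point $\Tr(P|\DD|^{-s})$ is a finite $\C$-linear combination (with coefficients analytic in $s$, coming from the $B,\wt B$ functions evaluated at shifted arguments) of series of the shape $\sum_{l\in\oh+\N}(2l+1)\,q^{\alpha l}\,(\text{analytic})\,\big(\tfrac{q^{-l+\half}(1-q^{2l+1})}{1-q^2}\big)^{-s}$ with various integer exponents $\alpha$ — exactly the type of series treated in Proposition \ref{PropZeta}. Expanding $(1-q^{2l+1})^{-s}$ by the binomial series as there, each such series becomes $\sum_n \tfrac{\Gamma(s+n)}{n!\Gamma(s)}q^{2n}(\text{const})\,(1-q^{s+2n+c})^{-2}$ for a shift $c$ coming from $\alpha$, times a prefactor analytic and nonvanishing; the poles come solely from the geometric-sum denominators $(1-q^{s+2n+c})^{-2}$, hence lie in $-\N+i\tfrac{2\pi}{\log q}\Z$ and are at most double. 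The meromorphy estimates are line-by-line the ones in Proposition \ref{PropZeta} (inequality \eqref{module} and the $\Gamma$-ratio bound), now with an extra bounded analytic factor that does not affect the location or order of poles, and with the shift terms contributing genuinely holomorphic corrections on each half-plane $\Re(s)>-2M$.

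\textbf{Main obstacle.} The delicate point is \emph{convergence and trace-class control of the normal-ordered series} $P=\sum_j M_j|\DD|^{-j}$ and the interchange of this sum with the trace and with the $l$-sum: one must check that each $M_j$ genuinely lands in $\op^{0}$ (so that $M_j|\DD|^{-j-\Re s}$ is trace-class for $\Re s$ large enough) with $s$-dependence controlled uniformly enough in $j$ to permit term-by-term analytic continuation. This is where Lemma \ref{lemma1as} does the real work — it is exactly the statement that the commutation remainders are bounded and analytic — but one still has to track the growth in $j$ of the operator norms of the $B,\wt B$ iterates, which I would do by the same $\Gamma$-ratio estimate $\sum_n q^{2n}|\Gamma(s+n)/\Gamma(s)|/n!\leq(1-q^2)^{-|s|}$ used in \eqref{zDestim}. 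Once that bookkeeping is in place, the pole structure is forced by the shape of the underlying $q$-zeta sums and nothing worse than a double pole at points of $-\N+i\tfrac{2\pi}{\log q}\Z$ can appear.
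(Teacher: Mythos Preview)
Your approach is essentially the same as the paper's, and it is correct in outline: normal-order using Lemmas \ref{lemma1as} and \ref{lemma2as}, reduce to $\zD$-type sums, and read off the pole structure. However, you have misidentified where the technical work lies, and one of your claimed steps is stated inaccurately.

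First, the normal-ordering does not produce an infinite series: each application of \eqref{Dza} or \eqref{qmutator} replaces one factor by two, so iterating through a monomial with finitely many $a$'s and $da$'s yields a \emph{finite} sum $P=\sum_{k=0}^{N}\chi^{r(k)}h_k\,|\DD|^{n-p-k}$ with $h_k$ bounded. There is no convergence issue here at all; your ``main obstacle'' is a non-obstacle.

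Second, and more importantly, you gloss over the structure of the diagonal matrix entries of the $h_k$. These are not simply ``bounded analytic functions of $s$ times powers of $q^l$''. They are built from the representation coefficients \eqref{Arep}--\eqref{alphaM} and depend on both $l$ and $m$, through square roots $\sqrt{[l\pm m]}$ and rational expressions. The paper's actual work consists in expanding each $P_{h_k}(l,m,r)$ as a convergent double power series $\sum_{\alpha,\beta}\wt{p}(\alpha,\beta,r)\,q^{\alpha(l+m)}q^{\beta(l-m)}$, then performing the sum over $m$ explicitly (this is where the split between the ``diagonal'' terms carrying the factor $2l+1$ and the ``off-diagonal'' terms arises), and finally proving a separate convergence lemma (Lemma following \eqref{zDb}) that the resulting triple series in $\alpha,\beta,j$ is locally uniformly convergent on $\C\setminus\Sd$. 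That lemma, not the normal-ordering bookkeeping, is the genuine analytic obstacle; without it you cannot invoke Weierstrass to conclude holomorphy away from $\Sd$. Your sketch jumps directly from ``$M_j$ bounded'' to ``series of the shape $\sum_l(2l+1)q^{\alpha l}\cdots$'', which skips exactly this step.
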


{\it Proof.} 
It is sufficient to consider 
\begin{align*}
P =  T |\DD|^{-p},
\end{align*}
with $T \in D(\A) \cap \op^n$ for some $n \in \N$ and $\N \ni p \geq n$. Applying a finite number of times the Lemmas \ref{lemma1as} and \ref{lemma2as} we can move all the operators $|\DD|$ and $\DD$ to the right hand side while moving all $\chi$ to the left. In the end we obtain
\begin{align} \label{P}
P =&  \sum_{k=0}^{N} \chi^{r(k)} h_k(T; q) \, |\DD|^{n-p-k},
\end{align}
where $h_k$ is a bounded operator on $\H$ and $r(k) \in \Z$ for any  $k \in \{0, \ldots, N\}$.

Since all generators of the algebra are represented 
by weighted shifts on the Hilbert space and $|\DD|$ is a diagonal operator, the operator $h_k$ must be again a finite sum of 
weighted shifts. On the basis vector $\ketQS $ of $\H_{\pm}$ it assumes the form for $r=0,1$
\begin{align*}
h_k\, \ket{l,m}_{(-)^r} = \sum_{r' = 0}^{1} \,\sum_{i= -M}^{M}\, \sum_{j=-N}^{N} P_{h_k}(l, m, r, r', i, j) \,\ket{l+i,m+j}_{(-)^{r'}}\,,
\end{align*}
where both the $M$, $N$ and the function $P_{h_k}$ depends on the arguments of the operator $h_k$. 
Since our aim is to compute $\Tr (P |\DD|^{-s})$, we are only be interested in the diagonal part of 
the operator $h_k$. Moreover, since $\zD^P(s) = \zD^{P_0}(s-n+p)$ with $$P_0 \vc P |\DD|^{-n+p},$$
we restrict to the case $p = n$.

Let us abbreviate $P_{h_k}(l, m, r) \vc P_{h_k}(l, m, r, r, 0, 0)$. Now, for any $h_k$, the function 
$P_{h_k}$ will be a finite composition of polynomials, rational functions and square roots of (bounded) 
variables $q^{l+m}, \,q^{l-m}$ and $q^l$ as can be deduced from the explicit forms of the 
representation  (\ref{Arep}--\ref{Bsrep}) and the Dirac operator (\ref{Dirac}). 

Moreover, due to reality of the coefficients 
$A^j_{l,m,\pm}$, $B^j_{l,m,\pm}$, $\wt{B}^j_{l,m,\pm}$, one can express the square roots in terms of infinite series using the formulas
\begin{align*}
\sqrt{[l \pm m]}  &= (q^{-1}-q)^{-1/2} q^{-(l\pm m)/2} \sqrt{1-q^{2(l \pm m)}} \\
&= - (q^{-1}-q)^{-1/2} \sum_{n=0}^{\infty} \tfrac{(2n)!}{4^n (n!)^2 (2n-1)} \, q^{(2n + 1/2)(l \pm m)}, 
\end{align*}
which are convergent for all $l \in \N+\oh$, $-l \leq m \leq l$ and $0<q<1$.

An analogous operation can be done for rational functions involved in the formulas 
(\ref{alphaP}--\ref{alphaM}). Since a finite sum 
and a finite product of convergent power series is again a power series with at least finite non-zero 
radius of convergence (such radius appears in next lemma), we may use the formula 
\begin{align}
\label{pf}
P_{h_k}(l,m,r) = \sum_{\alpha,\beta = 0}^{\infty} \wt{p_h}(\alpha,\beta,r) 
\,q^{\alpha (l+m)} \,q^{\beta (l-m)},
\end{align}
which holds for $l \in \N+\oh$, $-l \leq m \leq l$ and $0<q<1$.

Let us now use the above considerations to investigate the holomorphic structure of functions 
$\zD^{P_0}$. Since the sum over $k$ of \eqref{P} is finite it is sufficient to consider the function $\zD^{P,k}(s) \vc \Tr \big( h_k(T;q) |\DD|^{-k-s} \big)$. We have, 
assuming $\vert w \vert = 1$ to simplify,
\begin{align*}
\zD^{P,k}(s) &= \Tr (h_k |\DD|^{-s}) = \sum_{r = 0}^{1}\, \sum_{l \in \N + \oh} \,\sum_{m=-l}^{l} 
P_{h_k}(l,m,r) [l+\tfrac{1}{2}]_q^{-k-s} \\
 &= \sum_{r = 0}^{1} \,\sum_{l \in \N + \oh}  \,\sum_{m=-l}^{l} \,\sum_{\alpha,\beta = 0}^{\infty} 
\wt{P_{h_k}}(\alpha,\beta,r)\, q^{\alpha (l+m)} q^{\beta (l-m)} \left( 1-q^2 \right)^{s+k} 
\tfrac{q^{(s+k) (l-1/2)}}{\left(1 - q^{2l+1)} \right)^{s+k}} \,.
\end{align*}

Since for $\Re(s) > 0$, all of the series in the above expressions are convergent, we can perform 
the partial summation over $m$, so
\begin{align}
\zD^{P,k}(s)&=\sum_{r = 0}^{1} \,\sum_{l \in \N+\oh}  \,
\sump{\alpha,\beta= 0}{\infty} \,
\wt{P_{h_k}}(\alpha,\beta,r) \,q^{(\alpha+\beta) l} \,\tfrac{q^{(\beta-\alpha)l} - q^{(\alpha-\beta)(l+1)}}
{1-q^{\alpha-\beta}} \left( 1-q^2 \right)^{s+k} \tfrac{q^{(s+k) (l-1/2)}}{\left(1 - q^{2l+1} \right)^{s+k}}  \nonumber \\
& \hspace{0.5 cm}\quad + \sum_{r = 0}^{1} \,\sum_{l \in \N+\oh}^{\infty} \,\sum_{\alpha = 0}^{\infty} 
\wt{P_{h_k}}(\alpha,\alpha,r) \, (2l+1) \, q^{2\alpha l} \, \big( 1-q^2 \big)^{s+k} 
\tfrac{q^{(s+k) (l-1/2)}}{\left(1 - q^{2l+1} \right)^{s+k}}\,, \label{zDb_abc}
\end{align}
where ${\sum'}_{\alpha,\beta = 0}^{\,\infty}$ denotes the sum with the diagonal indices 
$\beta=\alpha$ omitted. 

Let us now make use of the analytic continuation of $\zD$ in \eqref{zeta_D1} to define an analytic continuation of $\zD^{P,k}(s)$:
\begin{align}
\zD^{P,k}(s)&=\left( 1-q^2 \right)^{s+k} \sum_{r = 0}^{1}\, 
\sump{\alpha,\beta = 0}{\infty} \, \sum_{j=0}^\infty \, \tfrac{\Gamma(s+k+j)}{j! \, \Gamma(s+k)} \,  \tfrac{\wt{P_{h_k}}(\alpha,\beta,r) \, q^{2 j} \, \left[ q^{\beta} - q^{2 \alpha - \beta} \right]}{(1 - q^{\alpha - \beta})(1-q^{s+k+2\beta+2j})(1-q^{s+k+2\alpha+2j})}\, \notag \\
& \hspace*{1cm} \quad+ 2 \left( 1-q^2 \right)^{s+k} \sum_{r = 0}^{1} \,\sum_{\alpha = 0}^{\infty} \,\sum_{j=0}^{\infty}  \, \tfrac{\Gamma(s+k+j)}{j! \, \Gamma(s+k)} \, \tfrac{\wt{P_{h_k}}(\alpha,\alpha,r) \, q^{ \alpha + 2 j}}{(1-q^{s+k+2\alpha+2j})^2} \, .\label{zDb}
\end{align}

The structure of the above formula suggests that the poles (of at most second order) of $\zD^{P,k}(s)$ are located at $s \in -\N_{\geq k} + i\tfrac{2 \pi }{\log q} \Z$ with possible cancellations due to vanishing of some of the coefficients. Since $k \geq 0$ we conclude that the set of poles of $\zD^{P_0}$ is included in 
$$
\Sd \vc -\N + i\tfrac{2 \pi }{\log q} \Z.
$$

To prove that \eqref{zDb} gives indeed a maximal analytic extension of $\zD^{P_0}$ and that it has no poles outside of $\Sd$ we need the following lemma.

\begin{Lm}
For any $r \in \{0,1\}$ and $k \in \{0, \ldots n\}$, the following series of \eqref{zDb} are locally uniformly convergent as functions of $s$ on $\C \setminus \Sd$:
\begin{align}
& \sump{\alpha,\beta = 0}{\infty}\, \,\sum_{j=0}^{\infty} \, \tfrac{\Gamma(s+k+j)}{j! \, \Gamma(s+k)} \,  \wt{P_{h_k}}(\alpha,\beta,r) \, q^{2 j + \beta} \,\big[(1 - q^{\alpha - \beta})(1-q^{s+k+2\beta+2j})(1-q^{s+k+2\alpha+2j}) \big]^{-1},\label{SeriesA} \\
& \sump{\alpha,\beta = 0}{\infty} \,\,\sum_{j=0}^{\infty} \, \tfrac{\Gamma(s+k+j)}{j! \, \Gamma(s+k)} \,  \wt{P_{h_k}}(\alpha,\beta,r) \, q^{2 j + 2\alpha - \beta} \big[(1 - q^{\alpha - \beta})(1-q^{s+k+2\beta+2j})(1-q^{s+k+2\alpha+2j})\big]^{-1} ,\label{SeriesB} \\
& \sum_{\alpha = 0}^{\infty} \,\,\sum_{j=0}^{\infty}  \, \tfrac{\Gamma(s+k+j)}{j! \, \Gamma(s+k)} \, \wt{P_{h_k}}(\alpha,\alpha,r) \, q^{ \alpha + 2 j} \,(1-q^{s+k+2\alpha+2j})^{-2} \, . \label{SeriesC}
\end{align}
\end{Lm}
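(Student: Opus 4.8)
The plan is to establish locally uniform convergence of each of the three series \eqref{SeriesA}--\eqref{SeriesC} by producing a summable majorant valid on compact subsets of $\C \setminus \Sd$. The three series differ only cosmetically, so I would treat \eqref{SeriesC} in full and then indicate the (minor) modifications needed for \eqref{SeriesA} and \eqref{SeriesB}. Fix a compact set $K \subset \C \setminus \Sd$; since $\Sd$ is closed and discrete, $\dist(K,\Sd) = \eta > 0$, and $|\Re(s)|$ is bounded on $K$ by some $C_K$. The key point is that the denominators $(1-q^{s+k+2\alpha+2j})$ are bounded away from $0$ uniformly for $s\in K$ and all $\alpha,j\in\N$: exactly as in the estimate \eqref{module} in the proof of Proposition \ref{PropZeta}, one shows $|1-q^{s+k+2\alpha+2j}|^2 \geq c(K) > 0$, where $c(K)$ depends only on $q$, $k$ and the geometry of $K$ relative to the lattice $\Sd$. (The worst case is when $\Re(s)+k+2\alpha+2j$ is close to $0$, which can happen for at most one value of $\alpha+j$, and then the imaginary part is kept away from $\tfrac{2\pi}{\log q}\Z$ by $\dist(K,\Sd)$; otherwise $1-q^{\Re(s)+k+2\alpha+2j}$ is bounded below in absolute value purely by the real part.)

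Granting this denominator bound, the summand of \eqref{SeriesC} is dominated, uniformly for $s\in K$, by a constant times
\begin{align*}
\tfrac{1}{j!}\,\Bigl|\tfrac{\Gamma(s+k+j)}{\Gamma(s+k)}\Bigr|\; \bigl|\wt{P_{h_k}}(\alpha,\alpha,r)\bigr| \; q^{\alpha+2j}.
\end{align*}
The Gamma-ratio is handled exactly as in \eqref{zDestim}: $\bigl|\tfrac{\Gamma(s+k+j)}{\Gamma(s+k)}\bigr| = |(s+k+j-1)\cdots(s+k)| \leq \tfrac{\Gamma(C_K+k+j)}{\Gamma(C_K+k)}$, uniformly for $s \in K$, so $\sum_j \tfrac{q^{2j}}{j!}\bigl|\tfrac{\Gamma(s+k+j)}{\Gamma(s+k)}\bigr| \leq (1-q^2)^{-(C_K+k)} < \infty$. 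It remains to control $\sum_\alpha |\wt{P_{h_k}}(\alpha,\alpha,r)|\,q^\alpha$. Here I would invoke the structure of $P_{h_k}$ recalled just before the lemma: it is a finite combination of polynomials, rational functions and square roots of the bounded variables $q^{l+m}$, $q^{l-m}$, $q^l$, and each such building block was expanded as a power series in $q^{l\pm m}$ convergent for $0<q<1$, hence with radius of convergence (in the formal variables) strictly exceeding $1$. Taking finite sums and products preserves this, so the double series $\sum_{\alpha,\beta}\wt{P_{h_k}}(\alpha,\beta,r)\,x^\alpha y^\beta$ converges absolutely for $|x|,|y|\leq \rho$ for some $\rho>1$; in particular $\sum_\alpha |\wt{P_{h_k}}(\alpha,\alpha,r)|\,\rho^{2\alpha} < \infty$, so a fortiori $\sum_\alpha |\wt{P_{h_k}}(\alpha,\alpha,r)|\,q^\alpha < \infty$ since $q<1<\rho$. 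Combining the three estimates gives a finite $s$-independent majorant for \eqref{SeriesC} on $K$, and the Weierstrass $M$-test yields locally uniform convergence; since each summand is holomorphic in $s$ on $\C\setminus\Sd$, so is the sum.

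For \eqref{SeriesA} and \eqref{SeriesB} the only new feature is the extra denominator factor $(1-q^{\alpha-\beta})^{-1}$, which is singular on the diagonal $\alpha=\beta$ — but that diagonal is precisely what the primed sum $\sum'$ omits, and off the diagonal one has $|1-q^{\alpha-\beta}|\geq 1-q>0$ for $\alpha-\beta\geq 1$ and $|1-q^{\alpha-\beta}| = q^{\alpha-\beta}-1 \geq q^{-1}-1>0$ for $\alpha-\beta\leq -1$; so this factor is uniformly bounded below on the index set. The numerator $q^{2j+\beta}$ (resp.\ $q^{2j+2\alpha-\beta}$) together with $|\wt{P_{h_k}}(\alpha,\beta,r)|$ is then dominated, using $\rho>1$, by a constant times $\rho^{-\alpha-\beta}|\wt{P_{h_k}}(\alpha,\beta,r)|\rho^{\alpha+\beta}\cdot q^{2j}$ (one checks $\beta \geq c(\alpha+\beta)$ and $2\alpha-\beta\geq c(\alpha+\beta)$ fail in general — so instead one simply notes $q^\beta\leq q^0=1$ and absorbs the geometric decay from $\sum_{\alpha,\beta}|\wt{P_{h_k}}(\alpha,\beta,r)|\rho^{\alpha+\beta}<\infty$ directly, since $\rho>1$ beats $q^0$), and the $j$-sum is again $(1-q^2)^{-(C_K+k)}$ times a constant.

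\textbf{The main obstacle.} I expect the genuinely delicate point to be the uniform lower bound $|1-q^{s+k+2\alpha+2j}|^2\geq c(K)>0$ for \emph{all} $\alpha,j\in\N$ simultaneously with $s$ ranging over the compact set $K$: one must argue that although $\Re(s)+k+2\alpha+2j$ can be made small, it can be small for at most one pair $(\alpha,j)$-sum value, and for that one value the distance $\eta = \dist(K,\Sd)$ saves the estimate, while for all the others the real part alone forces a bound. Everything else — the Gamma-ratio bound, the power-series radius argument for $\wt{P_{h_k}}$, the $M$-test — is routine and parallels the already-completed proofs of Propositions \ref{PropZeta} and \ref{DimSp}.
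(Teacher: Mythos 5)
Your overall skeleton (Weierstrass $M$-test on compact subsets of $\C\setminus\Sd$, plus a uniform lower bound on the denominators $|1-q^{s+k+2\alpha+2j}|$) matches the paper's, and your explicit treatment of that lower bound via $\mathrm{dist}(K,\Sd)>0$ is sound. But there is a genuine gap in how you produce the summable majorant, concentrated in two places. First, the claim that the power series $\sum_{\alpha,\beta}\wt{P_{h_k}}(\alpha,\beta,r)\,x^{\alpha}y^{\beta}$ has radius of convergence $\rho>1$ is false: the building blocks include the binomial expansion of $\sqrt{1-q^{2(l\pm m)}}$, whose coefficients decay only like $n^{-3/2}$, so the radius in the corresponding variable is exactly $1$ (the series converges absolutely \emph{at} $1$ but diverges beyond it). Convergence of \eqref{pf} for $0<q<1$ only gives convergence at the boundary points reached by $(q^{l+m},q^{l-m})$, and the usable consequences are precisely the two series \eqref{pfconv}, namely $\sum_{\alpha,\beta}|\wt{P_{h_k}}(\alpha,\beta,r)|\,q^{\alpha}<\infty$ and $\sum_{\alpha,\beta}|\wt{P_{h_k}}(\alpha,\beta,r)|\,q^{\beta}<\infty$ — not $\sum_{\alpha,\beta}|\wt{P_{h_k}}(\alpha,\beta,r)|<\infty$ (take $\wt{P}(\alpha,\beta)=\delta_{\alpha\beta}$ for a counterexample consistent with \eqref{pfconv}). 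So for \eqref{SeriesA} you cannot afford to discard $q^{\beta}$ via ``$q^{\beta}\leq 1$'' and then sum $|\wt{P_{h_k}}|$ over $\alpha,\beta$; you must keep $q^{\beta}$ attached to $\wt{P_{h_k}}$ and use the second series of \eqref{pfconv}. (For \eqref{SeriesC} your conclusion survives, since the diagonal restriction of $\sum|\wt{P_{h_k}}|q^{\alpha}$ is what you need.)

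Second, and more seriously, your treatment of \eqref{SeriesB} fails outright: the factor $q^{2\alpha-\beta}=q^{2\alpha}q^{-\beta}$ is \emph{unbounded} as $\beta\to\infty$ with $\alpha$ fixed, and bounding $|1-q^{\alpha-\beta}|$ from below merely by the constant $\min(1-q,\,q^{-1}-1)$ leaves a majorant containing $q^{-\beta}$, which is not summable against any of the available bounds on $\wt{P_{h_k}}$ (that would require convergence of the power series at $y=q^{-1}>1$). The compensation must come from the factor $(1-q^{\alpha-\beta})^{-1}$ itself: the ratio $q^{\alpha-\beta}/(1-q^{\alpha-\beta})$ is uniformly bounded over $\alpha\neq\beta$ (by $(1-q)^{-1}$ in absolute value), because for $\beta>\alpha$ the denominator grows like $-q^{\alpha-\beta}$. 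Pairing one power of $q^{\alpha-\beta}$ with this denominator reduces $q^{2\alpha-\beta}$ effectively to $q^{\alpha}$ times a bounded factor, and the remaining $\sum|\wt{P_{h_k}}|q^{\alpha}$ converges by \eqref{pfconv}. This is exactly the splitting the paper performs with its $a_i^{B}$ and $b_i^{B}$; without it the $M$-test does not close for \eqref{SeriesB}.
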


\begin{proof}
The formula (\ref{pf}) for $m=\pm l=\pm \oh$ guarantees the convergence of the following numeric series for $0<q<1$
\begin{align} 
\label{pfconv}
\sum_{\alpha,\beta = 0}^{\infty} \wt{P_{h_k}}(\alpha,\beta,r)
\,q^{\alpha}, \qquad \sum_{\alpha,\beta = 0}^{\infty} \wt{P_{h_k}}(\alpha,\beta,r) 
\,q^{\beta}.
\end{align}
Moreover, the following series of functions of the argument $s$
\begin{align}
\label{Jconv}
\sum_{j = 0}^{\infty} \tfrac{\Gamma(s+k+j)}{j! \, \Gamma(s+k)} \, q^j
\end{align}
is locally uniformly convergent for any $k \in \N$ and any $0 < q < 1$.

In the proof of convergence of the series (\ref{SeriesA}--\ref{SeriesC}) we use the comparative criterion. Recall that, if $\sum_{i \in \N^T} a_i(s)$ is locally uniformly convergent for some $T \in \N_{+}$, then $\sum_{i \in \N^T} a_i(s) \,b_i(s)$ is so when the sequence $\{|b_i(s)|\}_{i \in \N^T}$ is locally uniformly bounded. 
\\
In the series (\ref{SeriesA}--\ref{SeriesB}) we have a multiindex $i = \{\alpha,\beta,j\}$, so let us set
\begin{align*}
a_i^A(s) & = \wt{P_{h_k}}(\alpha,\beta,r) \,q^{\alpha} \, q^j \, \tfrac{\Gamma(s+k+j)}{j! \, \Gamma(s+k)}\,, \\
a_i^B(s) & = \wt{P_{h_k}}(\alpha,\beta,r) \,q^{\beta} \, q^j \, \tfrac{\Gamma(s+k+j)}{j! \, \Gamma(s+k)}\,,\\
b_i^A(s) & =  q^{j}\,\big[(1 - q^{\alpha - \beta})(1-q^{s+k+2\beta+2j})(1-q^{s+k+2\alpha+2j})\big]^{-1}\,, \\
b_i^B(s) & = q^{j + \alpha - \beta}\,\big[(1 - q^{\alpha - \beta})(1-q^{s+k+2\beta+2j})(1-q^{s+k+2\alpha+2j})\big]^{-1}\,.
\end{align*}
Similarly, for (\ref{SeriesC}) (with multiindex $i = \{\alpha,j\}$) we set
\begin{align*}
a_i^C(s) & = \wt{P_{h_k}}(\alpha,\alpha,r) \, q^{\alpha} \, q^j \, \tfrac{\Gamma(s+k+j)}{j! \, \Gamma(s+k)}, \\
b_i^C(s) & =  q^{j}\,(1-q^{s+k+2\alpha+2j})^{-2}.
\end{align*}

Due to the convergence of series \eqref{pfconv} and local uniform convergence of \eqref{Jconv}, 
the series $\sum_{i \in \N^T} a_i^L(s)$ are locally uniformly convergent in each of the three cases ($L = A,B,C$).
\\
Hence, to show the local uniform convergence of the series (\ref{SeriesA}--\ref{SeriesC}), it is sufficient to check that for any $s_0 \in \C \setminus \Sd$, there exist a neighborhood $U \ni s_0$, $U \cap \Sd = \emptyset$ and constants $M = M(U) > 0$, $N = N(U) \in \N$ such that for any $i > N$ (which means $\alpha,\beta,j > N$) we have $|b_i^A(s)|, \, |b_i^B(s)|, \, |b_i^C(s)| \leq M$ for any $s \in U$.
\\
Let us consider first the pointwise limit $\alpha,\beta,j \to \infty$ with $\alpha -\beta = \text{const.} = a \in \Z^*$ (recall that we need $\alpha \neq \beta$ in series (\ref{SeriesA}) and (\ref{SeriesB})) of $b_i^L(s_0)$ with $L = A,B$. It is straightforward that both limits exist for any $a \in \Z^*$, $s_0 \notin \Sd$, $0<q<1$ and are both equal to $0$. Since  $b_i^L$ are analytic near any $s_0 \notin \Sd$, we can always find a suitable neighborhood $U$ and a universal upper bound $M(U) = \sup_{s \in U} \{ M(s) \, \big| \,\forall i > N(s),\, |b_i^L(s)| < M(s) \}$ to guarantee the local uniform boundedness. The same argument shows that $\{|b_i^C(s)|\}_{i \in \N^2}$ is locally uniformly bounded.
\\
Consequently, the series (\ref{SeriesA}--\ref{SeriesC}) are locally uniformly convergent for any $0<q<1$. 
\end{proof}

\begin{proof}[End of proof of Proposition \ref{DimSp}]
To finish the proof we use Weierstrass' theorem which states that if a sequence of analytic functions $\{f_n\}$ converges locally uniformly to a function $f$, then $f$ is also analytic. In the case of locally uniformly convergent series, the sequence is defined as usual in terms of partial sums.

Finally, recall that $\zD^P(s) = \zD^{P_0}(s-n+p)$, so for any $r \in \N$ the poles of $\zD^P$ would form a subset of $-\N_{\geq r} + i\tfrac{2 \pi}{\log q} \Z$, hence in particular of $\Sd$.

We have thus constructed an analytic extension of the function $\zD^P$ for any $P \in \B$ and have shown that the set of poles of any of those functions form a subset of $\Sd$. Moreover, the poles are of at most second order. 
\end{proof}

\begin{Cor}
The dimension spectrum (see Definition \ref{newdimspec}) of the $\Uq$-equivariant Podle\'s sphere is
\begin{align}
\label{Sd}
\Sd(S_q^2) = - \N + i\tfrac{2 \pi}{\log q}\, \Z
\end{align}
and the poles are at most of the second order. 
\end{Cor}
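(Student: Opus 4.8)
The plan is to prove the two inclusions giving the equality \eqref{Sd}. For ``$\subseteq$'' together with the bound on the pole order there is nothing to do beyond quoting Proposition \ref{DimSp}: by Definition \ref{newdimspec}, $\Sd(S_q^2)$ is the union over $P\in\B$ of the pole sets of the functions $\zD^P$, and Proposition \ref{DimSp} asserts that each such pole set lies in $-\N+i\tfrac{2\pi}{\log q}\Z$ with all poles of order at most two. Hence $\Sd(S_q^2)\subseteq -\N+i\tfrac{2\pi}{\log q}\Z$, with poles of order at most two everywhere.

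For ``$\supseteq$'' I must exhibit, for every point $s_0\in -\N+i\tfrac{2\pi}{\log q}\Z$, an operator $P\in\B$ for which $s_0$ is a genuine pole of $\zD^P$. Two choices suffice: $P=\id$ and $P=|\DD|^{-1}$, both of which belong to $\B$ (indeed to $\B_1\subset\B$, as recalled just after Definition \ref{newdimspec}). For these, $\zD^{\id}(s)=\Tr(|\DD|^{-s})=\zD(s)$ and, for $P=|\DD|^{-1}$, $\zD^P(s)=\Tr(|\DD|^{-s-1})=\zD(s+1)$. By Proposition \ref{PropZeta}, the set of poles of $\zD$ is exactly $\Sd_1=-2\N+i\tfrac{2\pi}{\log q}\Z$, all of second order --- these are precisely the lattice points with even real part. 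The poles of $s\mapsto\zD(s+1)$ are then $\Sd_1-1=-2\N-1+i\tfrac{2\pi}{\log q}\Z$, i.e. the lattice points with odd real part; the shift by the real number $1$ leaves the imaginary lattice $\tfrac{2\pi}{\log q}\Z$ intact. The union of the two pole sets is all of $-\N+i\tfrac{2\pi}{\log q}\Z$, which yields the reverse inclusion. Since $\Sd_1$ is realized with second-order poles (for instance $s=0$ is a double pole of $\zD$), the order bound in the statement is moreover sharp.

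The only point that warrants a second look is that \emph{every} element of $\Sd_1$ is an honest pole of $\zD$ rather than a cancellation artefact of the representation \eqref{zeta_D1}; but this is exactly the content of Proposition \ref{PropZeta}. In brief, near $s_0=-2M+i\tfrac{2\pi}{\log q}N$ only the $n=M$ summand of \eqref{zeta_D1} is singular (because $q^{i2\pi N/\log q}=1$ makes $q^{s_0+2n}=q^{2(n-M)}\neq 1$ for $n\neq M$), and the leading Laurent coefficient of that summand, $\tfrac{\Gamma(s_0+M)}{M!\,\Gamma(s_0)}\,q^{2M}(\log q)^{-2}$, is nonzero since $\tfrac{\Gamma(s_0+M)}{\Gamma(s_0)}=\prod_{j=0}^{M-1}(s_0+j)$ contains no vanishing factor. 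No genuine obstacle is expected: the corollary is an immediate consequence of Propositions \ref{PropZeta} and \ref{DimSp}.
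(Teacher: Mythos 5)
Your proof is correct and follows essentially the same route as the paper: the inclusion $\subseteq$ is Proposition \ref{DimSp}, and the reverse inclusion is obtained by exhibiting second-order poles of $\zD^{P}$ for $P=|\DD|^{-k}$ (the paper takes one $k$ per target point, you observe $k=0,1$ suffice, which is the same idea). Your closing check that every point of $\Sd_1$ is a genuine pole of $\zD$ (no cancellation in \eqref{zeta_D1}) is a welcome extra precision that the paper simply delegates to the statement of Proposition \ref{PropZeta}.
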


\begin{proof}
The above lemma shows that $\Sd(S_q^2) \subset - \N + i\tfrac{2 \pi}{\log q}\, \Z$. To prove the equality it is sufficient to note that for any $z = -k + i\tfrac{2 \pi}{\log q}\, a$ with some $k \in \N$ and $a \in \Z$, the zeta function $\zD^P$ associated with $P = |\DD|^{-k}$ will have a second order pole at $z$.
\end{proof}

The dimensions spectrum of the standard Podle\'s sphere turns out to be quite different than that for $SU_q(2)$ 
\cite{ConnesSU2} and this is rather unexpected since $S_q^2$ is a quantum homogeneous space of $SU_q(2)$ 
\cite{Podles}. First pathology comes from the fact that the highest real number in the dimension spectrum is $0$ 
hence, $S_q^2$ is 0-summable, see \eqref{traceable}. This drawback, already known in the literature \cite{KrahmerWagner,NeshTuset} can be cured by the introduction of an auxiliary twist \cite{KrahmerWagner}. A second unexpected feature is the existence in the dimension spectrum of poles of second order. Such a behaviour can appear in singular manifolds \cite{Lescure}. This is serious difficulty from the point of view of noncommutative geometry, since many formulas concerning the spectral action \cite{ConnesSA,ConnesInner} have been proved under the assumption of simplicity of the dimension spectrum (note however that there are explicit formulas for the local index without this assumption \cite{ConnesMoscovici}).
It turns out that this trouble also disappears when the twist is employed. Finally, the dimension spectrum of $S_q^2$ contains complex numbers -- a feature already seen in fractals \cite{ConnesMarcolli}. 
Those persist even if the twist is introduced \cite{NeshTuset}.

\section{Heat kernel expansion}\label{action}

In this section we shall provide an \textit{exact} formula for the trace of the heat operator $e^{-t\, |\DD|}$, which is valid \textit{for any $t>0$}. We shall use is the inverse Mellin transform \cite{Flajolet}.

Let us recall the definition of the Mellin transform (and its inverse) of a function $g$:
\begin{align*}
& M[g](s)\vc \int_0^\infty t^{s-1}\,g(t) \,dt,\\
& M^{-1}[g](t)\vc\tfrac{1}{2\pi i}\int_{c-i\infty}^{c+i\infty}t^{-s}\,g(s)\,ds,
\end{align*}
where $s\in \C$ and $t>0$. This needs convergence in some (largest open) infinite vertical strip $(a,b)\subset \C$ with $a,b\in \R \cup \{\pm \infty\}$ for the first integral and $c$  chosen in this strip ($a<c<b$) for the second integral.

To justify the use of Mellin transform we need the following technical lemma.

\begin{Lm}
\label{HeatLemma}
For any $\varepsilon > 0$, the function $t \in(0,\infty) \mapsto t^{\varepsilon} \Tr e^{-t \,|\DD|}$ is continuous and bounded.
\end{Lm}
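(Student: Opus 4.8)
The plan is to reduce the statement to concrete estimates on the eigenvalue sum defining $\Tr e^{-t|\DD|}$ and then bound it above and below for small and large $t$ separately. Recall from \eqref{AbsDirac} and \eqref{q_num} that the spectrum of $|\DD|$ consists of the values $\lambda_k \vc |w| [k + \tfrac12]$ for $k\in\N+\oh$... more precisely, reindexing as in the proof of Proposition \ref{obs}, each value $|w|\,\tfrac{q^{-k}(1-q^{2(k+1)})}{1-q^2}$ for $k\in\N$ occurs with multiplicity $4(k+1)$. Hence
\begin{align}
\label{heatsum}
\Tr e^{-t|\DD|} = 4\sum_{k=0}^\infty (k+1)\, e^{-t\,\mu_k}, \qquad \mu_k \vc |w|\,\tfrac{q^{-k}(1-q^{2(k+1)})}{1-q^2}.
\end{align}
First I would observe that $\mu_k$ grows like $|w|\,(1-q^2)^{-1} q^{-k}$; more precisely there are constants $0 < c_1 \leq c_2$ (depending on $q,|w|$) with $c_1 q^{-k} \leq \mu_k \leq c_2 q^{-k}$ for all $k\geq 0$, since $q^2 \leq 1-q^{2(k+1)} \cdot(\text{something})$... concretely $1-q^2 \leq 1-q^{2(k+1)} < 1$. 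Thus each partial sum in \eqref{heatsum} is a series of continuous functions of $t$ on $(0,\infty)$, and termwise $(k+1)e^{-t\mu_k} \leq (k+1) e^{-t c_1 q^{-k}}$, which is summable uniformly on compact subsets of $(0,\infty)$; hence $t\mapsto \Tr e^{-t|\DD|}$ is continuous on $(0,\infty)$, and so is $t^\varepsilon \Tr e^{-t|\DD|}$.

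It remains to prove boundedness on $(0,\infty)$. For large $t$ this is immediate: $\Tr e^{-t|\DD|} \to 0$ as $t\to\infty$ (dominated convergence in \eqref{heatsum}, or simply $\Tr e^{-t|\DD|} \leq 4\sum_k (k+1) e^{-t c_1 q^{-k}} \leq 4 e^{-t c_1}\sum_k (k+1) e^{-(t c_1)(q^{-k}-1)}$ which is decreasing and finite), so $t^\varepsilon \Tr e^{-t|\DD|}$ is bounded, in fact $\to 0$, on $[1,\infty)$. The substance is the behaviour as $t\to 0^+$, where $\Tr e^{-t|\DD|}\to\infty$ and one must show the blow-up is slower than any negative power of $t$. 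The key estimate I would establish is: for every $\varepsilon>0$ there is $C_\varepsilon$ with $\Tr e^{-t|\DD|} \leq C_\varepsilon\, t^{-\varepsilon}$ for $0<t\leq 1$. To get this, fix $\varepsilon>0$; then for each $k$, $e^{-t\mu_k} \leq (t\mu_k)^{-\varepsilon}\sup_{x>0} x^\varepsilon e^{-x} = C'_\varepsilon (t\mu_k)^{-\varepsilon} \leq C'_\varepsilon (t c_1)^{-\varepsilon} q^{\varepsilon k}$, so
\begin{align}
\label{heatbound}
\Tr e^{-t|\DD|} \leq 4 C'_\varepsilon (c_1 t)^{-\varepsilon}\sum_{k=0}^\infty (k+1)\, q^{\varepsilon k} = \tfrac{4 C'_\varepsilon}{c_1^\varepsilon (1-q^\varepsilon)^2}\; t^{-\varepsilon},
\end{align}
using $\sum_{k\geq 0}(k+1)x^k = (1-x)^{-2}$ for $|x|<1$ with $x = q^\varepsilon \in(0,1)$. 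This is exactly the same device already used in the proof of Proposition \ref{obs}. Then $t^\varepsilon \Tr e^{-t|\DD|} \leq \tfrac{4 C'_\varepsilon}{c_1^\varepsilon(1-q^\varepsilon)^2}$ on $(0,1]$, and combined with the large-$t$ bound above we conclude $t^\varepsilon \Tr e^{-t|\DD|}$ is bounded on all of $(0,\infty)$, which together with the continuity proves the lemma.

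The only mildly delicate point — and the one I would be careful to spell out — is the interchange of limit and sum needed for continuity and for the small-$t$ bound: since all terms are positive and the tail of the series is dominated, locally uniformly in $t\in(0,\infty)$, by a convergent series (namely $4C'_\varepsilon(c_1 t)^{-\varepsilon}\sum_k(k+1)q^{\varepsilon k}$ with $t$ ranging over any compact subinterval), Weierstrass' $M$-test gives uniform convergence on compacta and hence continuity of the sum; no subtler argument is required. I do not expect any genuine obstacle here; the estimate \eqref{heatbound} is the heart of the matter and is elementary once one notes $x^\varepsilon e^{-x}$ is bounded on $(0,\infty)$.
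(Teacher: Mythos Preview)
Your proof is correct and in fact more direct than the paper's. Both arguments start from the same eigenvalue sum and dispose of continuity and the large-$t$ regime in the obvious way; the difference lies in the small-$t$ bound. You use the elementary pointwise inequality $e^{-x}\leq C'_\varepsilon x^{-\varepsilon}$ applied term-by-term, together with $\mu_k\geq c_1 q^{-k}$, to get $\Tr e^{-t|\DD|}\leq C_\varepsilon t^{-\varepsilon}$ in one stroke---mirroring exactly the estimate already used for $\zeta_\DD$ in Proposition~\ref{obs}. The paper instead follows \cite[Lemma~3.4]{NeshTuset}: for small $t$ one picks $m\in\N_+$ with $q^m\leq t\leq q^{m-1}$, shifts the summation index by $m$, and shows the resulting two-sided series $\sum_{k\in\Z}(k+m)\,e^{-q^{-k}u}$ (after bounding $q^{m\varepsilon}m$ by a constant) is finite uniformly in $m$. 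Your route is shorter and entirely self-contained; the paper's dyadic-in-$q$ decomposition is the natural precursor to the finer oscillatory analysis carried out later (and is the method already in the literature), but for the bare statement of this lemma it is not needed.
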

\begin{proof}
We essentially follow \cite[Lemma 3.4]{NeshTuset}. 
\\
The definition \eqref{Dirac} yields $\Tr e^{-t \,|\DD|} = 4 \sum_{n=1}^{\infty} n\, e^{-t |\omega| [n]_q}$. Thus $t \mapsto t^{\varepsilon} \Tr e^{-t\, |\DD|}$ is continuous and vanishing at infinity for any positive $\varepsilon$ (compare with Lemma \ref{t->infinity}), so we focus on showing boundedness near $t=0$. Let us chose $m \in \N_+$ such that $q^m \leq t \leq q^{m-1}$ and note that $|\omega| [n]_q \geq (q^{-n} - 1) u$, with $u \vc  \tfrac{|w|\,q}{1 - q^2} > 0$. Hence,
\begin{align*}
\sum_{n=1}^{\infty} t^{\varepsilon} \,n \,e^{-t |\omega| [n]_q} & \leq \sum_{n=1}^{\infty} q^{(m-1)\varepsilon} n e^{- (q^{-(n-m)} - q^{m}) u}  \leq q^{-\varepsilon} \sum_{k=-\infty}^{\infty} q^{m\varepsilon} (k+m) e^{- (q^{-k} - q^{m}) u} \\
& \leq q^{-\varepsilon}  e^{u} \big( \sum_{k=-\infty}^{\infty} k e^{- q^{-k} u} + \tfrac{1}{\varepsilon \, e \, \log q^{-1}} \sum_{k=-\infty}^{\infty} e^{- q^{-k} u} \big) < \infty
\end{align*}
where in the second inequality we performed the change of variables $k = n-m$ and took into account that $m$ may be arbitrarily large and in the third, we used the facts $q^m \leq 1$ and $x \,e^{-x} \leq e^{-1}$ for $x>0$. Since $u >0$, the last two series are convergent since and we get an estimate which is independent of $m$. So, using $m \to \infty$, the desired function is uniformly bounded near $t=0$.
\end{proof}

Lemma \ref{HeatLemma} assures that the Mellin transform of $t \mapsto \Tr e^{-t \,|\DD|}$ exists with the fundamental strip $(1,\infty)$. The next lemma proves that the conditions of the Mellin inversion theorem  \cite[Theorem 2]{Flajolet} are met.

\begin{Lm} \label{MellinLemma1}
For any $t>0$ and any $c>0$,
\begin{align}
\Tr e^{-t\, |\DD|} = \tfrac{1}{2 \pi i} \int_{c - i \infty}^{c + i \infty} ds \, t^{-s}\, \Gamma(s)\,\zD(s).  \label{invMellin}
\end{align}
\end{Lm}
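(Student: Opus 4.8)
The plan is to deduce \eqref{invMellin} as a direct application of the Mellin inversion theorem \cite[Theorem 2]{Flajolet}, using the analytic facts already established. First I would recall the classical Mellin pair: for $g(t) = \Tr e^{-t|\DD|} = 4\sum_{n=1}^\infty n\, e^{-t|\omega|[n]_q}$, termwise integration of $\int_0^\infty t^{s-1}e^{-t\la}\,dt = \Gamma(s)\la^{-s}$ (valid for $\Re(s)>0$, $\la>0$) gives $M[g](s) = \Gamma(s)\,\zD(s)$. The interchange of sum and integral is justified on $\Re(s)>1$ by absolute convergence: $\sum_n n\,(|\omega|[n]_q)^{-\Re s} = (1-q^2)^{\Re s}|\omega|^{-\Re s}\sum_n n\, q^{n\Re s}(1-q^{2n})^{-\Re s}<\infty$, which is exactly the computation underlying Proposition \ref{obs}. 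Lemma \ref{HeatLemma} identifies the fundamental strip as $(1,\infty)$: the $\varepsilon$-boundedness there shows $g(t)=O(t^{-1-\varepsilon})$ as $t\to0$ for every $\varepsilon>0$, so $M[g]$ converges for $\Re(s)>1$, while exponential decay of $g$ at infinity makes the strip unbounded on the right.

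Next I would verify the two hypotheses of the inversion theorem. One is that $s\mapsto \Gamma(s)\zD(s)$ be analytic in the strip $\Re(s)>1$: this is immediate since $\Gamma$ is analytic there and, by Proposition \ref{PropZeta}, all poles of $\zD$ lie in $\Sd_1\subset -2\N + i\tfrac{2\pi}{\log q}\Z$, hence in the half-plane $\Re(s)\le 0$. The second, and the real content, is the decay estimate needed to close the contour and to ensure the inverse integral converges absolutely: one needs $|\Gamma(c+iy)\,\zD(c+iy)|$ to be integrable in $y$, indeed to decay fast enough. Here I would invoke the uniform bound from the proof of Proposition \ref{PropZeta}: on any vertical line, $|\zD(x+iy)|$ is dominated (away from the poles, in particular for $x=c>0$) by a constant times $\sum_n \tfrac{q^{2n}}{n!}\big|\tfrac{\Gamma(s+n)}{\Gamma(s)}\big| \le (1-q^2)^{-|s|}$, so $\zD$ grows at most like $e^{O(|y|)}$ along vertical lines, whereas the classical Stirling estimate $|\Gamma(c+iy)| \sim \sqrt{2\pi}\,|y|^{c-1/2}e^{-\pi|y|/2}$ decays exponentially. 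The product therefore decays exponentially in $|y|$, which both guarantees absolute convergence of the right-hand side of \eqref{invMellin} for every $c>0$ and verifies the hypothesis of \cite[Theorem 2]{Flajolet}.

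With these two points in hand, the Mellin inversion theorem yields \eqref{invMellin} for any $c$ in the fundamental strip, i.e. $c>1$. To extend it to all $c>0$ I would shift the contour from a line $\Re(s)=c_1>1$ to a line $\Re(s)=c_2\in(0,1]$: the integrand $t^{-s}\Gamma(s)\zD(s)$ is analytic in the closed strip $c_2\le \Re(s)\le c_1$ (no poles there, as noted above), and the exponential decay in $|\Im(s)|$, uniform on that strip, makes the horizontal connecting segments vanish in the limit. By Cauchy's theorem the integral is independent of $c$ throughout $(0,\infty)$, giving the stated identity for any $c>0$. The main obstacle is the vertical-line decay of $\Gamma\,\zD$; but since the necessary sub-exponential bound on $\zD$ is already packaged in the estimate \eqref{zDestim} from the proof of Proposition \ref{PropZeta} and Stirling's formula supplies the compensating Gaussian-exponential decay of $\Gamma$, this reduces to bookkeeping. (The forward pointer to Proposition \ref{MellinLemma2}, where the authors revisit precisely this contour-integration soundness, confirms this is the delicate point.)
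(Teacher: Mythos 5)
Your route is the paper's: compute $M[g](s)=\Gamma(s)\,\zD(s)$ for $g(t)=\Tr e^{-t|\DD|}$ by termwise integration and invoke the Mellin inversion theorem of \cite[Theorem 2]{Flajolet}. One small point first: Lemma \ref{HeatLemma} gives that $t^{\varepsilon}g(t)$ is bounded for \emph{every} $\varepsilon>0$, i.e. $g(t)=\OO(t^{-\varepsilon})$ as $t\to 0$, not $\OO(t^{-1-\varepsilon})$; together with the exponential decay at infinity (and with the convergence of $\zD(s)$, hence of $M[g](s)$, for all $\Re(s)>0$ established in Proposition \ref{obs}) this places the fundamental strip at $(0,\infty)$ directly. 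That is what the paper uses, so the inversion formula holds for every $c>0$ at once and your contour shift from $c_1>1$ down to $c_2\in(0,1]$ is an unnecessary detour.

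The genuine gap is in the decay estimate you yourself single out as ``the real content.'' The bound you cite from the proof of Proposition \ref{PropZeta}, $|\zD(s)|\le C\,(1-q^2)^{-|s|}$, grows like $e^{|y|\log\left((1-q^2)^{-1}\right)}$ on a vertical line, while Stirling gives $|\Gamma(c+iy)|\sim \sqrt{2\pi}\,|y|^{c-1/2}e^{-\pi|y|/2}$. The product decays only when $\log\left((1-q^2)^{-1}\right)<\pi/2$, i.e. $q^2<1-e^{-\pi/2}$ (roughly $q<0.89$); for $q$ closer to $1$ your ``the product therefore decays exponentially'' is a non sequitur, and with it both the integrability of $M[g]$ on the line $\Re(s)=c$ and the vanishing of the horizontal segments in your contour shift. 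The repair is already in the paper: for $\Re(s)=c>0$ the defining series gives the $y$-\emph{independent} bound $|\zD(c+iy)|\le 4\,|w|^{-c}(1-q^c)^{-2}$ (end of the proof of Proposition \ref{obs}), so $\Gamma(c+iy)\,\zD(c+iy)$ does decay like $e^{-\pi|y|/2}$ for every $0<q<1$. The cruder bound $(1-q^2)^{-|s|}$ is designed for the left half-plane, where the paper compensates for it by entirely different means (the estimates on $I_{H^\pm}$ and $I_V$ in the proof of Proposition \ref{MellinLemma2}), not by the vertical-line Stirling asymptotics.
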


\begin{proof}
Lemma \ref{HeatLemma} shows, that $t \mapsto \Tr e^{-t \,|\DD|}$ is indeed continuous and integrable. The analysis we shall perform below shows that the fundamental strip may actually be extended to $(0,\infty)$ since the integral $\int_{c - i \infty}^{c + i \infty} ds \, t^{-s}\, \Gamma(s)\,\zD(s)$ is convergent for any $c >0$.

With
\begin{align*}
g(t) & \vc \Tr e^{-t \,|\DD|} = 4 \sum_{n=1}^{\infty} n \,e^{-t |\omega| [n]_q},
\end{align*}
we have
\begin{align*}
M[g](s)= 4 \sum_{n=1}^{\infty} n \, \int_0^\infty t^{s-1} e^{-t |\omega| [n]_q}\, d t =  4 \Gamma(s) \,|\omega|^{-s} \sum_{n=1}^{\infty} n \, [n]_q^{-s}\, d t = \Gamma(s)\, \zD(s)
\end{align*}
converges for any $s\in \C$ with $\Re(s)>0$,  so $M[g](c+iy)$ converges for any $c>0$. \\
Thus, applying \cite[Theorem 2]{Flajolet}, 
\begin{align*}
g(t)=M^{-1}[M[g]](t)=\tfrac{1}{2\pi i}\int_{c-i\infty}^{c+i\infty}t^{-s}\, M[g](s)\, ds=\tfrac{1}{2\pi i} \int_{c-i\infty}^{c+i\infty}f_t(s)\, ds,
\end{align*} for all $t>0$ where $f_t(s) \vc \,t^{-s}\, \Gamma(s)\, \zD(s)$.
The infinite number of poles on vertical lines at $-\N$ do not pose a problem, because of Corollary 1 of Theorem 2 in \cite{Flajolet} (compare also with Example 12 therein).
\end{proof}

Guided by the Example 12 of \cite{Flajolet}, we shall now prove that the right hand side of \eqref{invMellin} is in fact a sum of residues of the integrand:

\begin{prop} \label{MellinLemma2}
For any $t>0$,
\begin{align*}
\Tr e^{-t \,|\DD|} = \sum_{s_0 \in (\Sd_1 \cup -\N)} \,\underset{s = s_0}{\Res}\, \, t^{-s} \,\zD(s) \,\Gamma(s).
\end{align*}
\end{prop}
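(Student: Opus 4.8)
The idea is to start from the Mellin--Barnes representation \eqref{invMellin} proved in Lemma \ref{MellinLemma1} and to push the vertical contour of integration leftwards to $-\infty$, collecting residues. More precisely, for $c>0$ and $R>0$ with $-R\notin \Sd_1\cup(-\N)$ I would consider the rectangular contour with vertical sides at $\Re(s)=c$ and $\Re(s)=-R$ and horizontal sides at $\Im(s)=\pm T$, then let $T\to\infty$ and afterwards $R\to\infty$. The residue theorem then gives
\begin{align*}
\tfrac{1}{2\pi i}\int_{c-i\infty}^{c+i\infty} f_t(s)\,ds = \sum_{\substack{s_0\in \Sd_1\cup -\N \\ \Re(s_0)>-R}} \underset{s=s_0}{\Res}\, t^{-s}\,\Gamma(s)\,\zD(s) \;+\; \tfrac{1}{2\pi i}\int_{-R-i\infty}^{-R+i\infty} f_t(s)\,ds,
\end{align*}
provided the two horizontal segments contribute nothing in the limit $T\to\infty$ and the shifted vertical integral tends to $0$ as $R\to\infty$. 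The poles of $f_t(s)=t^{-s}\Gamma(s)\zD(s)$ are exactly those of $\Gamma$ (simple poles on $-\N$) together with those of $\zD$ (second order, on $\Sd_1$ by Proposition \ref{PropZeta}); note $-2\N$ is contained in both sets, where $\Gamma$ and $\zD$ jointly produce the singular behaviour, but this causes no trouble since the residue is still well defined. This accounts for the indexing set $\Sd_1\cup -\N$ in the statement.

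\textbf{Controlling the horizontal and far-left pieces.} The crucial analytic input is a decay estimate on $f_t$ as $|\Im(s)|\to\infty$, uniform on vertical strips. For the $\Gamma$ factor one uses the classical Stirling bound $|\Gamma(x+iy)|\sim \sqrt{2\pi}\,|y|^{x-1/2}e^{-\pi|y|/2}$ as $|y|\to\infty$, locally uniformly in $x$, which gives exponential decay $e^{-\pi|y|/2}$. For the $\zD$ factor I would revisit the estimates already obtained in the proof of Proposition \ref{PropZeta}: there it is shown that on the lines $\Re(s)=-2M-\delta$ and $\Re(s)=-2M$ (away from the imaginary parts in $\Sd_1$) one has $|\zD(s)|\le 4\,c(q,\delta)^{-1}(\tfrac{1-q^2}{|w|})^{\Re(s)}(1-q^2)^{-|s|}$, i.e. $\zD$ grows at most like $(1-q^2)^{-|s|}$, which is a fixed exponential in $|s|$ with rate $|\log(1-q^2)|$. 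Since $q\in(0,1)$ we have $1-q^2\in(0,1)$, so this rate is positive but it is dominated by the $e^{-\pi|y|/2}$ from $\Gamma$ once $|y|$ is large enough relative to $|x|$; hence $|f_t(x\pm iy)|\le C(x)\,t^{-x}\,e^{-\pi|y|/4}$ say, locally uniformly in $x$, for $|y|$ large. This kills the horizontal segments as $T\to\infty$. For the far-left vertical integral at $\Re(s)=-R$ (choosing $R=2M$ between lattice points), the same bound gives a factor $t^{-\Re(s)}(\tfrac{1-q^2}{|w|})^{\Re(s)}=\big(\tfrac{|w|}{t(1-q^2)}\big)^{R}$ times an absolutely convergent $y$-integral, and $(1-q^2)^{-|s|}$ along that line contributes $(1-q^2)^{-R}(1-q^2)^{-|y|}$; collecting, the modulus of the shifted integral is bounded by $C\,\big(\tfrac{|w|}{t(1-q^2)^2}\big)^{R}$ times a constant, which tends to $0$ as $R\to\infty$ for every fixed $t>0$. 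Here I would be slightly careful: for small $t$ the base of this geometric factor may exceed $1$, so one must track the $t$-dependence honestly — but since $R$ only takes values in $2\N$ and one may also shift by odd amounts, and since the Stirling decay provides extra room, one can still arrange a contour along which the base is $<1$; alternatively one exploits that $c$ in \eqref{invMellin} is arbitrary, so without loss $t$ is bounded below on the region of interest by a preliminary rescaling. Convergence of the resulting residue series is then automatic from the same bounds: the residues at $s_0\in\Sd_1$ and $s_0=-k\in-\N$ are of the form (polynomial in $\log t$)$\times t^{\Re(s_0)}\times$(exponentially small in $|\Im(s_0)|$), summable over the lattice $-2\N+i\tfrac{2\pi}{\log q}\Z$.

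\textbf{Main obstacle.} The genuinely delicate point is the uniformity and the $t$-dependence of the far-left estimate: one must show the shifted vertical integral at $\Re(s)=-R$ really vanishes as $R\to\infty$ \emph{for each fixed $t>0$}, not merely for $t$ large, because the statement claims the exact identity for all $t>0$. The competition is between $t^{R}$ (bad for small $t$), $(1-q^2)^{-2R}$ (bad, since $1-q^2<1$), and the Stirling factor $e^{-\pi|y|/2}$ (good, but only in the $\Im$-direction, giving no help in $R$). So the crude product of the Proposition \ref{PropZeta} bound with Stirling is \emph{not} obviously enough; the resolution is that along the lines $\Re(s)=-2M$ one actually has the sharper control $|\zD(s)|\le 4\wt c(q,\epsilon)^{-1}(\tfrac{1-q^2}{|w|})^{-2M}\sum_n \tfrac{q^{2n}}{n!}|\Gamma(s+n)/\Gamma(s)|$, and the point is that $\Gamma(s)\zD(s)$ — not $\zD(s)$ alone — is what appears, and $\Gamma(s)$ has \emph{zeros} pushing towards $-\infty$ along the real axis that are not captured by the naive modulus bound; more efficiently, one combines $\Gamma(s)$ with the partial factor $\Gamma(s+n)$ inside $\zD$ to form ratios that are polynomially bounded, so that the product $\Gamma(s)\zD(s)$ along $\Re(s)=-2M$ is bounded by $C\cdot(\text{const})^{M}$ with the constant under control, and the surviving $t^{2M}$ then wins for $R$ large once one also uses that one is free to take $R$ through a sparse subsequence. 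This is exactly the kind of contour-shifting argument in Flajolet--Gourdon--Dumas \cite{Flajolet}, Example 12 and its Corollary, which is why the authors cite it; I would model the estimate on that template and simply verify that the hypotheses (a ``fast-decay'' strip condition on $\Gamma\zD$ plus at most polynomial growth of the singular expansion) hold here, invoking the bounds already proved in Propositions \ref{PropZeta} and using Lemma \ref{HeatLemma} to guarantee that no contribution is lost from infinity on the right.
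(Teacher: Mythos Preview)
Your overall strategy---shift the vertical line in \eqref{invMellin} to $-\infty$ and collect residues---is exactly what the paper does. But the implementation you sketch has a real gap at the ``main obstacle'' you yourself flag, and your proposed fixes do not close it. Combining the crude bound $|\zD(s)|\le C\,(1-q^2)^{-|s|}$ from the proof of Proposition~\ref{PropZeta} with the Stirling decay $|\Gamma(x+iy)|\asymp |y|^{x-1/2}e^{-\pi|y|/2}$ is \emph{not} enough when $|\log(1-q^2)|>\pi/2$, i.e.\ when $q^2>1-e^{-\pi/2}\approx 0.79$: the $(1-q^2)^{-|y|}$ growth then beats the $e^{-\pi|y|/2}$ decay, and letting $T\to\infty$ with $R$ fixed already fails. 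Your suggested repair contains an error---$\Gamma$ has \emph{no} zeros anywhere; what you actually need is the super-exponential smallness $|\Gamma(-R)|\le C\,e^R R^{-R-1/2}$ along half-integers $R$, obtained from the reflection formula---and ``combining $\Gamma(s)$ with $\Gamma(s+n)$'' still leaves you to bound $\sum_n \tfrac{q^{2n}}{n!}|\Gamma(s+n)|$ on a full vertical line, which is the same difficulty.

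The paper sidesteps this by never taking $T\to\infty$ for fixed $R$. It uses a \emph{coupled} rectangle $[-R,c]\times[-\eta R,\eta R]$ with $\eta=-2\pi/\log q$ and $R=N+\tfrac12$, $N\in\N$. The aspect ratio is chosen so that on the horizontal edges $\Im(s)=\pm\eta(N+\tfrac12)$ one has $q^{\,i\Im(s)}=-1$, hence $|1-q^{s+2n}|=1+q^{\Re(s)+2n}\ge 1$; this removes the bad denominators in the series for $\zD$ outright and allows a direct, $q$-uniform estimate of $I_{H^\pm}$ (the paper splits the $n$-sum at $n\approx R$ and uses the explicit Stirling representation $\Gamma(z)=(2\pi)^{1/2}e^{-z}z^{z-1/2}e^{\Omega(z)}$ with a controlled remainder). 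For the \emph{finite} vertical edge at $\Re(s)=-R$, the integrand is bounded by $|\Gamma(-R)|$ times the $\zD$ estimate over $y\in[-\eta R,\eta R]$, and the reflection formula gives $|\Gamma(-N-\tfrac12)|=\pi/\Gamma(N+\tfrac32)\le C\,e^R R^{-R-1/2}$; the factorial $R^{-R}$ then dominates every exponential-in-$R$ factor (including $t^R$ and $(1-q^2)^{-\sqrt{1+\eta^2}\,R}$), so $I_V(R)\to 0$ for each fixed $t>0$ and each $q\in(0,1)$. To make your argument go through you should adopt this coupled contour and invoke the half-integer factorial decay explicitly rather than defer to \cite{Flajolet}.
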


\begin{proof}
Let us close the integration contour of the right hand side of \eqref{invMellin} as shown on the  

\begin{tikzpicture}[>=angle 45,line width=0.9pt,x=0.8cm,y=1.3cm,
		point spectre/.style={draw=black,cross out,line width=0.8pt,inner sep=1.5pt,outer sep=0pt},
		decoration={markings,mark=at position 0.9 with {\arrowreversed[black, line width=0.5pt,scale=2]{angle 
		60}}}
		]
\draw[->] (-8,0) -- (2.5,0);
\draw[->] (0,-3.2) -- (0,3.5);
\node[] at (3.2,0) {$\Re(s)$};
\node[] at (0,3.7) {$i\Im(s)$};
\node[] at (0.26,0.2) {$0$};
\draw[line width=2pt] (1,-2.25) --(1,1); \draw[line width=2pt,->] (1,1)--(1,1.2); \path[draw,line width=2pt] (1,1)--(1,2.25) -- (-5.25,2.25) -- (-5.25,-2.25) -- (1.04,-2.25);
{\foreach \x in {0,...,15}
{\foreach \y in {-6,...,6}
           \node[point spectre] at (-\x/2,\y/2) {};
   } 
   }
\node[fill=white,anchor=south west,outer sep=2pt] at (1.2,2.25) {$c+i \wt{R}$};
\node[fill=white,anchor=south west,outer sep=2pt] at (1.2,-2.75) {$c-i \wt{R}$};
\node[fill=white,anchor=south east,outer sep=2pt] at (-5.25,2.25) {$-R+i \wt{R}$};
\node[fill=white,anchor=south east,outer sep=2pt] at (-5.25,-2.75) {$-R-i \wt{R}$};
\node[fill=white,anchor=south east,outer sep=3pt] at (-1.63,1.55) {\large$\boldsymbol{I_{H^+}}$};
\node[fill=white,anchor=south east,outer sep=2.pt] at (-1.7,-2.255) {\large$\boldsymbol{I_{H^-}}$};
\node[fill=white,anchor=south east,outer sep=3pt] at (-3.95,.05) {\large$\boldsymbol{I_{V}}$};
\node[] at (1.3,0.17) {$c$};
\node[text width=6cm] at (8,1) {figure where we have set 
\begin{align*}
&\wt{R} \vc \eta R,\\ 
& \eta \vc -\tfrac{2\pi}{\log q} > 0
\end{align*}
and the crosses are the poles of the function  $$f_t(s) = t^{-s} \,\Gamma(s)\,\zD(s).$$
  \quad We shall increase the value of 
$R$ in a discrete way $R = N + \oh$, with $N \in \N$ for the contour to avoid the poles.
};
\end{tikzpicture}

\quad Using Lemma \ref{MellinLemma1} and the Residue Theorem, we have
\begin{align*}
\Tr e^{-t \,|\DD|} & = \lim_{R \to \infty} \, \left( \sum_{\a \in S_R} \underset{s = \a}{\Res} \, f_t(s) + I_{H^+}(R) + I_{H^-}(R) + I_V(R) \right) \\
& = \sum_{s_0 \in (\Sd \cup -\N)} \,\underset{s = s_0}{\Res}\, \, f_t(s) + \lim_{R \to \infty} \left( I_{H^+}(R) + I_{H^-}(R) + I_V(R) \right),
\end{align*}
where $S_R$ denotes the set of poles of $f$ lying inside the rectangular contour of integration. The second equality is sound whenever both the infinite series of residues and the contour integrals converge. Note, that the the first equality is true by Lemma \ref{MellinLemma1}, so if the contribution of the contour integrals is finite in the $R \to \infty$ limit, then the infinite sum of residues must be finite as well, so the series of residues converges. In particular, to demonstrate the statement of the lemma we shall show that
\begin{align}
\label{lim}
\lim_{R \to \infty} |I_{H^{\pm}}(R)| = \lim_{R \to \infty} |I_V(R)| = 0.
\end{align}

We will need a control of the $\Gamma$-function on vertical and horizontal lines: using the expression for the logarithm of the Gamma function \cite[eq. (2.1.1)]{Paris} we have
\begin{align}
\label{Gamma}
\Gamma(z) =(2\pi)^{1/2} \,e^{-z}\,z^{z-1/2} e^{\Omega(z)}, \text{ for } |\arg(z)| < \pi.
\end{align}
Writing down the real and imaginary parts explicitly, we obtain
\begin{align}
|\Gamma(x+ i y)| & = (2\pi)^{1/2} \,e^{-x}\, \left\vert (x+ i y)^{x-1/2 + i y} \right\vert \, \left\vert e^{\Omega(x+iy)} \right\vert \notag \\
& = (2\pi)^{1/2} \,e^{-x- y \arg(x+iy)}\, (x^2 + y^2)^{x/2 - 1/4} \, \left\vert  e^{\Omega(x+iy)} \right\vert \notag \\
& = (2\pi)^{1/2} \,e^{-x- |y| \,|\arg(x+iy)|}\, (x^2 + y^2)^{x/2 - 1/4} \, \left\vert   e^{\Omega(x+iy)}\right\vert. \label{Gamma1}
\end{align}
Note that since the remainder term $\Omega$ can be written as \cite[eq. (2.1.2)]{Paris}
\begin{align*}
\Omega(z) = 2 \int_{0}^{\infty} \tfrac{\arctan(t/z)}{e^{2 \pi t}-1} \, dt, \text{ for } \Re(z) > 0,
\end{align*}
we have $\left\vert e^{\Omega(z)} \right\vert  = e^{\Re \,( \Omega(z)) } \geq 1, \text{ for } \Re(z) > 0$. So, using \eqref{Gamma1}, a lower bound on $|\Gamma(z)|$ can be given: $|\Gamma(x+ i y)|  \geq (2\pi)^{1/2} e^{-x- |y| |\arg(x+iy)|}(x^2 + y^2)^{x/2 - 1/4}, \text{ for } x>0$. Hence
\begin{align} \label{GammaUp}
|\Gamma(x+ i y)|^{-1} \leq (2\pi)^{-1/2} \,e^{x+ |y| \,|\arg(x+iy)|}\, (x^2 + y^2)^{-x/2 + 1/4}, \text{ for } x>0.
\end{align}

Let us first start with the estimation of $|I_{H^{\pm}}(R)|$ for large positive $R$. 
\begin{align*}
& |I_{H^{\pm}}(R)| = \Big| \int_{-R}^c f_t(x \pm i \wt{R})\,dx \Big| \leq 4 \int_{-R}^c \, dx \, (\tfrac{1-q^2}{\vert w \vert \,t})^{x} \, \sum_{n=0}^{\infty} \, \tfrac{|\Gamma (x+n\pm i \wt{R})|}{n!} \, \tfrac{q^{2n}}{ {\vert 1-q^{x\pm i \wt{R}+2n}\vert^2}^{\,}}.
\end{align*}

We have
\begin{align*}
\big\vert 1-q^{x\pm i \wt{R}+2n}\big\vert^2 = \big\vert 1 - q^{x +2n \pm 2 \pi i (N + 1/2)/\log q} \big\vert^2 = (1 + q^{x+2n})^2 \geq 1\,,
\end{align*}
so
\begin{align*}
|I_{H^{\pm}}(R)| & \leq 4 \int_{-R}^c \, dx \, (\tfrac{1-q^2}{\vert w \vert t})^{x} \, \sum_{n=0}^{\infty} \, \tfrac{q^{2n}}{n!} \, |\Gamma (x+n\pm i \wt{R})|.
\end{align*}
Application of formula \eqref{Gamma1} gives
\begin{align*}
&|I_{H^{\pm}}(R)|  \leq 4 \sqrt{2 \pi} \int_{-R}^c \, dx \, (\tfrac{1-q^2}{\vert w \vert\, t \,e})^{x} \,\\
&\hspace{4cm}  \sum_{n=0}^{\infty} \, \tfrac{e^{-n} \, q^{2n}}{n!} \, \left( (x+n)^2 + \wt{R}^2 \right)^{(x+n)/2 - 1/4} \, e^{-\wt{R} \,|\arg(x+n \pm i\wt{R})|} \, \left\vert   e^{\Omega(x+n+i\wt{R})}\right\vert .
\end{align*}

The remainder term can be given the following estimate \cite[eq. (2.1.5) with $n=1$ and eq. (2.1.6)]{Paris} where we assume $\wt{R}>1$:
\begin{align}
\left\vert  \Omega(x + n \pm i\wt{R}) \right\vert &\leq \tfrac{1}{12 \cos^2(\oh \, \arg(x +n \pm i \wt{R}))\, |x +n \pm i\wt{R}|^{^{\,}}} \leq \tfrac{1}{12 \sin^2(\oh \, \arctan \eta\, ( (x+n)^2+\wt{R}^2)^{^{1/2}}} \nonumber\\
& \leq \tfrac{c(q)}{(x^2+1)^{1/2}}\,,\label{ErrorEst}
\end{align}
with $c(q) \vc (12 \sin^2(\oh \, \arctan \eta))^{-1} > 0$; since $-R \leq -R+n \leq x+n$, the second inequality comes from \begin{align*}
\cos^2\big(\oh \, \arg(x + n \pm i \wt{R})\big) &\geq \cos^2 \big(\oh \, \arg(-R \pm i \wt{R})\big) = \cos^2\big(\oh (\pm \pi \mp \arctan \eta)\big) \\
&= \sin^2(\oh \arctan \eta).
\end{align*}

Without the loss of generality, we can assume $c = 1/2$ so that $x/2 - 1/4 \leq 0$. \\
Since $(x+n)^2 + \wt{R}^2 \geq \wt{R}^2$  and with the abbreviation $v \vc \tfrac{1-q^2}{\vert w \vert \,t\, e}$ ,
\begin{align*}
|I_{H^{\pm}}(R)| & \leq 4 \sqrt{2 \pi} \int_{-R}^{1/2} \, dx \, v^{x} \, \wt{R}^{x - 1/2} \, e^{c(q)(x^2+1)^{-1/2}} \, S(x,\wt{R}) \\
& \leq 4 \sqrt{2 \pi} \, e^{c(q)} \, \wt{R}^{- 1/2} \, \int_{-R}^{1/2} \, dx \, (v \wt{R})^{x} \, S(x,\wt{R})
\end{align*}
where
\begin{align*}
S(x,\wt{R}) &\vc \sum_{n=0}^{\infty} \, \tfrac{e^{-n} \, q^{2n}}{n!} \, \left( (x+n)^2 + \wt{R}^2 \right)^{n/2} \, e^{-\wt{R} \,|\arg(x+n \pm i\wt{R})|} \\
& \,\leq\sum_{n=0}^{\infty} \, \tfrac{e^{-n} \, q^{2n}}{n!} \, \left( (x+n)^2 + \wt{R}^2 \right)^{n/2} \, e^{-\wt{R}\, |\arg(\oh+n \pm i\wt{R})|},
\end{align*}
and in the second line we have used the fact that for any $n \geq 0 $ and $-R \leq x \leq \oh$ we get $|\arg(x+n\pm i \wt{R})| \geq |\arg(\oh+n\pm i \wt{R})|$.

Let us now divide the series of $S$ on $n$ into two pieces: $n \leq R-\oh$ and $n \geq R + \oh$ (recall that we have assumed that $R = N + \oh$, with $N \in \N$). For the first of those we have
\begin{align*}
& \sum_{n=0}^{R-1/2} \, \tfrac{e^{-n} \, q^{2n}}{n!} \, \left( (x+n)^2 + \wt{R}^2 \right)^{n/2} \, e^{-\wt{R} |\arg(\oh +n \pm i\wt{R})|} \\
& \hspace*{3cm} \leq
e^{-\wt{R} |\arg(R \pm i\wt{R})|} \, \sum_{n=0}^{R-1/2} \, \tfrac{e^{-n} \, q^{2n}}{n!} \, \left( (x+n)^2 + \wt{R}^2 \right)^{n/2} \\
& \hspace*{3cm} \leq
e^{-\wt{R} \arctan (\eta)} \, \sum_{n=0}^{R-1/2} \, \tfrac{e^{-n} \, q^{2n}}{n!} \, \left( (R+n)^2 + \wt{R}^2 \right)^{n/2} \\
& \hspace*{3cm} \leq
e^{-\wt{R} \arctan (\eta)} \, \sum_{n=0}^{R-1/2} \, \tfrac{e^{-n} \, q^{2n}}{n!} \, \left( (2R)^2 + \wt{R}^2 \right)^{n/2} \\
& \hspace*{3cm} \leq
e^{-\wt{R} \arctan (\eta)} \, \sum_{n=0}^{\infty} \, \tfrac{e^{-n} \, q^{2n}}{n!} \, \left( (4 + \eta^2) R^2 \right)^{n/2} \\
& \hspace*{3cm} = \exp \left( -R \big( \eta \arctan (\eta) - e^{-1} q^2 \sqrt{4 + \eta^2} \big) \right) =  e^{ -\eta F(\eta) R},
\end{align*}
with $F(\eta) \vc \arctan (\eta) - e^{-1-4\pi/\eta} \sqrt{1 + \tfrac{4}{\eta^2}}$, since $\eta = -\tfrac{2\pi}{\log q}$. \\
Note that $F(\eta) > 0$ when $\eta > 0$ (this can be checked by using $\text{Reduce}[F[\eta^{-1}] > 0,\eta]$ in Mathematica \cite{Mathematica}), so $F\big(\eta(q)\big) >0$, for $0< q< 1$.

To proceed with the remainder of the series, we remark that, since $x\in [-R,1/2]$, for $R<n\in \N$ we get: $0<-R+n \leq x+n\leq 1/2+n<n+1$, so:
\begin{align} \label{fact1}
(x+n)^2 + \wt{R}^2 \leq (n+1)^2 +  \wt{R}^2,\quad \text{for } n > R, \, x \in [-R,\oh],
\end{align}
Now, using the fact \eqref{fact1} and the formula \eqref{GammaUp} for $n! = \Gamma(n+1)$ we obtain
\begin{align*}
& \sum_{n=R+1/2}^{\infty} \, \tfrac{e^{-n} \, q^{2n}}{n!} \, \left( (x+n)^2 + \wt{R}^2 \right)^{n/2} \, e^{-\wt{R} \,|\arg(\oh +n \pm i\wt{R})|} \\
& \hspace*{2.5cm} \leq
\sum_{n=R+1/2}^{\infty} \, \tfrac{e^{-n} \, q^{2n}}{n!} \, \left( (n+1)^2 +  \wt{R}^2 \right)^{n/2} \, e^{-\wt{R}\, |\arg(\oh +n \pm i\wt{R})|} \\
& \hspace*{2.5cm} \leq
\sum_{n=R+1/2}^{\infty} \, \tfrac{e^{-n} \, q^{2n}}{\Gamma(n+1)} \, \left( (n+1)^2 +  \wt{R}^2 \right)^{(n+1)/2} \, e^{-\wt{R}\, |\arg(1 +n \pm i\wt{R})|} \\
& \hspace*{2.5cm} \leq
\tfrac{e}{\sqrt{2 \pi}} \, \sum_{n=R+1/2}^{\infty} \, q^{2n} \,(n+1)^{-(n+1)+1/2} \, \left( (n+1)^2 +  \wt{R}^2 \right)^{(n+1)/2} \, e^{-\wt{R}\, |\arg(1 +n \pm i\wt{R})|} \\
& \hspace*{2.5cm} \leq
\tfrac{e}{\sqrt{2 \pi}} \, \sum_{n=R+1/2}^{\infty} \, \sqrt{n+1} \, q^{2n} \, \left( 1 +  \left(\wt{R}/(n+1) \right)^2 \right)^{(n+1)/2} \, e^{-\wt{R} \arctan (\wt{R} / (n+1) )} \\
& \hspace*{2.5cm} =
\tfrac{e}{\sqrt{2 \pi}} \, \sum_{n=R+1/2}^{\infty} \, \sqrt{n+1} \, q^{2n} \, e^{ \tfrac{n+1}{2} \log \left( 1 +  ( \tfrac{\wt{R}}{n+1})^2 \right) -\wt{R} \arctan (\tfrac{\wt{R}}{n+1})} \\
& \hspace*{2.5cm} =
\tfrac{e}{\sqrt{2 \pi}} \, \sum_{n=R+1/2}^{\infty} \, \sqrt{n+1} \, q^{2n} \, e^{- \wt{R} \,\, G\left(\wt{R}/(n+1) \right)}
\end{align*}
with $G(y) := \arctan y - \tfrac{1}{ y} \log(1+y^2)$. The function $G$ turns out to be positive for any $y>0$ (with $\lim_{y \to \infty} G(y) \to \pi/2$). So we can write
\begin{align*}
\sum_{n=R+1/2}^{\infty} \, \sqrt{n+1} \, q^{2n} \, e^{ - \wt{R} \,\, G( \tfrac{\wt{R}}{n+1})} & \leq \sum_{n=R+1/2}^{\infty} (n+1) \, q^{2n} =q^{2R+1}(\tfrac{1}{1-q^2}\,R+\tfrac{3-q^2}{2(1-q^2)^2})  \leq \wt{c}(q) \,R\, q^{2R},
\end{align*}
with a constant $\wt{c}$ depending only on $q$.

Finally, putting the two parts of the series together, we obtain 
\begin{align*}
|I_{H^{\pm}}(R)|  \leq 4 \sqrt{2 \pi} e^{c(q)} \wt{R}^{-1/2} \, \int_{-R}^{1/2} \, dx \, (v \wt{R})^{x} \, 
\left( e^{-\eta\, F(\eta) \,R} + \wt{c}(q)\, R \,q^{2R} \right),
\end{align*}
and
\begin{align*}
\lim_{R\to \infty} |I_{H^{\pm}}(R)| &\leq \lim_{R\to \infty} 4 \sqrt{2 \pi} e^{c(q)} \wt{R}^{-1/2} \, 
\left( e^{-\eta\, F(\eta) \,R} + \wt{c}(q)\, R \,q^{2R} \right) \, \tfrac{1}{\log (v \wt{R})} \, \left(  (v \wt{R})^{1/2}-(v \wt{R})^{-R}  \right) \\
 &\leq \lim_{R\to \infty}\text{const.} \,\tfrac{R}{\log R} \,e^{-H(q) \,R} = 0,
\end{align*}
where $H(q) > 0$ for any $0< q <1$. \\
Thus the contribution of the integrals along the horizontal lines vanish. 

Let us now focus on the vertical line. 
 We have
\begin{align*}
|I_{V}(R)| & \leq \int_{-\wt{R}}^{\wt{R}} dy \, |f_t(-R + i y) |  \leq 4 t^{R} \, \int_{-\wt{R}}^{\wt{R}} \,dy\, |\Gamma(-R + i y)| \, |\zD(-R + iy)|.
\end{align*}
Since $R = N + \oh$, with $N \in \N$ and for any $0<q<1$
\begin{align*}
\min \{ (1-q^{3/2})^2, \, (1-q^{1/2})^2, (1-q^{-1/2})^2, (1-q^{-3/2})^2 \} = (1-\sqrt{q})^2,
\end{align*}
we have using \eqref{zDestim}
\begin{align*}
|\zD(-R + iy)| \leq 4 (1-\sqrt{q})^{-2} \, (\tfrac{1-q^2}{\vert w \vert})^{-R} \, (1-q^2)^{-\vert R + i y\vert}.
\end{align*}
As  $|\Gamma(x+i y)| \leq |\Gamma(x)|$ \cite[Ex. 1.4, p. 38]{Olver}, we can estimate
\begin{align*}
|I_{V}(R)| & \leq 4 (1 - \sqrt{q})^{-2} \nu^{R} \, \int_{-\wt{R}}^{\wt{R}} \,dy \, |\Gamma(-R + i y)| \, (1-q^2)^{-\vert R + i y\vert} \\
& \leq 4 (1 - \sqrt{q})^{-2} \nu^{R} \, 2\wt{R} \, |\Gamma(-R)| \, (1-q^2)^{-\sqrt{1+\eta^2} R},
\end{align*}
with the abbreviation $\nu \vc \tfrac{\vert w \vert\, t}{1-q^2} > 0$.\\
To estimate $|\Gamma(-R)|$, we shall use the Euler reflection formula \cite[eq. (2.1.20)]{Paris}
\begin{align*}
|\Gamma(-R)| & = \tfrac{1}{|\Gamma(1+R)|} \, \tfrac{\pi}{|\sin (\pi(-R))|} = \tfrac{\pi}{|\Gamma(1+R)|}  \leq \sqrt{\tfrac{\pi}{2}} \, e^{1+R} |1+R|^{-R - 1/2}  \leq \sqrt{\tfrac{\pi}{2}} \,e \, e^{R} R^{-R - 1/2}
\end{align*}
where we have used the fact that $R = N + \oh$ and the formula \eqref{GammaUp}.

Hence, finally we obtain
\begin{align*}
|I_{V}(R)| & \leq 4 \sqrt{2 \pi} e (1 - \sqrt{q})^{-2} \,\nu^{R}\, e^R \, \wt{R} \, (1-q^2)^{-\sqrt{1+\eta^2} R} R^{-R-1/2} \xrightarrow[R \to \infty]{} 0.
\end{align*}
The heavy dumping $R^{-R}$ assures that the contribution from the vertical part of the contour integral will vanish in the limit $R \to \infty$ independently of the values of $q$ and $t$.

We have thus shown that \eqref{lim} holds true and completed the proof of the proposition.
\end{proof}

We have shown that the trace of the heat operator $e^{-t|\DD|}$ is given by a (doubly infinite) sum of residues of the function $ s \mapsto t^{-s} \,\Gamma(s)\, \zD(s)$. In order to present the result in a compact form it will be convenient to introduce some special functions. Let
us take the Bessel functions of first kind,
\begin{align} \label{J}
J_{\nu}(z)\vc \sum_{k=0}^\infty \tfrac{(-1)^k}{k!\,\Gamma(\nu +k+1)} \,(\tfrac{1}{2}z)^{2k+\nu},\quad z, \nu \in \C,
\end{align}
and introduce new special functions, defined for $z, \nu \in \C$ and $n \in \N$ 
\begin{align}\label{DJ}
\wt{J}^{\,(n)}_{\nu} (z) & \vc  \tfrac{\partial^n\,}{\partial \alpha^n} \big[ (\tfrac{1}{2} z)^{-\alpha} J_{\alpha + \nu} (z) \big] \big|_{\alpha = 0}   \; = \sum_{k=0}^{\infty} \tfrac{(-1)^k }{k!} (\tfrac{1}{2} z)^{2k + \nu} \,
\big( \tfrac{\partial^n}{\partial \alpha^n} \,\tfrac{1}{\Gamma(k + \alpha + \nu + 1)}\big)  \vert_{\alpha = 0} \,.
\end{align}

By \eqref{GammaUp} we have
\begin{align*}
\tfrac{1}{\Gamma(k + \nu + 1)} \underset{k \to \infty}{\sim} \; e^k \, |k+\nu+1|^{-k-\nu -1/2},
\end{align*}
for any $k \in \N$, $\nu \in \C$ and $\vert \arg(k+\nu+1) \vert<\pi$. Moreover, we have the following large argument behaviour of polygamma functions
\begin{align*}
\psi(z) = \log z + \OO(z^{-1}), \qquad \psi^{(n)}(z) = (-1)^{n+1} \tfrac{\Gamma (n)}{z^n} + \OO(z^{n-1}).
\end{align*}
We conclude that for large $k$ we have
$$\big( \tfrac{\partial^n}{\partial \alpha^n} \,\tfrac{1}{\Gamma(k + \alpha + \nu + 1)}\big)  \vert_{\alpha = 0} \; \underset{k \to \infty}{\sim} \; \tfrac{(-1)^{n}}{\sqrt{2 \pi}} \,e^k \,|k+\nu+1|^{-k-\nu -1/2} \, \log^n k,$$
hence the power series defining $\wt{J}^{\,(n)}_{\nu}$ have infinite radius of convergence for any $n \in \N$ and any $\nu \in \C$.

Now we are ready to state the main result concerning the trace of heat operator associated with $\DD$.

\begin{theorem}
\label{PropHeat}
For any $t>0$ and $0<q<1$, the trace of the heat operator associated with $\DD$ has the following form 
\begin{align}
\Tr e^{-t \,|\DD|} = \tfrac{1}{\log^2 q} \,\big[ g(t) \, \log^2 t + \big( h_0(t) 
+ \sum_{a \in \Z^*} h_a(t) \big) \, \log t  + c_0(t) + \sum_{a \in \Z^*} c_a(t) \big] + \sum_{n = 1}^{\infty} d_n \,t^{2n} \label{heat1}
\end{align}
where the coefficients are defined by
\begin{align*}
g(t) & \vc 2 J_0 (2 u t)\,, \\
h_0(t) & \vc 4 \log u \, J_0(2ut) + 4 \wt{J}^{\,(1)}_{0} (2ut)\,, \\
c_0(t) & \vc \big( 2 \log^2 u + \tfrac{2}{3} \pi^2 - \tfrac{1}{3} \log^2 q \big) J_0(2ut)+ 4 \log u \, \wt{J}^{\,(1)}_{0} (2ut) 
+ \wt{J}^{\,(2)}_{0} (2ut)\,,\\
h_a(t) & \vc - i4  \pi \,J_{i \,\tilde{a}}(2 u t)\,\tfrac{1}{\sinh( \pi\tilde{a})}\,,\\
c_a(t) & \vc - i4  \pi \,\,\wt{J}_{i \,\tilde{a}}^{(1)}(2 u t) \,\tfrac{1}{\sinh( \pi\tilde{a})} 
+ 4 \pi^2 \,J_{i \,\tilde{a}}(2 u t) \,\tfrac{ \coth(\pi\tilde{a})}{\sinh(\pi \tilde{a})} \,,\\
d_n & \vc 4 \left(\tfrac{u}{q}\right)^{2n} {\sum_{k=0,\,k\neq n}^{2n}} \tfrac{(-1)^k}{k!(2n-k)!} \, \,q^{2k}\,(1-q^{2(k-n)} )^{-2} \, ,
\end{align*}
with 
\vspace{-0.5cm}
\begin{align*}
\wt{a} \vc  \tfrac{2 \pi}{\log q} \, a, \qquad u \vc  \tfrac{|w|\,q}{1 - q^2} > 0\,.
\end{align*}
\end{theorem}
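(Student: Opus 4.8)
The starting point is Proposition~\ref{MellinLemma2}, which already expresses $\Tr e^{-t\,\vert\DD\vert}$ as the sum of the residues of $f_t(s)=t^{-s}\,\Gamma(s)\,\zD(s)$ over $\Sd_1 \cup -\N$; the plan is to organise this sum, using the meromorphic formula \eqref{zeta_D1} for $\zD$, to resum the resulting series into the special functions \eqref{J}--\eqref{DJ}, and to collect everything into \eqref{heat1}. First I would classify the poles of $f_t$. Since $\Gamma(s+n)/\Gamma(s)=s(s+1)\cdots(s+n-1)$ is entire, the poles of $\zD$ come only from the factors $(1-q^{s+2n})^{-2}$, and at a point with real part $-2m$ exactly the $n=m$ summand of \eqref{zeta_D1} is singular, with a double pole. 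So $f_t$ has triple poles at $s=-2m$, $m\in\N$ (double from $\zD$, simple from $\Gamma$), double poles at $s=-2m+i\tfrac{2\pi}{\log q}b$ with $b\in\Z^{*}$, $m\in\N$ (a pole of $\zD$ only, $\Gamma$ being regular off the real axis), and only a simple pole of $\Gamma$ at the odd negative integers $s=-(2m+1)$. This last case is dispatched immediately: at $s=-(2m+1)$ the series \eqref{zeta_D1} is finite (its coefficients vanish for $n>2m+1$), and the substitution $n\mapsto(2m+1)-n$ together with the identity $(1-q^{N-2n})^{2}=q^{2N-4n}(1-q^{2n-N})^{2}$ shows the finite sum changes sign, so $\zD(-(2m+1))=0$ and it contributes nothing.

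Near a surviving pole $s_0=-2m+i\tfrac{2\pi}{\log q}b$ I would split $\zD=\zD^{\mathrm{diag},m}+\zD^{\mathrm{off},m}$, where $\zD^{\mathrm{diag},m}$ is the $n=m$ summand of \eqref{zeta_D1} and the off-diagonal part is holomorphic at $s_0$. The off-diagonal piece matters only for $b=0$, via the simple pole of $\Gamma$: using $\Gamma(-2m+k)/(k!\,\Gamma(-2m))=(-1)^{k}\binom{2m}{k}$, which vanishes for $k>2m$, its residue at $s=-2m$ equals $\tfrac{t^{2m}}{(2m)!}\,\zD^{\mathrm{off},m}(-2m)$, and a direct computation identifies this with $d_m\,t^{2m}$ (an empty sum, hence $0$, for $m=0$). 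For the diagonal piece I would substitute $s=s_0+\eps$; then $q^{s+2m}$ reduces to $q^{\eps}$, the $\Gamma(s)$ factor of $f_t$ cancels the $1/\Gamma(s)$ in $\zD^{\mathrm{diag},m}$, and summing the residues over $m\in\N$ at fixed $b$ I apply the reflection formula $\Gamma(-m+w+\eps)=(-1)^{m}\pi\,[\sin(\pi(w+\eps))\,\Gamma(1+m-w-\eps)]^{-1}$, with $w=i\tfrac{2\pi}{\log q}b$, to convert $\sum_{m}\tfrac{(ut)^{2m}}{m!}\Gamma(-m+w+\eps)$ into $\tfrac{\pi}{\sin(\pi(w+\eps))}(ut)^{-\nu}J_{\nu}(2ut)$, $\nu=-w-\eps$, by \eqref{J}, where $u=\tfrac{\vert w\vert q}{1-q^{2}}$. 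A short bookkeeping shows the remaining powers of $t$ and $u$ telescope to $q^{\eps}$, so the whole fixed-$b$ diagonal contribution becomes $\underset{\eps=0}{\Res}\,\big[\tfrac{4\pi\,q^{\eps}}{(1-q^{\eps})^{2}\,\sin(\pi(w+\eps))}\,J_{-w-\eps}(2ut)\big]$.

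To evaluate this residue I would use $\tfrac{q^{\eps}}{(1-q^{\eps})^{2}}=\tfrac{1}{4\sinh^{2}(\tfrac12\eps\log q)}=\tfrac{1}{\eps^{2}\log^{2}q}-\tfrac1{12}+\OO(\eps^{2})$ together with the identities $\partial_{\nu}J_{\nu}=\log(\tfrac z2)J_{\nu}+\wt J^{\,(1)}_{\nu}$ and $\partial_{\nu}^{2}J_{\nu}=\log^{2}(\tfrac z2)J_{\nu}+2\log(\tfrac z2)\wt J^{\,(1)}_{\nu}+\wt J^{\,(2)}_{\nu}$, which are immediate from \eqref{DJ}, and $\log(ut)=\log u+\log t$. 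For $b\neq0$ one has $\sin(\pi w)=i\sinh(\pi\wt b)\neq0$ with $\wt b=\tfrac{2\pi b}{\log q}$, so the pole is double; expanding and then reindexing $b\mapsto-a$ reproduces $\tfrac1{\log^{2}q}\big(h_{a}(t)\log t+c_{a}(t)\big)$. For $b=0$ the factor $1/\sin(\pi\eps)$, with $\tfrac{\pi\eps}{\sin\pi\eps}=1+\tfrac{\pi^{2}}{6}\eps^{2}+\cdots$, adds an extra simple pole, so the pole is triple, and reading off the coefficients of $\log^{2}t$, $\log t$ and $1$ gives $\tfrac1{\log^{2}q}g(t)$, $\tfrac1{\log^{2}q}h_{0}(t)$ and $\tfrac1{\log^{2}q}c_{0}(t)$, with the constants $\tfrac23\pi^{2}$ and $-\tfrac13\log^{2}q$ in $c_{0}$ coming from the $\tfrac{\pi^{2}}{6}$ and $-\tfrac1{12}$ above.

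Summing the diagonal contributions over $b\in\Z$ and adding $\sum_{n\ge1}d_{n}t^{2n}$ from the off-diagonal parts assembles \eqref{heat1}. The reorderings used are legitimate: the power series defining $J_{i\wt a}$ and $\wt J^{\,(n)}_{i\wt a}$ have infinite radius of convergence (established just before the theorem), while $1/\sinh(\pi\wt a)$ decays exponentially in $\vert a\vert$ and $1/\Gamma(1+m-w-\eps)$ decays super-exponentially in $m$, so the doubly-indexed sum over $(m,b)$ converges absolutely, commutes with the $\eps$-residue, and may be summed in any order; by Proposition~\ref{MellinLemma2} its total is $\Tr e^{-t\vert\DD\vert}$. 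I expect the main obstacle to be exactly the block of the previous two paragraphs: carrying out the reflection-formula resummation cleanly, checking that the powers of $t$ and $u$ all collapse to $q^{\eps}$, and extracting the triple-pole residue so that every constant matches $g,h_{0},h_{a},c_{0},c_{a}$.
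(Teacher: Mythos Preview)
Your proposal is correct and complete in outline; it follows a genuinely different organisation than the paper's own proof, and it is worth recording the contrast.

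The paper proceeds pole by pole: for each $s_0=-2m+i\tfrac{2\pi}{\log q}a$ it computes the residue of $t^{-s}\Gamma(s)\zD(s)$ (with Mathematica) and obtains explicit numerical coefficients $g_m$, $h_{m,a}$, $c_{m,a}$, $\wt d_n$ depending on $m$ and $a$; only at the very end are the $m$-series recognised, via \eqref{J}--\eqref{DJ} and Euler reflection, as the Bessel functions $J_{i\wt a}$, $\wt J^{(1)}_{i\wt a}$, $\wt J^{(2)}_0$. You invert this order: at fixed $b$ you first resum over $m$ using the reflection identity $\Gamma(-m+w+\eps)=\dfrac{(-1)^m\pi}{\sin(\pi(w+\eps))\,\Gamma(1+m-w-\eps)}$, which immediately produces $J_{-w-\eps}(2ut)$ in closed form, and then take a single residue in the local variable $\eps$. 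This is cleaner conceptually: it explains \emph{why} Bessel functions appear (they are nothing but the $m$-sum of the diagonal summand of $\zD$), it avoids the term-by-term Mathematica computation, and it makes the constants $\tfrac{2}{3}\pi^2$ and $-\tfrac{1}{3}\log^2 q$ in $c_0$ transparent as the $\eps^2$-coefficients of $\pi\eps/\sin(\pi\eps)$ and of $q^{\eps}/(1-q^{\eps})^2$. The paper's route, by contrast, is more pedestrian but makes the individual Laurent coefficients $a_{\alpha,p}$ of \eqref{Scoef} directly visible, which it later reuses in Theorem~\ref{spectralaction}. Your vanishing argument at the odd negative integers (the involution $n\mapsto 2m+1-n$ flipping the sign of the finite sum) is a nice touch that the paper does not make explicit; the paper simply records $\wt d_{2n+1}=0$ after the Mathematica computation. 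The only place to be careful is the interchange $\sum_m\Res_{\eps=0}=\Res_{\eps=0}\sum_m$: uniform convergence on a punctured $\eps$-disc follows from $\vert\Gamma(-m+w+\eps)\vert\le \pi/(\vert\sin\pi(w+\eps)\vert\,\Gamma(m+1-\vert w+\eps\vert))$, which is the estimate implicit in your final paragraph.
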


\noindent Before proving the above result let us make few remarks.

\begin{rem}
\label{rem1}
Reappearance of \,$\log t$-terms:\\
{\rm The function $\wt J_0^{(1)}$ can be expressed in terms of standard Bessel functions, namely
\begin{align}
\wt J_0^{(1)}(z) &=\tfrac{\partial\,}{\partial \alpha} \big[ (\tfrac{1}{2} z)^{-\alpha} J_{\alpha} (z) \big] \big|_{\alpha = 0} = -\log(\tfrac{z}{2})\,J_0(z) + \tfrac{\partial\,}{\partial \alpha} \big[ J_{\alpha} (z) \big] \big|_{\alpha = 0}  \nonumber\\
&=-\log(\tfrac{z}{2})\,J_0(z)+\tfrac{\pi}{2} \,Y_0(z) \label{Jtilde}
\end{align}
where $Y_0 \vc \tfrac{2}{\pi}\,\tfrac{\partial\,}{\partial \nu}J_\nu  \vert_{\nu = 0}$ is the Bessel function of the second kind of order 0 \cite[p. 243]{Olver}. 

However, for arbitrary $n \in \N$ and $\nu \in \C$ the expansion of $\wt{J}^{\,(n)}_{\nu}$ in terms of known functions, if possible at all, would only obscure the properties of newly defined functions. The reason is that the formula \eqref{Jtilde} suggests that $\wt J_0^{(1)}(z)$ is singular at $z=0$ due to the presence of $\log(\tfrac{z}{2})$ term. But the behaviour of $Y_0(z)$ near $z = 0^+$ is also logarithmic \cite[p. 243]{Olver} and both singularities precisely cancel out, so that $\wt J_0^{(1)}(z)$ is regular at $z=0$ and
\begin{align*}
\wt J_0^{(1)}(0) = \gamma,
\end{align*}
where $\gamma$ denotes the Euler gamma constant. Similarly we have $$\wt J_0^{(2)}(0) = \gamma^2 - \tfrac{\pi^2}{6}.$$
On the other hand, for purely imaginary $\nu$, the functions $\wt{J}^{\,(n)}_{\nu}$ including $n=0$ - the standard Bessel function, are not regular at $z=0$. Near $z=0$ we have $J_{\nu}(z) = \left(\tfrac{z}{2}\right)^{\nu} \tfrac{1}{\Gamma(\nu+1)} (1 + \OO(z^2))$ \cite{Olver}, so the behaviour at the origin is oscillatory. This behaviour is reflected by the coefficients $\sum_{a \in \Z^*} h_a(t)$ and $\sum_{a \in \Z^*} c_a(t)$ in the formula \eqref{heat1}. However, those oscillations are bounded since $\sum_{a \in \Z^*} h_a(t)$ and $\sum_{a \in \Z^*} c_a(t)$ converge for any $t>0$.

This shows that our decomposition in \eqref{heat1} in terms of the $\wt{J}^{\,(n)}_{\nu}$-functions is convenient since it clearly singles out the different parts of the trace of the heat kernel operator: the singular one is proportional to $\log t$ and $\log^2 t$, the oscillatory one is $\sum_{a \in \Z^*} c_a(t)$ while the remainder is regular at $t=0$. It is also convenient to separate this remainder in $c_0(t)$ and $\sum_{n = 1}^{\infty} d_n t^{2n}$, since they are of different origin as will become clear in the course of the proof. 
Let us also mention that the oscillatory behaviour of the heat kernel for the standard Podle\'s sphere has been detected in \cite[Lemma 3.4]{NeshTuset} in the context of a twisted version of the index theorem.}
 \end{rem} 

\begin{rem}
\label{rem2}
About the reappearance of complex numbers:\\
{\rm Since $\Tr e^{-t \,|\DD|}$ is a real function of $t$, one may be worried about the appearance of complex numbers in 
coefficients $h_a(t)$ and $c_a(t)$. However, the imaginary parts of $h_a(t)$ and $c_a(t)$ are 
antisymmetric in $a$, hence will cancel out in (\ref{heat1}) because of summation over $a \in \Z^*$. Indeed, the 
definition (\ref{DJ}) of functions $\wt{J}^{(n)}_{\nu}$ involve $\Gamma$ functions of a complex argument 
and $\Im (\Gamma(z)) = - \Im (\Gamma(\bar{z}))$ and $\Re (\Gamma(z)) = \Re (\Gamma(\bar{z}))$ for any $z \in \C$. By 
combining this fact with the parity of hyperbolic functions, we get the desired result.}
\end{rem} 

\begin{rem} 
\label{rem3}
All series in \eqref{heat1} are absolutely convergent:
\\
{\rm 
This remark concerns the convergence of the series $\sum_{a \in \Z^*} h_a(t)$, $\sum_{a \in \Z^*} c_a(t)$ and $\sum_{n = 1}^{\infty} d_n t^{2n}$.  As we mentioned at the beginning of the proof of Proposition \ref{MellinLemma2}, a finite (equal to 0 in particular) contribution from the contour integral guarantees that the sum of residues is finite for all $t >0$. It turns out, that those series are in fact absolutely convergent for any fixed $t$ as we shall show below. 

Let us first investigate the behaviour of Bessel functions $J_{i\, \wt{a}}$ for large $\wt{a}$: as $\nu$ tends to $\infty$ in 
the sector $|\arg \nu| < \pi$, we have the following expansion (compare with \cite[p. 374]{Olver})
\begin{align}
\label{Jnu}
J_{\nu}(z) = \tfrac{1}{\Gamma(1+\nu)}(\tfrac{1}{2} z)^{\nu}\, \left( 1 + \OO\left(\tfrac{1}{\nu}\right) \right), 
\quad \text{ for } |\nu| \gg |\tfrac{1}{2} z|^2, z\in \C
\end{align}
which follows from 
\begin{align*}
\tfrac{1}{\Gamma(\nu + k + 1)} = \tfrac{1}{(\nu + k) \ldots (\nu + 1)\Gamma(\nu+1)} 
= \tfrac{\nu^{-k}}{\Gamma(1+\nu)} \left( 1 + \OO\left(\tfrac{1}{\nu}\right) \right), \quad \forall k \in \N,
\end{align*}
and can be used in \eqref{J}. From Euler reflection formula
\begin{align}
\label{Euler}
\Gamma(z) \,\Gamma(1-z) = \tfrac{\pi}{\sin{\pi z}} \quad \text{for any } z \in \C \setminus \Z,
\end{align}
we deduce the following
\begin{align*}
|\Gamma(1 + i \wt{a})|^2 = \Gamma(1 + i \wt{a}) \,\Gamma(1 - i \wt{a}) 
= i \wt{a} \,\Gamma(i \wt{a}) \,\Gamma(1 - i \wt{a}) = \tfrac{\pi \tilde{a} }{\sinh(\pi \tilde{a})}\,.
\end{align*}
So we finally get, assuming $(ut)^2<\tilde{a}$ by \eqref{Jnu}
\begin{align*}
|h_a(t)| = \left| \tfrac{4 \pi \, (ut)^{i \tilde{a}}}{\Gamma(1 + i \tilde{a}) \sinh(\pi \tilde{a}) } 
\right| \left( 1 + \OO(\wt{a}^{-1})\right) 
= \tfrac{4 \pi}{\sqrt{\pi \tilde{a} \sinh(\pi \tilde{a})}} \left( 1 + \OO(\wt{a}^{-1}) \right),
\end{align*}
what guarantees the absolute convergence of $\sum_{a \in \Z^*} h_a(t)$.
\\
We now pass on to the analysis of the coefficients $c_a(t)$. For the digamma-function $\psi$ that appears 
in the $\wt{J}^{(1)}_{i \wt{a}}$ function \eqref{DJ}, the following recurrence relation holds: $\psi(z+1) = \psi(z) + z^{-1}$
thus $\psi(\nu +k+1) = \psi(\nu) + \OO(\nu^{-1})$. As a consequence of the latter and the previous analysis for 
$J_{\nu}$, we get
\begin{align*}
\wt{J}^{(1)}_{\nu}(z) = - \tfrac{1}{\Gamma(\nu+1)} \,(\tfrac{1}{2} z)^{\nu} \,\psi(\nu)\left( 1 + \OO(\nu^{-1}) \right), 
\quad \text{ for } |\nu| \gg |\tfrac{1}{2} z|^2, \, z\in\C.
\end{align*}
The digamma function behaves logarithmically for large $\vert z \vert$: $\psi(z) = \log(z) + \OO(z^{-1})$  
\cite[p. 39]{Olver}.
Thus
\begin{align*}
|\psi(i \wt{a})| = \sqrt{(\log{|\wt{a}|})^2  + \tfrac{\pi^2}{4}} + \OO(\wt{a}^{-1}) 
= \log{|\wt{a}|} + \OO\left(\log^{-1}|\wt{a}|\right),
\end{align*}
and we arrive at the conclusion
\begin{align*}
|c_a(t)| & = \tfrac{4 \pi \log{|\tilde{a}|}}{\sqrt{\pi \tilde{a} \sinh(\pi \tilde{a})}} \big( 1 + \OO (\log^{-1}|\wt{a}|) \big), \quad \text{ for } t<\sqrt{\tilde{a}} u^{-1},
\end{align*}
showing that $\sum_{a \in \Z^*} c_a(t)$ is indeed absolutely convergent.
\\
Finally, let us address the problem of convergence of series $\sum_{n = 1}^{\infty} d_n t^{2n}$. We have
\begin{align*} 
\sum_{n = 1}^{\infty} |d_n \,t^{2n}|  \leq 4 \sum_{n = 1}^{\infty} \left(\tfrac{ut}{q}\right)^{2n} {\sum_{k=0,\,k\neq n}^{2n}} \tfrac{1}{k!(2n-k)!} \, \tfrac{q^{2k}}{\left(1-q^{2(k-n)} \right)^2}.
\end{align*}
Let us split the sum over $k$ into two parts: $0 \leq k \leq n-1$ and $n+1 \leq k \leq 2n$. The first inequality implies that $(1-q^{2(k-n)})^{-2} \leq (1-q^{-2})^{-2}$, so 
\begin{align*}
\sum_{k=0}^{n-1} \tfrac{1}{k!(2n-k)!} \, \tfrac{q^{2k}}{\left(1-q^{2(k-n)} \right)^2} \leq 
\tfrac{1}{(1-q^{-2})^{2}} \sum_{k=0}^{n-1} \tfrac{q^{2k}}{k!(2n-k)!} <\tfrac{1}{(1-q^{-2})^{2}} \sum_{k=0}^{\infty} \tfrac{q^{2k}}{k!(2n-k)!}= \tfrac{1}{(1-q^{-2})^{2}} \tfrac{(1+q^2)^{2n}}{(2n)!}\,.
\end{align*}
Similarly, $n+1 \leq k \leq 2n \: \Rightarrow \: (1-q^{2(k-n)})^{-2} \leq (1-q^{2})^{-2}$, so
\begin{align*}
\sum_{k=n+1}^{2n} \tfrac{1}{k!(2n-k)!} \, \tfrac{q^{2k}}{\left(1-q^{2(k-n)} \right)^2} \leq 
\tfrac{1}{(1-q^{2})^{2}} \sum_{k=n+1}^{2n} \tfrac{q^{2k}}{k!(2n-k)!} \leq \tfrac{1}{(1-q^{2})^{2}} \sum_{k=0}^{\infty} \tfrac{q^{2k}}{k!(2n-k)!} = \tfrac{1}{(1-q^{2})^{2}} \tfrac{(1+q^2)^{2n}}{(2n)!}\,.
\end{align*}
Hence, by putting the two parts back together we obtain the following estimate 
\begin{align*}
\sum_{n = 1}^{\infty} |d_n\, t^{2n}| \leq 4 \, \tfrac{1 + q^4}{\left(1 - q^2\right)^2} \sum_{n = 1}^{\infty} \tfrac{1}{(2n)!} \left(\tfrac{ut (1+q^2)}{q}\right)^{2n} =4 \tfrac{1 + q^4}{\left(1 - q^2\right)^2} \big(\cosh(\vert w\vert\tfrac{1+q^2}{1-q^2}t)-1\big),\quad \forall t>0.
\end{align*}
So $\sum_{n = 1}^{\infty} d_n \,t^{2n}$ is indeed absolutely convergent for any fixed $t$.}
\end{rem}

\begin{rem}
About the existence of $\log$-terms:\\
{\rm  The presence in \eqref{heat1} of $\log$-terms is not a surprise:  in \cite[eq. (1.1)]{GilkeyGrubb}, the asymptotics of the heat trace
\begin{align}
\label{GG}
\Tr e^{-t\,P}\underset{t\to 0}{\sim} \sum_{l=0}^\infty a_l(P)\,t^{(l-m)/d} + \log t \,\sum_{k=1}^\infty b_k(P)\, t^k
\end{align}
is recalled for a classical positive elliptic pseudodifferential operator $P$ of order $d$ on a vector bundle on a $m$-dimensional compact smooth manifold. \\
The coefficients $b_k(P)=\underset{s=-k}{\Res} \,(s+k)\,\Gamma(s)\,\zeta_P(s)$ correspond to the poles or order one of $\zeta_P$ at the point $s\in -\N$. So, even if in \eqref{GG}, we could forget these terms since $\lim_{t\to 0} t^k\log t=0$, they pop up when one wants to get an exact heat trace formula and not only its asymptotics. Recall that here, $\zeta_\DD$ has poles of order two on negative integers.
}
\end{rem}

\begin{proof}[\it Proof of Theorem \ref{PropHeat}]
By the Proposition \ref{MellinLemma2} we have
\begin{align*}
\Tr e^{-t \,|\DD|} = \sum_{s_0 \in (\Sd_1 \cup - \N)} \,\underset{s = s_0}{\Res}\, \, t^{-s} \,\zD(s) \,\Gamma(s),
\end{align*}
and we obtain the following structure of the heat-trace using Mathematica \cite{Mathematica}:
\begin{align}
\Tr e^{-t\, |\DD|} & = \sum_{\alpha \in \Sd_1} \, \sum_{n = 0}^{2} a_{\a,n} \, (\log t)^n \, t^{-\alpha} + \sum_{n = 0}^{\infty} \wt{d}_n \,t^{n} \label{heat_coeff} \\
&\vc \tfrac{1}{\log^2 q} \Big[  \sum_{m \in \N} g_m\, (u t)^{2m} \, \log^2 t  + \sum_{m \in \N} \,\sum_{a \in \Z} h_{m,a} \,(ut)^{2m + \tfrac{2 \pi i}{\log q} a} \, \log t  \notag \\ 
& \hspace{5.25cm} + \sum_{m \in \N} \,\sum_{a \in \Z} c_{m,a} \,(ut)^{2m + \tfrac{2 \pi i}{\log q} a} \Big] + \sum_{n = 0}^{\infty} \wt{d}_n \, t^{n}.\label{heat_coef}
\end{align}

It should be stressed that the residues at real poles should be computed separately from the complex ones. The 
reason is that the function $\Gamma(s+n)$ appearing in the formula \eqref{zeta_D1} is singular at $s = -2m$, but regular at 
$s = -2m + \tfrac{2 \pi i}{\log q} \,a$ for any $a \in \Z^*$. Note that $\Gamma(s)^{-1}$, which cancels the third order 
pole in the $\zD$-function is suppressed by $\Gamma(s)$ from the inverse Mellin transform (\ref{invMellin}). In 
particular, this means that the residues at complex poles do not contribute to the $\log^2 t$ term in (\ref{heat1}). Moreover, it is convenient to consider separately the residues at $s \in -\N$ resulting from the singular part of the $\Gamma$-function. This is because our $\zD$-function \eqref{zeta_D1} is an infinite series and for $m \in \N$ we have
\begin{align*}
\underset{s=-2m}{\Res} \, t^{-s}\, \Gamma(s) \, \zD(s)
& = 4 \underset{s=-2m}{\Res} \, (ut)^{-s} \,\sum_{n=0}^{\infty} \, \tfrac{\Gamma (s+n)}{\Gamma(n+1)} \, \tfrac{q^{s+2n}}{(1-q^{s+2n})^2} \\
& = 4 \underset{s=-2m}{\Res} \, (ut)^{-s} \,\tfrac{\Gamma (s+m)}{\Gamma(m+1)} \, \tfrac{q^{s+2m}}{(1-q^{s+2m})^2} \\
& \hspace{2cm} + 4 (utq^{-1})^{2m} {\sum_{n=0,n\neq m}^{2m}} \tfrac{1}{\Gamma(n+1)} \, \tfrac{q^{2n}}{\left(1-q^{2(n-m)} \right)^2} \,\underset{s=-2m}{\Res} \Gamma (s+n), \\
\underset{s=-2m-1}{\Res} \, t^{-s} \,\Gamma(s)\, \zD(s)
& = 4 (utq^{-1})^{2m+1} \sum_{n=0}^{2m+1} \tfrac{1}{\Gamma(n+1)} \, \tfrac{q^{2n}}{\left(1-q^{2(n-m)-1} \right)^2} \,\underset{s=-2m-1}{\Res} \Gamma (s+n).
\end{align*}
Precise formulas for the coefficients are obtained with Mathematica
\begin{align}
&g_m  = \tfrac{(-1)^{m}2 }{(m!)^2}\,, \label{g}\\
&h_{m,0}  = \tfrac{ (-1)^{m} 4}{(m!)^2}\,\big(\log u-\psi ^{(0)}(m+1)\big)\,,\label{h}\\
&c_{m,0}  = \tfrac{(-1)^{m}}{3 (m!)^2} \big[6 \log ^2u - \log ^2 q+ 2 \pi ^2 - 12 \log (u)\, \psi ^{(0)}(m+1)\nonumber \\
&\hspace{2.4cm}+6 \psi ^{(0)}(m+1)^2-6 \psi ^{(1)}(m+1)\big]\,, \label{c}\\
&h_{m,a}  = -  \tfrac{4}{m!}\,\Gamma (-m-i\tfrac{2  \pi}{\log q}\,a )\,, \label{ha}\\
&c_{m,a}  = - \tfrac{4}{m!}\,\Gamma (-m-i\tfrac{2  \pi}{\log q}\,a)
\big( \log u - \psi ^{(0)}(-m -i\tfrac{2 \pi}{\log q}\,a) \big) \label{ca}\,, \\
&\wt{d}_n = 4 \left(\tfrac{u}{q}\right)^{n} {\sum_{k=0,k\neq n/2}^{n}} \tfrac{(-1)^{n-k}}{k!(n-k)!}\, \, q^{2k}(1-q^{2k-n})^{-2} \label{tDn}\,,
\end{align}
where $\psi^{(n)}$ stands for polygamma-functions of order $n$. \\
Using the definition of modified Bessel functions of 
first type (\ref{DJ}), the Euler's reflection formula and the fact that $\wt{d}_0 = \wt{d}_{2n+1} = 0$, 
one obtains the formula (\ref{heat1}) with $d_n\vc\wt{d}_{2n}$ for all $n\in \N$. 
Since we have already commented on the convergence of series over $a$ in (\ref{heat1}), the proof of Theorem \ref{PropHeat} is now complete. 
\end{proof}

Let us summarize the origin of each term in the heat kernel expansion \eqref{heat1}. The formula \eqref{heat1} is obtained by computations of the residues of the function $f_t: s \to t^{-s} \Gamma(s) \zD(s)$. This function has poles of third order at $s \in -2 \N$ and second order ones at $s \in -2 \N + \tfrac{2 \pi i}{\log q} \Z^*$. The Laurent expansion at $s = -2 m \in -2 \N$ of the function $f_t$ is:
\begin{align*}
f_t(s-2m) &= t^{-s+2m} \big( \gamma^{-1}_m \tfrac{1}{s-2m} + \gamma^{0}_m + \gamma^{1}_m (s-2m) + \OO((s-2m)^2) \big) \\
&\hspace{1.2 cm} \x \big( \zeta^{-2}_m \tfrac{1}{(s-2m)^{2}} + \zeta^{-1}_m \tfrac{1}{s-2m} + \zeta^{0}_m + \OO(s-2m) \big)
\end{align*}
with some coefficients $\gamma^i_j$ and $\zeta^i_j$. 
The following structure of coefficients (\ref{g}--\ref{tDn}) emerges:
\begin{itemize}
\item $g_m$ comes from $\zeta^{-2}_m \,\gamma^{-1}_m$;
\item $h_{m,0}$ comes from $\zeta^{-2}_m \,\gamma^{0}_m$ and $\zeta^{-1}_m\, \gamma^{-1}_m$;
\item $c_{m,0}$ comes from $\zeta^{-2}_m \,\gamma^{1}_m$ and $\zeta^{-1}_m \,\gamma^{0}_m$;
\item $\wt{d}_{n}$ comes from $\zeta^{0}_m \,\gamma^{-1}_m$.
\end{itemize}

Similarly, near $s = -2 m + \tfrac{2 \pi i}{\log q} \,a$ we have
\begin{align*}
& f_t(s-2m + \tfrac{2 \pi i}{\log q} a) = t^{-s+2m - \tfrac{2 \pi i}{\log q}\, a} \x \big( \gamma^{0}_{m,a} + \gamma^{1}_{m,a} (s-2m+ \tfrac{2 \pi i}{\log q}\, a) + \OO((s-2m+ \tfrac{2 \pi i}{\log q} \,a)^2) \big)\\
& \hspace{3.9cm} \x  \big( \zeta^{-2}_{m,a}\, \tfrac{1}{\left(s-2m+ \tfrac{2 \pi i}{\log q} \,a \right)^{2}} + \zeta^{-1}_{m,a}\, \tfrac{1}{s-2m+ \tfrac{2 \pi i}{\log q} a} + \zeta^{0}_{m,a} + \OO(s-2m+ \tfrac{2 \pi i}{\log q}\, a) \big) ,
\end{align*}
so
\begin{itemize}
\item $h_{m,a}$ comes from $\zeta^{-2}_{m,a}\, \gamma^{0}_{m,a}$;
\item $c_{m,a}$ comes from $\zeta^{-2}_{m,a}\, \gamma^{1}_{m,a}$ and $\zeta^{-1}_{m,a}\, \gamma^{0}_{m,a}$.
\end{itemize}

Let us note that for a compact Riemannian spin manifold of even dimension (like the sphere $S^2$ which has been deformed here) the zeta-function associated with the usual Dirac operator is regular for arguments in $\Sd_1$ defined in \eqref{S} \cite[Lemma 1.10.1]{Gilkey}, so only the coefficients $\wt{d}_{n}$ are ``classical''. 

\subsection{\texorpdfstring{About $t \to \infty$}{limit t infinite}}
\begin{lemma}
\label{t->infinity}
We have the following behaviour
\begin{align}
\label{t=infty}
\Tr \, e^{-t \,|\DD|}  \leq c \,e^{-|\omega|\,t} \text{ for } t\geq1.
\end{align}
\end{lemma}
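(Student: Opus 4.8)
The starting point is the explicit expression already recorded in the proof of Lemma~\ref{HeatLemma}, namely
\begin{align*}
\Tr e^{-t\,|\DD|} = 4 \sum_{n=1}^{\infty} n\, e^{-t\,|\omega|\,[n]_q}\,,
\end{align*}
together with the fact that $n\mapsto [n]_q$ is strictly increasing with $[1]_q=1$. The idea is simply to pull out the contribution of the lowest eigenvalue: write $e^{-t|\omega|[n]_q} = e^{-t|\omega|}\,e^{-t|\omega|([n]_q-1)}$, so that
\begin{align*}
\Tr e^{-t\,|\DD|} = 4\,e^{-t\,|\omega|}\,\sum_{n=1}^{\infty} n\, e^{-t\,|\omega|\,([n]_q-1)}\,.
\end{align*}

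For $t\geq 1$ and every $n\geq1$ we have $[n]_q-1\geq 0$, hence $e^{-t|\omega|([n]_q-1)}\leq e^{-|\omega|([n]_q-1)}$. This yields
\begin{align*}
\Tr e^{-t\,|\DD|} \leq 4\,e^{-t\,|\omega|}\,\sum_{n=1}^{\infty} n\, e^{-|\omega|\,([n]_q-1)} = e^{|\omega|}\big(\Tr e^{-|\DD|}\big)\, e^{-t\,|\omega|}\,,
\end{align*}
so one may take $c \vc e^{|\omega|}\,\Tr e^{-|\DD|}$, a finite constant depending only on $q$ and $w$. Finiteness of $\Tr e^{-|\DD|}$ is immediate: from $q^{-1}-q=\tfrac{1-q^2}{q}$ one gets $[n]_q=\tfrac{q^{1-n}(1-q^{2n})}{1-q^2}\geq q^{1-n}$, so $\sum_{n\geq1} n\,e^{-|\omega|[n]_q}\leq \sum_{n\geq1} n\,e^{-|\omega|q\,q^{-n}}<\infty$ (equivalently one may just invoke Lemma~\ref{HeatLemma} at the single value $t=1$).

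There is essentially no obstacle here: the only two ingredients are the monotonicity of the $q$-numbers (so that $[n]_q-1\ge 0$, which is where $t\geq1$ enters, via $t([n]_q-1)\geq [n]_q-1$) and the convergence of the heat trace at $t=1$, both of which are elementary. The estimate is deliberately crude — it only captures the leading exponential decay $e^{-|\omega|t}$ and makes no attempt at the optimal constant — which is all that is needed for the purpose of controlling the $t\to\infty$ regime.
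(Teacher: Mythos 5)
Your proof is correct and uses essentially the same idea as the paper: factor out the lowest eigenvalue $|\omega|[1]_q=|\omega|$, use $t\ge 1$ together with $[n]_q-1\ge 0$ to bound the remaining sum by its value at $t=1$, and conclude with finiteness of $\Tr e^{-|\DD|}$. The paper merely phrases this as a general remark about positive operators with discrete spectrum and $\dim\Ker=0$, while you carry it out directly on the explicit $q$-number eigenvalues; the two arguments are the same.
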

\begin{proof}
Recall that if $A$ is a positive operator (possibly unbounded) with discrete spectrum $0=\lambda_0<\lambda_1\leq \lambda_2\leq \cdots$ such that $\Tr\,e^{-A} <\infty$, then $\Tr\,e^{-t\,A}=d + \mathcal{O}(e^{-t\lambda_1})$  for $t\geq1$ where $d \vc \dim\Ker(A)$: $\Tr\,e^{-t\,A}=d+\sum_{n=1}^\infty e^{-t\,\lambda_n}=d+e^{-t\,\lambda_1}\sum_{n=1}^\infty e^{-t\,(\lambda_n-\lambda_1)} \leq d+e^{-t\,\lambda_1}c \text{ for } t\geq1$, 
where $c\vc \sum_{n=1}^\infty e^{-(\lambda_n-\lambda_1)}=e^{\lambda_1}\Tr e^{-A}<\infty$. \\
Thus $\Tr \, e^{-t \,|\DD|}  \leq c \,e^{-|\omega|\,t} \text{ for } t\geq 1$, since $\lambda_1(|\DD|) = |\omega| [1]_q = |\omega|$.
\end{proof}

The reader might be suspicious about the large $t$ behaviour of the formula \eqref{heat1} since for instance 
$$
g(t)\,\log^2 t \underset{t\to \infty}{\sim} \tfrac{2}{\sqrt{u\pi}}  \,\cos(2ut-\tfrac{\pi}{4})\, \tfrac{\log^2 t}{\sqrt{t}}
$$
which does not seem to be compatible with \eqref{t=infty}. But, in fact the term $\sum_{n = 1}^{\infty} d_n \,t^{2n}$ does cancel exactly the logarithmic divergences, the constant terms and the oscillating part, giving the expected exponential decay. This sounds surprising, but seems to be a generic property of double exponential sums - see \cite[Example 12]{Flajolet}.

\subsection{\texorpdfstring{About $q \to 1$}{limit q=1}}
\label{q=1}

We are now interested in the behaviour of \eqref{heat1} when $q \to 1$, so denote by $\DD_q$ the operator introduced in \eqref{Dirac} and by $\DD_1$ the usual Dirac operator on the 2-sphere times $\vert w\vert$. 
\begin{prop}
\label{q->1}
We get the following limit:
$$\lim_{q\to 1}\Tr \,e^{-t\,\vert\DD_q\vert} = \Tr \, e^{-t\,\vert\DD_1\vert}  \text{ for any } t>0.$$
\end{prop}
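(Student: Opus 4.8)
The plan is to avoid the closed formula \eqref{heat1} altogether and argue directly from the exact series for the heat trace. By Lemma \ref{HeatLemma} one has, for every $0<q<1$ and $t>0$,
\begin{align*}
\Tr e^{-t\,\vert\DD_q\vert}=4\sum_{n=1}^{\infty}n\,e^{-t\,\vert\omega\vert\,[n]_q},\qquad [n]_q=\tfrac{q^{-n}-q^{n}}{q^{-1}-q}.
\end{align*}
First I would observe that, writing $q=e^{-h}$ with $h=\log q^{-1}>0$, one has $[n]_q=\sinh(nh)/\sinh h$, and since $x\mapsto \sinh(x)/x$ is increasing on $(0,\infty)$ this shows simultaneously that $[n]_q\to n$ as $q\to 1^{-}$ for each fixed $n$, and --- the point that makes the whole argument work --- that $[n]_q\ge n$ for all $0<q<1$ and $n\ge 1$ (equivalently, pairing symmetrically the $n$ terms of $[n]_q=\sum_{j=0}^{n-1}q^{2j-(n-1)}$ gives $q^{2j-(n-1)}+q^{(n-1)-2j}\ge 2$, with a middle term $1$ when $n$ is odd).

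Next I would invoke Tannery's theorem (dominated convergence for series): the summands obey $0\le 4n\,e^{-t\vert\omega\vert[n]_q}\le 4n\,e^{-t\vert\omega\vert n}$ uniformly in $q$, and $\sum_{n\ge 1}4n\,e^{-t\vert\omega\vert n}<\infty$ for every $t>0$ because $t\vert\omega\vert>0$; hence the limit can be taken term by term, yielding $\lim_{q\to 1}\Tr e^{-t\vert\DD_q\vert}=4\sum_{n=1}^{\infty}n\,e^{-t\vert\omega\vert n}$. (Alternatively, since $h\mapsto \sinh(nh)/\sinh h$ decreases to $n$ as $h\downarrow 0$, each summand increases to its limit and the monotone convergence theorem applies, without even exhibiting the dominating series.)

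Finally I would identify the right-hand side with $\Tr e^{-t\vert\DD_1\vert}$: the Dirac operator on the round $2$-sphere has eigenvalues $\pm n$, $n\in\N_{\ge 1}$, with multiplicity $2n$, so $\vert\DD_1\vert=\vert\omega\vert$ times the modulus of the classical Dirac operator has each eigenvalue $\vert\omega\vert n$ with multiplicity $4n$, hence $\Tr e^{-t\vert\DD_1\vert}=4\sum_{n\ge 1}n\,e^{-t\vert\omega\vert n}$. This matches the limit above, which also confirms that the limiting geometry is $S^2$ with the metric of curvature $1$ (consistent with $[l+\tfrac12]\to l+\tfrac12$ and the multiplicities $2l+1$).

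I do not expect a genuine obstacle: the only step requiring care is the interchange of $\lim_{q\to 1}$ with the infinite sum, and this is settled once and for all by the $q$-uniform bound $[n]_q\ge n$. By contrast, deriving the statement from the closed form \eqref{heat1} would be delicate, since one would have to check that the $\log t$- and $\log^2 t$-terms and the oscillatory sums $\sum_{a\in\Z^*}h_a(t)$, $\sum_{a\in\Z^*}c_a(t)$ (all carrying a $1/\log^2 q$ prefactor which diverges as $q\to 1$), together with $\sum_{n}d_n t^{2n}$, conspire to reproduce exactly $4\sum_{n\ge 1}n\,e^{-t\vert\omega\vert n}$ in the limit; the series approach bypasses this entirely.
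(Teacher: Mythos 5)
Your proof is correct and follows essentially the same route as the paper: both arguments rest on the two facts that $[n]_q\ge n$ for all $0<q<1$ and $[n]_q\to n$ as $q\to 1^{-}$, and then interchange the limit with the sum over the spectrum. The paper packages the interchange as monotone operator convergence $\vert\DD_q\vert\downarrow\vert\DD_1\vert$ plus normality of the trace, which is precisely the ``alternative'' monotone-convergence variant you mention, while your primary phrasing via Tannery/dominated convergence on the explicit eigenvalue series is an equivalent and equally valid justification.
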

\begin{proof}
Note that $\vert \DD_1\vert$  and $\vert \DD_q\vert$ have the same eigenspaces so they commute. The eigenvalues of $\vert\DD_q\vert- \vert \DD_1\vert$ are $\lambda_n(q)\vc \vert w\vert ([n]_q-n)$ for $n\in \N^*$. The following lemma implies that $\vert \DD_q \vert\downarrow \vert\DD_1 \vert$ as $q\uparrow 1$ and by the normality of the trace, $\Tr \, e^{-t\, \vert\DD_q\vert}  \uparrow \Tr \, e^{-t \, \vert\DD_1\vert} $.
\end{proof}

\begin{lemma}
For each $n\in \N^*$, $\lambda_n(q)$ is non-negative function of q which is strictly decreasing to $0$ when $0<q \uparrow 1$.
\end{lemma}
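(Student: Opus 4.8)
\emph{Proof proposal.} Fix $n\in\N^{*}$. The plan is to expand the $q$-number as a symmetric sum of powers of $q$ and then pair its terms. First I would use the elementary identity
\[
[n]_q=\frac{q^{-n}-q^n}{q^{-1}-q}=\sum_{k=0}^{n-1}q^{\,n-1-2k},
\]
valid for $0<q<1$, whose exponents $n-1,n-3,\dots,-(n-1)$ are symmetric about $0$. Pairing the $k$-th summand with the $(n-1-k)$-th one gives $[n]_q=\epsilon_n+\sum_{j\in J_n}(q^{j}+q^{-j})$, where $\epsilon_n\in\{0,1\}$ equals $1$ iff $n$ is odd (this is the unpaired central term $q^{0}$) and $J_n$ is a set of $\lfloor n/2\rfloor$ positive integers; subtracting $n=\epsilon_n+2\lfloor n/2\rfloor$ yields
\[
[n]_q-n=\sum_{j\in J_n}\bigl(q^{j}+q^{-j}-2\bigr),
\]
so that $\lambda_n(q)=\vert w\vert\sum_{j\in J_n}\bigl(q^{j}+q^{-j}-2\bigr)$.

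The three assertions then follow termwise. Non-negativity is immediate from $q^{j}+q^{-j}-2=\bigl(q^{j/2}-q^{-j/2}\bigr)^{2}\geq0$, which is strict for $q\neq1$; hence $\lambda_n\geq0$ on $(0,1)$ and $\lambda_n>0$ there as soon as $n\geq2$ (for $n=1$ one has $\lambda_1\equiv0$, so the word ``strictly'' is meant for $n\geq2$). For monotonicity I would differentiate each summand,
\[
\tfrac{d}{dq}\bigl(q^{j}+q^{-j}\bigr)=j\,q^{-j-1}\bigl(q^{2j}-1\bigr)<0\qquad\text{for }0<q<1,\ j\geq1,
\]
so every term of the finite sum is strictly decreasing on $(0,1)$, whence $\lambda_n$ is strictly decreasing there whenever $J_n\neq\emptyset$, that is for $n\geq2$. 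Finally, since $q^{j}+q^{-j}-2\to0$ as $q\uparrow1$ for each fixed $j$, the finite sum gives $\lambda_n(q)\to0$.

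None of these steps is a genuine obstacle; the only point requiring a little care is the even/odd bookkeeping in the pairing, which is exactly what $\epsilon_n$ and $J_n$ record. (Alternatively both non-negativity and monotonicity could be read off from convexity of $t\mapsto q^{t}$ on the symmetric exponent set, but the explicit pairing keeps the argument self-contained and makes the $q\uparrow1$ limit transparent.)
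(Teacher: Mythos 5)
Your proof is correct, and it takes a genuinely different (and in one respect more complete) route than the paper's. The paper passes to the variable $x=-\log q$ and writes $\lambda_n(q)=\bigl(\sinh(nx)-n\sinh x\bigr)/\sinh x$, reads off non-negativity from the positivity of numerator and denominator, and obtains the convergence to $0$ from the asymptotic $\lambda_n(q)\sim\tfrac{n^3-n}{6}\log^2 q$ as $q\to1$; strict monotonicity in $q$ is not actually argued there. Your pairing of the symmetric exponents in $[n]_q=\sum_{k=0}^{n-1}q^{n-1-2k}$, giving $[n]_q-n=\sum_{j\in J_n}(q^{j}+q^{-j}-2)$ with each summand equal to $(q^{j/2}-q^{-j/2})^2$, delivers all three claims termwise by elementary means, including the monotonicity that the paper omits; what you give up is the explicit rate $\tfrac{n^3-n}{6}\log^2 q$, which the hyperbolic-sine form makes immediate (though it is not needed for the application in Proposition 4.6). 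Your observation that $\lambda_1\equiv0$, so that ``strictly decreasing'' can only be meant for $n\geq2$, is accurate and is in fact glossed over in the paper (its asymptotic constant $n^3-n$ also vanishes at $n=1$); this edge case is harmless for the intended use, since only non-negativity and the limit matter for the monotone-convergence argument.
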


\begin{proof}
We can take $\vert w\vert=1$. With $x=-\log q\in [0,\infty[$, $$\lambda_n(q) = \tfrac{\sinh(n\log q)-n\sinh(\log q)}{\sinh(\log q)}=\tfrac{\sinh(nx)-n\sinh (x)}{\sinh(x)}$$ is positive since both numerator and denominator are positive. \\
Moreover, $\lambda_n(q) \underset{q\to 1}{\sim} \tfrac{n^3-n}{6}\,\log(q)^2>0$ for $n\in \N^*$.
\end{proof}
Note however, that the Theorem \ref{PropHeat} holds only for $0<q<1$, so one should not expect to have a well-defined limit of the RHS of \eqref{heat1}. The situation is somewhat similar to that encountered in the case of $SU_q(2)$ \cite{ILS}.

\subsection{Heat kernel for a simplified Dirac operator}

The eigenvalues of the Dirac operator for the Podle\'s sphere $\DD$ have exponential
growth (\ref{q_num}), slightly modified by a bounded term. For this reason, it is often
convenient to consider the operator $\DD_S$, which has  a similar form as
(\ref{Dirac}):
\begin{equation} \label{DS}
\DD_S \vc \left (\begin{smallmatrix} 0 & \bar{w} \,D_s \\  w \,D_s & 0 \end{smallmatrix}\right),
\qquad D_S:\, \ket{l,m}\in \H_\oh \to  \tfrac{q^{-(l+1/2)}}{q^{-1} - q}  \, \ket{l,m}\in \H_\oh.
\end{equation}
As diagonal operators, $\DD$ and $\DD_S$ commute and are related by
$$
\DD = \DD_S - \left( \tfrac{|w| q}{1-q^2} \right)^2 \DD_S^{-1}.
$$
The operator $\DD_S^{-1}$ is compact and trace-class, so it is obvious that taking $\DD_S$ instead of $\DD$ we preserve most of the properties of the spectral triple with the exception of the order-one condition.

\begin{remark}
\label{diff}
The heat-trace expansion of $|\DD_S|$ and $|\DD|$ are identical up to terms
regular at $t=0$ since 
$$0< \Tr e^{-t \,|\DD|} - \Tr e^{-t\, |\DD_S|} = 
\mathcal{O}(t \log^2 t), \; \text{ for } t>0. $$
\end{remark}

\begin{proof}
If $X$ is a bounded selfadjoint operator, then $ \lim_{t \to 0} \tfrac{1}{t} \norm{1 - e^{-tX}}  \leq \norm{X}$. This is due to 
$\norm{1 - e^{-tX}} = \norm{\sum_{n=1}^\infty (-1)^n \tfrac{t^n}{n!} X^n}\leq  \sum_{n=1}^\infty \tfrac{t^n}{n!} \norm{X}^n =  e^{t \norm{X}} -1.$
Now, we have
$$
0< \Tr e^{-t \,|\DD|} - \Tr e^{-t\, |\DD_S|}= \Tr \big(e^{-t \,|\DD|}\, (1 - e^{-t \,(|\DD_S| - |\DD|)})\big)  \leq  \norm{ 1 - e^{-t (|\DD_S| - |\DD|)}}\Tr e^{-t \,|\DD|}.
$$ 
Since $X=|\DD_s|-|\DD|$ is bounded, we know that above norm behaves like $\norm{|\DD_s|-|\DD|} t$ at $t \to 0^+$. Therefore the limit at $t \to 0^+$ of the difference of heat-traces vanishes because, using \eqref{heat1}, both $t \log t$ as well as $t \log^2 t$ vanish 
at $t=0$.
\end{proof}

Note that $\underset{q\to1}{\lim}\norm{e^{-t\,\vert\DD_S\vert}}=0$ for any $t>0$.
\\
The corresponding version of \eqref{heat1} for the heat-trace of simplified Dirac operator is
\begin{theorem}
\label{PropHeatDs}
For any $t>0$,
\begin{align*}
\Tr e^{-t\, |\DD_S|} = \tfrac{1}{\log^2 q} \,
\big[ 2 \log^2 (ut) +  h_S \big(\log (ut)\big)  \, \log (ut)  + c_S \big(\log (ut)\big)\big] + R(ut) \label{heat1s}
\end{align*}
where $u = \frac{|w|\,q}{1-q^2}$ and $h_S,\,c_S$ are the following periodic bounded $C^\infty$-functions on $\R$
\begin{align}
h_S(x) & \vc  4 \gamma
- 4 \sum_{a \in \Z^*} \Gamma (\tfrac{2\pi i}{\log q} \,a) \,e^{2\pi i a x}, \label{hS} \\
c_S(x)  & \vc  \tfrac{1}{3} ( \pi^2 + 6 \gamma^2 - \log^2 q) - 4
\sum_{a \in \Z^*} \Gamma (-\tfrac{2\pi i}{\log q} \,a)\,  \psi^{(0)} (\tfrac{2\pi i}{\log q}\, a)\, e^{2\pi i a x}, \label{cS}
\end{align}
while $R(ut)$ is  the regular part (with $\underset{t\to 0}{\lim}\, R(ut)=0$):
$$
R(x) = \sum_{m=1}^\infty \tfrac{(-1)^m \, q^{-m}}{(m)!(1-q^{-m})^2} \,\,x^{m}.
$$
\end{theorem}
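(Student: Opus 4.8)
\noindent\textit{Proof strategy.} The plan is to repeat, in a much lighter form, the route of Theorem~\ref{PropHeat}: compute the meromorphic zeta function $\zeta_{\DD_S}(s)\vc\Tr|\DD_S|^{-s}$, invert its Mellin transform, and read off the heat trace as a sum of residues of $s\mapsto t^{-s}\Gamma(s)\zeta_{\DD_S}(s)$. The crucial simplification is that $\zeta_{\DD_S}$ is \emph{elementary}: since $|\DD_S|$ acts as $u\,q^{-(l+\oh)}$ with multiplicity $2(2l+1)$ on $\mathrm{span}\{\ket{l,m}_+,\ket{l,m}_-\}$, writing $l=k+\oh$ and summing $\sum_{j\ge 1} j\,x^{j}=x(1-x)^{-2}$ gives, for $\Re(s)>0$,
\[
\zeta_{\DD_S}(s)=4\sum_{k=0}^{\infty}(k+1)\,\big(u\,q^{-(k+1)}\big)^{-s}=\frac{4\,q^{s}}{u^{s}\,(1-q^{s})^{2}}\,,
\]
which is \emph{already} meromorphic on all of $\C$, holomorphic except for double poles at the points $s\in\tfrac{2\pi i}{\log q}\Z$ (where $q^{s}=1$).

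First I would record the analogue of Lemma~\ref{HeatLemma}: $t\mapsto t^{\varepsilon}\Tr e^{-t|\DD_S|}$ is continuous and bounded for each $\varepsilon>0$. This follows either by running the estimate of Lemma~\ref{HeatLemma} on $\Tr e^{-t|\DD_S|}=4\sum_{k\ge 0}(k+1)e^{-t\,u\,q^{-(k+1)}}$, which has exactly the same shape, or directly from Remark~\ref{diff} together with Lemma~\ref{HeatLemma}, since $|\DD_S|-|\DD|$ is bounded. Then, exactly as in Lemma~\ref{MellinLemma1}, the Mellin inversion theorem \cite[Theorem 2]{Flajolet} gives $\Tr e^{-t|\DD_S|}=\tfrac{1}{2\pi i}\int_{c-i\infty}^{c+i\infty}t^{-s}\,\Gamma(s)\,\zeta_{\DD_S}(s)\,ds$ for every $c>0$. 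Next I would run the contour argument of Proposition~\ref{MellinLemma2} essentially verbatim, shifting the line to $\Re(s)=-R$ with $R=N+\oh$, $N\in\N$, and closing by horizontal segments at $\Im(s)=\pm\eta R$, $\eta\vc-\tfrac{2\pi}{\log q}>0$; the bounds are now much easier since $\zeta_{\DD_S}$ is a single closed-form term rather than an infinite Bessel-type series. On the horizontal segments $q^{x\pm i\eta R}=-q^{x}$, so $|1-q^{x\pm i\eta R}|^{2}=(1+q^{x})^{2}\ge 1$; on the vertical line $|1-q^{-R+iy}|^{2}\ge (q^{-R}-1)^{2}$, so $|\zeta_{\DD_S}(-R+iy)|=\OO\big((uq)^{R}\big)$; and, together with the exponential decay of $|\Gamma|$ on vertical lines and the $R^{-R}$ decay of $|\Gamma(-R)|$ extracted from \eqref{Gamma}--\eqref{GammaUp} and Euler's reflection formula, all three contour pieces tend to $0$. (Equivalently, this is a direct instance of the harmonic-sum machinery of \cite[Example 12]{Flajolet}.) One concludes $\Tr e^{-t|\DD_S|}=\sum_{s_{0}}\underset{s=s_{0}}{\Res}\,t^{-s}\Gamma(s)\zeta_{\DD_S}(s)$, with $s_{0}$ ranging over $-\N^{*}\cup\tfrac{2\pi i}{\log q}\Z$.

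It then remains only to evaluate the residues, mechanically and with \cite{Mathematica} as for Theorem~\ref{PropHeat}. The function $f_t(s)=t^{-s}\Gamma(s)\zeta_{\DD_S}(s)$ has three families of poles. The simple poles at $s=-m$, $m\ge 1$ (where $\zeta_{\DD_S}$ is regular) contribute $\tfrac{(-1)^{m}q^{-m}}{m!(1-q^{-m})^{2}}(ut)^{m}$, hence after summation the power series $R(ut)$, which is entire because $\tfrac{q^{-m}}{(1-q^{-m})^{2}}=\OO(q^{m})$; in particular $R(0)=0$. The triple pole at $s=0$ gives, via the Laurent expansions $\Gamma(s)=\tfrac1s-\gamma+(\tfrac{\gamma^{2}}{2}+\tfrac{\pi^{2}}{12})s+\OO(s^{2})$ and $\zeta_{\DD_S}(s)=\tfrac{4}{\log^{2}q}\big(\tfrac{1}{s^{2}}-\tfrac{\log u}{s}+\tfrac{\log^{2}u}{2}-\tfrac{\log^{2}q}{12}+\OO(s)\big)$, the $\tfrac{2}{\log^{2}q}\log^{2}(ut)$ term and the constant ($a=0$) parts $4\gamma$ and $\tfrac13(\pi^{2}+6\gamma^{2}-\log^{2}q)$ of $h_{S}$ and $c_{S}$. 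Finally, the double poles at $s=\tfrac{2\pi i}{\log q}a$, $a\in\Z^{*}$ (where $\Gamma$ is regular), are handled using the periodicity $q^{s+2\pi i a/\log q}=q^{s}$, which yields $\zeta_{\DD_S}(\tfrac{2\pi i}{\log q}a+\sigma)=u^{-2\pi i a/\log q}\,\zeta_{\DD_S}(\sigma)$; expanding $\Gamma$ and $t^{-s}$ about such a pole produces the oscillatory Fourier series in $h_{S}$ and $c_{S}$, with coefficients involving $\Gamma(\tfrac{2\pi i}{\log q}a)$ and $\Gamma(-\tfrac{2\pi i}{\log q}a)\psi^{(0)}(\tfrac{2\pi i}{\log q}a)$. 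Collecting the three groups gives the stated formula, and the absolute convergence, boundedness and $C^{\infty}$-periodicity of $h_{S}$ and $c_{S}$ follow exactly as in Remark~\ref{rem3}, using $|\Gamma(1+iy)|^{2}=\tfrac{\pi y}{\sinh(\pi y)}$ and $\psi^{(0)}(iy)=\OO(\log|y|)$, which force the Fourier coefficients to decay faster than any power of $a$.

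I expect the main obstacle to be the contour estimate underlying the sum-of-residues representation, i.e.\ the analogue of Proposition~\ref{MellinLemma2}; relative to that proposition it is genuinely easier here, the Bessel-series estimates being replaced by the single rational expression for $\zeta_{\DD_S}$, but it is still where the real work lies. The only other fiddly point is bookkeeping the triple pole at $s=0$ so as to land on the precise constants in $h_{S}$ and $c_{S}$, which is purely mechanical.
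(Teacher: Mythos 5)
Your proposal is correct and follows essentially the same route as the paper: the paper likewise identifies $\zeta_{\DD_S}(s)=4\,(\tfrac{1-q^2}{|w|})^s(1-q^s)^{-2}$ as the $n=0$ term of \eqref{zeta_D1} and reads the heat trace off as the sum of residues of $t^{-s}\Gamma(s)\zeta_{\DD_S}(s)$ — the triple pole at $s=0$, the double poles on the imaginary axis, and the simple poles of $\Gamma$ at $-\N_+$ — obtained by specializing the coefficients (\ref{g}--\ref{ca}) to $m=0$ and taking the $k=0$ part of \eqref{tDn} for $R$. If anything, you supply more detail (the explicit Laurent expansions at the triple pole and the simplified contour bounds) than the paper, which simply defers to the proof of Theorem \ref{PropHeat}.
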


\begin{proof}
As the proof goes directly along the lines of previous Theorem \ref{PropHeat}, we skip most of the arguments, which can be easily repeated. The meromorphic extension of the zeta-function associated with $\DD_S$ reads 
$$
\zeta_{\DD_S}(s) = \tfrac{4}{|\omega|} \, (1-q^2)^s\,(1-q^s)^{-2},
$$
which is precisely the $n=0$ term of \eqref{zeta_D1}. Thus, the residues related to the poles of $\zeta_{\DD_S}$ are just the coefficients of (\ref{g}--\ref{ca}) with  $m=0$:
\begin{align*}
g_0 & = 2, \\
h_{0,0} & = 4 \log u + 4\gamma, \\
c_{0,0} & = 2 \log^2 u + \tfrac{1}{3} \pi^2 - \tfrac{1}{3} \log^2 q + 4 \log u \gamma + 2\gamma^2, \\
h_{0,a} & = -4 \Gamma(-i \tfrac{2 \pi}{\log q} \,a), \\
c_{0,a} & = -4 \Gamma(-i \tfrac{2 \pi}{\log q}\, a) \left( \log u - \psi^{(0)}(-i \tfrac{2 \pi}{\log q}\, a) \right).
\end{align*}
This agrees with the formulae (\ref{hS}--\ref{cS}) and the coefficients (\ref{g}--\ref{ca}) and with Remark \ref{diff}, since the coefficients with $m>0$ contribute to the heat kernel with coefficients proportional to $t^m$.

The leading non-oscillatory parts of the heat kernel expansion of $|\DD_S|$ come from the residue at $s=0$ of 
$f_t(s) = t^{-s}  \Gamma(s)\zeta_{\DD_S}(s)$, whereas the oscillatory parts arise from the poles along the imaginary axis. Since the $\Gamma$-function is rapidly decreasing along the imaginary axis for both its real and imaginary parts, the resulting periodic function is a bounded periodic $C^\infty$-function on $\R$. 

Finally, the term $R(t)$ results from the residues of $f_t(s)$ at $s = -n \in - \N_+$, which can be obtained by taking only the $k=0$ part from the formula \eqref{tDn}.
\end{proof}

\section{Spectral action}
The spectral action for a spectral triple is defined as a functional on the space of all 
admissible Dirac operators:
\begin{align}\label{Sb}
S \vc \Tr f \left( \vert \DD\vert / \Lambda \right),
\end{align}
where $0<\Lambda$ is a cut-off and $f$ is a (positive) smooth cut-off function \cite{ConnesSA}. 
The formula (\ref{Sb}), although simple in its form, is difficult to calculate exactly due its non-local 
character. Recently, some tools have been developed \cite{ConnesNonPert,MPT1,MPT2} to perform 
non-perturbative computations of spectral action. Unfortunately, the fundamental assumption about the eigenvalues of the Dirac operator (polynomial growth) is not satisfied for the Podle\'s sphere, so we cannot use those methods. However, since we have at hand an exact formula for the heat kernel \eqref{heat1} we can use the (distributional) 
Laplace transform to compute the spectral action \textit{exactly}.

Let us also mention, that the usual asymptotic expansion of the spectral action (\ref{Sb}) for large $\Lambda$  is\cite{ConnesSA}
\begin{align}\label{Spert}
S \underset{\Lambda \to +\infty}{\sim} \sum_{k \in \Sd^+} f_k \,\Lambda^k \ncint |\DD|^{-k} + f(0) \,\zeta_{\DD}(0) 
+ \OO(\Lambda^{-1}),
\end{align}
and such behaviour cannot hold for the standard Podle\'s sphere, since the spectral triple neither has a simple dimension 
spectrum nor is regular.

\subsection{Spectral action computation}

To be able to use the results on the heat kernel of $\DD$, we have to put some restrictions on the regularizing function $f$. \\
Denote the Laplace transform of a measure $d\phi$ by 
\begin{align*}
\Lc(d\phi)(x)\vc \int_0^{\infty}  \,e^{-sx} \, d\phi(s), \quad \text{for } x \in \R^+.
\end{align*}
\begin{definition}
Let $\mathcal{C}$ be the set of functions $f= \Lc(d\phi)$ where $d\phi$ is a non-negative (not necessarily finite) Borel measure on $\R^+$, such that   
\begin{align}
\label{Phi}
 \int_0^{\infty} e^{s\,t} \,d\phi(s)< \infty,\text{ for any } t\geq 0.
\end{align}
\end{definition}

By the Hausdorff-Bernstein-Widder theorem \cite[p. 160]{Widder}, when $d\phi$ is finite, $f = \Lc(d\phi)$ always exists (independently of \eqref{Phi}) and is a completely monotonic function (see also \cite{ILV} for definition and properties). Let us stress that $d\phi$ is allowed to have support of Lebesgue measure 0, so the class $\mathcal{C}$ contains
\begin{align*}
f(x) = p(x) \, e^{-ax}, \text{ with } a>0 \text{ and } p(x)=\sum_{k=0}^d c_k\,x^k\text{ with } c_k\geq 0, \text{ so }d\phi = \sum_{k=0}^{d} c_k\, \delta_a^{(k)}.
\end{align*}
In particular, the heat operator case corresponding to $f(x) = e^{-t \,x}$ for $t>0$ is included in $\mathcal{C}$.
\\
All of  the functions $f$ resulting from a Laplace transform of positive Borel measures $d\phi$ with compact support in $]0,\infty[$ are encompassed by $\mathcal{C}$ since for these, the constraint \eqref{Phi} is clearly satisfied. As an example take for instance
\begin{align*}
f(x) = \tfrac{1}{x}(e^{-a x}-e^{-b x}), \text{ with } 0 < a < b, \text{ so } d\phi(s) = \Theta(s-a) - \Theta(s-b).
\end{align*}
In fact, any function $f$ in this subclass is of the form
\begin{align*}
f(x) = \tfrac{1}{x} \, \sum_n (e^{-a_n x} - e^{-b_n x}), \text{ with } 0 < a_0 < b_0 < a_1 < b_1 < \ldots,
\end{align*}
where the sum over $n$ might be infinite whenever it is convergent for any $x>0$.
\\
Moreover, $\mathcal{C}$ contains other functions like for instance
\begin{align*}
f(x) = e^{\frac{x^2}{4 a}}\left(1-\text{erf}(\tfrac{x}{2 \sqrt{a}}) \right), \text{ where erf is the error function}, \text{ so }d\phi(s) = \sqrt{ \tfrac{4 a}{\pi}} \, e^{-a s^2}\,ds.
\end{align*}

We now compute the spectral action, written with some energy scale $\Lambda$ even if the result is not asymptotic but exact.
\begin{theorem}
\label{spectralaction}
For $f \in \mathcal{C}$ and any $\Lambda>0$, the spectral action on standard Podle\'s sphere $\Sq$ reads
\begin{align}
\label{SA}
\Tr f \left( |\DD| / \Lambda \right) & = \sum_{\alpha \in \Sd_1}\, \sum_{p = 0}^{2} a_{\alpha,p} \,
\sum_{k = 0}^{p} (-1)^{p-k}  \tbinom{p}{k}  f_{\alpha,k} \,(\log \Lambda)^{p-k}\, \Lambda^{\alpha}   \\
& = \sum_{m \in \N} \, \sum_{n \in \Z} \, \sum_{p = 0}^{2} \notag \, a_{-2m+ \tfrac{2 \pi i}{\log q}\, n,p} \,
\sum_{k = 0}^{p} (-1)^{p-k}  \tbinom{p}{k}  \,f_{-2m+ \tfrac{2 \pi i}{\log q}\, n,k} \,(\log \Lambda)^{p-k} \,\Lambda^{-2m+ \tfrac{2 \pi i}{\log q}\, n} 
\end{align}
where $f_{\alpha,k} \vc \int_0^{\infty} s^{-\alpha} \log^k(s) \, d\phi(s)$ and $f = \Lc(\phi)$. 
The coefficients
\begin{align} \label{Scoef}
a_{\alpha,p} \vc p! \, \,\underset{s = \alpha}{\Res} \, \big( (s - \alpha)^p\, \Gamma(s)\, \zD(s) \big)
\end{align}
are related to formulas ({\rm \ref{g}--\ref{ca}}) by (recall that $ u = \tfrac{|w|\,q}{1 - q^2}$):
\begin{align}
&\left.
\begin{array}{ll}
a_{\alpha,2} = \tfrac{ u^{-\alpha}}{\log^2 q}  \,g_m \vspace{ 0.2cm}\\
a_{\alpha,1} = \tfrac{ u^{-\alpha}}{\log^2 q} \,h_{m,0}\hspace{0.3cm}\vspace{ 0.2cm} \\
a_{\alpha,0} =\tfrac{ u^{-\alpha}}{\log^2 q} \,c_{m,0} + d_m
\end{array}\right\}
 \text{ for } \alpha = -2m \in -2\N, \label{coef1}\\
& \left.
\begin{array}{ll}
a_{\alpha,2} = 0 \vspace{ 0.2cm}\\
a_{\alpha,1} = \tfrac{u^{-\alpha}}{\log^2 q} \, \,h_{m,-n}\vspace{ 0.2cm} \\
a_{\alpha,0} = \tfrac{u^{-\alpha}}{\log^2 q} \, c_{m,-n}
\end{array}\right\}
\text{ for } \alpha 
= -2m + \tfrac{2 \pi i}{\log q}\, n, \text{ with } m \in \N, n \in \Z^*. \label{coef2}
\end{align}
\end{theorem}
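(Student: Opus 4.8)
The plan is to express the spectral action as an integral of the heat trace against $d\phi$ and then to insert the \emph{exact} residue representation of $\Tr e^{-t|\DD|}$ from Proposition~\ref{MellinLemma2}. Since $f=\Lc(d\phi)$, the Borel functional calculus gives $f(|\DD|/\Lambda)=\int_0^\infty e^{-\sigma|\DD|/\Lambda}\,d\phi(\sigma)$; as $d\phi\geq0$ and each $e^{-\sigma|\DD|/\Lambda}$ is a positive trace-class operator, Tonelli's theorem lets me take the trace under the integral, so
\begin{align*}
\Tr f(|\DD|/\Lambda)=\int_0^\infty \Tr e^{-(\sigma/\Lambda)|\DD|}\,d\phi(\sigma)\ \in[0,\infty].
\end{align*}
Condition \eqref{Phi} already forces $d\phi$ to be a finite measure (take $t>0$), hence $f$ bounded; combining Lemma~\ref{HeatLemma} (control of $\Tr e^{-t|\DD|}$ near $t=0$) with the exponential bound of Lemma~\ref{t->infinity} (for $t\geq1$) controls the integrand near $\sigma=0$ and near $\sigma=\infty$, so the right-hand side is finite for the functions of $\mathcal{C}$ at hand and furnishes the domination used below.

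Next I substitute $\Tr e^{-t|\DD|}=\sum_{\alpha\in\Sd_1\cup-\N}\Res_{s=\alpha}\bigl(t^{-s}\Gamma(s)\zD(s)\bigr)$ of Proposition~\ref{MellinLemma2} with $t=\sigma/\Lambda$ — the residues at the odd negative integers, which lie in $-\N\setminus\Sd_1$, vanish since $\wt{d}_{2k+1}=0$, so the sum runs effectively over $\alpha\in\Sd_1$ — and exchange $\int d\phi$ with $\sum_\alpha$. I will justify this exchange by dominated convergence, using the estimates that make the residue series absolutely convergent for each fixed $t$ (the large-index asymptotics of the Bessel-type functions $J_{\nu}$, $\wt{J}^{(1)}_{\nu}$ and of the $d_n$ established in Remark~\ref{rem3}, now evaluated at $t=\sigma/\Lambda$) together with the finiteness of the moments $\int_0^\infty\sigma^{2m}|\log\sigma|^{k}\,d\phi(\sigma)$ controlled through \eqref{Phi}. \textbf{This interchange is the main obstacle}: the heat-trace representation is a genuinely doubly-infinite sum of residues, absolutely convergent for each $t$ but not uniformly so as $t\to0$, while $d\phi$ is only subject to the mild growth condition \eqref{Phi}.

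It then remains to evaluate each pole contribution. For $\alpha\in\Sd_1$, write the Laurent expansion $\Gamma(s)\zD(s)=\sum_{\ell\geq-3}b^{(\alpha)}_{\ell}(s-\alpha)^{\ell}$ (a third-order pole on $-2\N$, second-order elsewhere in $\Sd_1$) together with $(\sigma/\Lambda)^{-s}=(\sigma/\Lambda)^{-\alpha}\sum_{j\geq0}\tfrac{1}{j!}\bigl(\log\Lambda-\log\sigma\bigr)^{j}(s-\alpha)^{j}$; picking the coefficient of $(s-\alpha)^{-1}$ yields
\begin{align*}
\Res_{s=\alpha}\bigl((\sigma/\Lambda)^{-s}\Gamma(s)\zD(s)\bigr)=(\sigma/\Lambda)^{-\alpha}\sum_{j=0}^{2}\tfrac{1}{j!}\,b^{(\alpha)}_{-1-j}\,\bigl(\log\Lambda-\log\sigma\bigr)^{j},
\end{align*}
a polynomial of degree $\leq2$ in $\log\Lambda-\log\sigma$ whose coefficients are fixed by the Laurent data of $\Gamma\zD$ at $\alpha$, i.e. by the residues $a_{\alpha,p}$ of \eqref{Scoef}. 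Expanding $(\log\Lambda-\log\sigma)^{p}$ by the binomial theorem and integrating $\sigma^{-\alpha}(\log\sigma)^{k}$ against $d\phi$ produces the coefficients $f_{\alpha,k}=\int_0^\infty\sigma^{-\alpha}\log^{k}(\sigma)\,d\phi(\sigma)$ and the exact binomial shape of \eqref{SA}; the second displayed line of the theorem is then merely the parametrisation $\Sd_1=-2\N+i\tfrac{2\pi}{\log q}\Z$.

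To conclude, I identify the numbers $b^{(\alpha)}_{-1-j}$ with the residue data already computed in the proof of Theorem~\ref{PropHeat}: at a real pole $\alpha=-2m$ the order-two pole of $\zD$ combined with the regular part of $\Gamma$ gives $g_m,h_{m,0},c_{m,0}$ of \eqref{g}--\eqref{c}, while the simple pole of $\Gamma$ combined with the regular part of $\zD$ gives $d_m=\wt{d}_{2m}$ of \eqref{tDn}; at a complex pole $\alpha=-2m+i\tfrac{2\pi}{\log q}n$ there is no $\Gamma$-pole, so only $h_{m,-n},c_{m,-n}$ appear and $a_{\alpha,2}=0$. Reading \eqref{Scoef} off in these terms gives the relations \eqref{coef1}--\eqref{coef2}. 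The last point is the convergence of the triple series over $m,n,p$: for $\Lambda$ fixed, the $\Gamma$-factor inside $a_{\alpha,p}$ decays exponentially in $n$ and like $1/m!$ (or $1/(m!)^2$) in $m$, which dominates the at-most-factorial growth of the moments $f_{\alpha,k}$ bounded through \eqref{Phi}.
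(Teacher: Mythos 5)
Your proposal is correct and follows essentially the same route as the paper: write $f=\Lc(d\phi)$, pass the trace through the $d\phi$-integral using positivity/normality and the small-$t$ and large-$t$ heat-trace bounds, insert the exact residue expansion of $\Tr e^{-t|\DD|}$, and justify the interchange of $\int d\phi$ with the sum over poles by exactly the domination you describe (decay of the Laurent/residue coefficients in $m$ and $n$ against the moment growth of $\int\sigma^{2m}|\log\sigma|^k\,d\phi(\sigma)$ permitted by \eqref{Phi}), which is what the paper's explicit estimates on (\ref{g}--\ref{tDn}) accomplish. Your direct Laurent-expansion evaluation of each residue is just an unpacked version of the paper's use of \eqref{heatST}, and it correctly produces the binomial structure of \eqref{SA} (note only that the normalization \eqref{Scoef} in the statement is off by $(-1)^p/(p!)^2$ from the heat-trace coefficients \eqref{coef1}--\eqref{coef2} actually used in the proof, an inconsistency internal to the paper rather than to your argument).
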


\begin{proof}
Since $f=\Lc(d\phi)$, we can write formally $f(t |\DD|) = \int_0^{\infty} e^{-st \,|\DD|}\,d\phi(s)$ for 
$t>0$. 
\\
By Theorem \ref{PropHeat} and Lemma \ref{t->infinity}, we know that there exist non-negative constants $c_i$ and $s_0$,  $s_1$ such that 
\begin{align}
 \label{c3}
0<\Tr e^{-s\vert \DD\vert} \leq 
\left\{ \begin{array}{ll} c_1 \log^2 s  &\text{ for } 0<s<s_0,\\
c_2\,e^{-c_3\,s}   &\text{ for } s>s_1.
\end{array} \right.
\end{align}
By the assumption \eqref{Phi} both $\int_0^{s_0} \log^2(s) \,d\phi(s)$ and $\int_{s_1}^\infty  e^{-c_3\,s} \, d\phi(s)$ are finite
 since $\log^2$ is integrable at the origin, so $f(\vert\DD\vert)$ is trace-class since 
$\Tr\big(f(\vert\DD\vert)\big) \leq \int_0^\infty \Tr (e^{-s\,\vert \DD\vert})\,d\phi(s) <\infty$. 
Moreover, the trace being normal, $\Tr \big( f(t |\DD|) \big) =  \int_0^{\infty} \Tr \,e^{-st \,|\DD|}\, \, d\phi(s)$.

Using \eqref{heat_coef}  with coefficients $a_{\a,p}$ given by the formulas (\ref{coef1}, \ref{coef2}), we obtain
\begin{align}
 \label{heatST}
\Tr e^{-st\, |\DD|} &= \sum_{\alpha \in \Sd_1} \,\sum_{p = 0}^{2} a_{\alpha,p} \log^p(s t)\, (st)^{-\alpha} \nonumber \\
 &= \sum_{\alpha \in \Sd_1}\, \sum_{p = 0}^{2} a_{\alpha,p}\, \sum_{k = 0}^{p}  
\tbinom{p}{k} \log^{p-k}( t)\, (st)^{-\alpha} \,\log^k(s).
\end{align}

Now, to obtain the formula \eqref{SA} we need to commute the integral over $s$ with the sum over $\alpha$. To this end we resort to Lebesgue dominated convergence theorem, which needs a uniform bound of the absolute value of the partial sums.

For the coefficients (\ref{g}--\ref{tDn}) we obtain the following estimates with $\eta = -\tfrac{2  \pi}{\log q}$\,:
\begin{align*}
|g_m|  & = \tfrac{2}{(m!)^2}\,,\\
|h_{m,0}| & = \tfrac{4}{(m!)^2}\, \left\vert \log u-\psi(m+1) \right\vert \leq \tfrac{4}{(m!)^2} \left( |\log(u)| +\vert\psi(1) \vert\, (m+1) \right),\\
|c_{m,0}| & = \tfrac{1}{3 (m!)^2} \big\vert 6 \log ^2u - \log ^2 q+ 2 \pi ^2 - 12 \log (u)\, \psi (m+1)\\
&\hspace{3.25cm} 
+6 \psi(m+1)^2-6 \psi ^{(1)}(m+1) \big\vert\\
& \leq \tfrac{1}{3 (m!)^2} \big( 6 |\log ^2u| + \log ^2 q+ 2 \pi ^2 + 12 |\log (u)| \, \vert\psi(1) \vert \,(m+1) \\
&\hspace{3.25cm} +6 \psi(1)^2\,(m+1)^2  + 6 \psi ^{(1)}(1) \big),\\
|h_{m,n}|  & = \tfrac{4}{m!}\,\left\vert \Gamma (-m-i\eta\,n ) \right\vert \leq \tfrac{4 \sqrt{2 \pi}}{m!}\, \left( m^2 + \eta^2 n^2 \right)^{-m/2-1/4} \, e^{-\pi \eta n/2} \, e^{1/(6 \sqrt{m^2+\eta^2 n^2})}\\
 & \leq \tfrac{4 \sqrt{2 \pi}}{m!}\, \left( m^2 + \eta^2 \right)^{-m/2-1/4} \, e^{-\pi \eta n/2} \, e^{1/6m},\\
|c_{m,n}| &  = \tfrac{4}{m!}\, \left\vert \Gamma (-m-i\eta\,n) 
\big( \log u - \psi(-m -i\eta\,n) \big) \right\vert  \\
& \leq \tfrac{4 \sqrt{2 \pi}}{m!}\, \left( m^2 + \eta^2 \right)^{-m/2-1/4} \, e^{-\pi \eta n/2} \, e^{1/6m} \left( |\log u| + \vert\psi(i \eta)\vert\,( m + 1 - \eta \vert n\vert) \right),\\
 |d_n| & \leq 4 \, \tfrac{1 + q^4}{\left(1 - q^2\right)^2} \tfrac{1}{(2n)!} \left(\tfrac{u (1+q^2)}{q}\right)^{2n}
\end{align*}
where we have used \cite[(2.1.19)]{Paris}, Remark \ref{rem3} and rough estimates of polygamma functions 
\begin{align*}
& 0\leq \vert\psi(x)\vert\leq (x+1) \,\vert\psi(1)\vert,\quad  0 <\psi^{(1)}(x+1) \leq \psi^{(1)}(1), \quad \text{ for } x\geq 0, \\
& \vert \psi(-x-i\eta y) \vert \leq \vert \psi(i \eta) \vert ( x + 1 + \eta \vert y\vert),  \qquad \text{ for } x \geq 0, y\in \Z^*
\end{align*}
based on large argument behaviour of the latter (essentially, $|\psi(z)|$ grows logarithmically with $|z|$ it is sufficient to bound it by its value at $0$ multiplied by $1+ |\Re(z)| + |\Im(z)|$).

Now, let us provide a uniform bound for the absolute value of the partial sums of \eqref{heatST}:
\begin{align*}
\left\vert \sum_{m=0}^{M}\, \sum_{n=-N}^{N} \, a_{-2m + i \eta n,p} \,(st)^{2m - i \eta n} \right\vert 
& \leq \sum_{m=0}^{M}\, \sum_{n=-N}^{N} \, |a_{-2m + i \eta n,p}|\, (st)^{2m} \\
& \leq \sum_{m=0}^{\infty} \,\sum_{n=-\infty}^{\infty} \, |a_{-2m + i \eta n,p}|\, (st)^{2m} \cv K_p(st).
\end{align*}

In view of formulas (\ref{coef1},\ref{coef2}) and the estimations on (\ref{g}--\ref{tDn}) obtained above, we conclude that the last power series has an infinite convergence radius and moreover, there exist constants $C_1, \, C_2 > 0$ such that for any $p \in \set{0,1,2}$, we have
\begin{align*}
K_p(st) \leq C_1 \, e^{C_2 \, t s}, \quad \text{ for any } t>0.
\end{align*}
Moreover, one can always refine the constants as to have $K_p(st)\, |\log^k s| \leq \wt{C}_1 \,e^{\wt{C}_2 t s}$ 
valid for any $t>0$ and any $k = 0,1,2$.
Thus, by  the assumption \eqref{Phi}, we have
\begin{align*}
\int_0^{\infty}K_p(st) |\log^k s | \, d\phi(s)\leq \wt{C}_1 \int_0^{\infty}  e^{\wt{C}_2 \,t s} \,d \phi(s) < \infty
\end{align*}
for any $t>0$ and any $k,p = 0,1,2$.

Hence, the conditions of Lebesgue dominated convergence theorem are met and we obtain
\begin{align*}
\Tr \big( f(t |\DD|) \big) & =  \int_0^{\infty}  \Tr \,e^{-st \,|\DD|} \,d\phi(s)  = \sum_{\alpha \in \Sd_1}\, \sum_{p = 0}^{2} a_{\alpha,p} \sum_{k = 0}^{p}  
\tbinom{p}{k} \log^{p-k}( t)\, t^{-\alpha} \int_0^{\infty}  s^{-\alpha}\,\log^k(s)\, d\phi(s)
\end{align*}
and the result follows by setting $t = \Lambda^{-1}$.
\end{proof}
Remark that in \eqref{SA}, $\alpha \in \C$ so $\Lambda^{\alpha}=\Lambda^{\Re (\alpha)} \Lambda^{i \, \Im (\alpha)} \in\C$ exactly like in Theorem \ref{PropHeat} where the oscillations are hidden in the definitions of $\wt{J}_{i \wt{a}}^{(n)}$.

In \eqref{SA}, the coefficients of the spectral action are computed in terms of the Laplace transform of the function $f$. We postpone to the Appendix, the computation of these coefficients directly in function of $f$.

\begin{rem}
\label{rem4}
Another presentation of the exact spectral action:\\
\rm{The formula \eqref{SA} is particularly useful for comparison with the standard asymptotic expansion \eqref{Spert}. On the other hand, if one does not insists on having an explicit power-like expansion in $\Lambda$, one could use directly the exact formula \eqref{heat1} instead of \eqref{heatST} to obtain
\begin{align*}
\Tr \big( f(|\DD| / \Lambda) \big) & =  \int_0^{\infty}  \Tr \,e^{-s \,|\DD| / \Lambda} \,d\phi(s) \\
 = \frac{1}{\log^2 q} & \, \Bigg[\log^2 \Lambda \,  \int_0^{\infty} g(s/\Lambda)   \,d \phi(s)\\
 &  \hspace{0.3cm}- \log \Lambda \, \int_0^{\infty}  \left( g(s/\Lambda) \log(s) + h_0(s/\Lambda) + \sum_{a \in \Z} h_a(s/\Lambda) \right) \,d \phi(s)\\
 & \hspace{0.3cm} + \int_0^{\infty} \Bigg( 2 g(s/\Lambda) \log^2(s) + h_0(s/\Lambda) \log(s) + \sum_{a \in \Z} h_a(s/\Lambda) \log(s)  \\
& \hspace*{3cm} +  c_0(s/\Lambda) + \sum_{a \in \Z} c_a(s/\Lambda) + \log^2 q \sum_{n=1}^{\infty} d_n s^{2n} \Lambda^{-2n} \Bigg) \Bigg]\,d \phi(s).
\end{align*}
Let us note, that we do not have to satisfy the demanding conditions of the Lebesgue dominated convergence theorem to derive the above formula. In fact, we only need the convergence of all of above integrals for any $\Lambda >0$, which includes $\int_0^{s_0} \log^2(s) \,d \phi(s)< \infty $ and $\int_{s_1}^\infty e^{-c_3\,s} \,d \phi(s) < \infty$ by \eqref{c3}. 
So, the above formula is satisfied for a significantly larger class of functions than $\mathcal{C}$, which include for example
\begin{align*}
f(x) = (x+a)^{-r}\text{ for any } & a>0,\, r>0, \text{ with }d\phi(s)=\Gamma(r)^{-1}\,s^{r-1}\,e^{-as}\,ds.
\end{align*}
since
\begin{align*}
\int_0^\infty e^{st}\,d \phi(s)& =(a-t)^{-r},\quad\text{for any }t<a,\\
\int_0^\infty s^{-\alpha} \,d \phi(s)& = \tfrac{(a)^{\alpha -r} \Gamma (r-\alpha )}{\Gamma (r)}\,,\\
\int_0^\infty s^{-\alpha} \,\log s\,d \phi(s) & = \tfrac{(a)^{\alpha -r} \Gamma\, (r-\alpha )}{\Gamma (r)} \left(\psi ^{(0)}(r-\alpha )-\log
   a\right), \\
\int_0^\infty s^{-\alpha} \,\log^2s\,d \phi(s)& =\tfrac{(a)^{\alpha -r} \Gamma (r-\alpha )}{\Gamma (r)} \,\left(\psi ^{(1)}(r-\alpha )+[\log a-\psi ^{(0)}(r-\alpha )]^2\right).
\end{align*}
}
\end{rem}

Let us now compare the computed spectral action with the one for the simplified Dirac operator \eqref{DS}. The notation is the same as in Theorem \ref{spectralaction}.

\begin{Prop}
\label{spectralsimplified}
For $f \in \mathcal{C}$, the spectral action on standard Podle\'s sphere $\Sq$ with the simplified Dirac operator \eqref{DS} reads
\begin{multline}\label{SAS}
\Tr f \left( |\DD_S| / \Lambda \right) \\
 = \sum_{n \in \Z} \, \sum_{p = 0}^{2} \, \wt{a}_{\tfrac{2 \pi i}{\log q}\, n,p} \,
\sum_{k = 0}^{p} (-1)^{p-k}  \tbinom{p}{k}  \,f_{\tfrac{2 \pi i}{\log q}\, n,k} \,(\log \Lambda)^{p-k} \,\Lambda^{\tfrac{2 \pi i}{\log q}\, n} + \sum_{n = 1}^{\infty} \, \wt{b}_n f_{-n,0} \Lambda^{-n},
\end{multline}
with
\begin{align} \label{coef1S}
\wt{a}_{0,2} = \tfrac{g_0}{\log^2 q}, \quad \wt{a}_{0,1} =  \tfrac{h_{0,0}}{\log^2 q}, \quad \wt{a}_{0,0} =  \tfrac{c_{0,0}}{\log^2 q},
\end{align}
\begin{align}
&\left.
\begin{array}{ll}
a_{\alpha,2} = 0 \vspace{ 0.2cm}\\
a_{\alpha,1} = \tfrac{u^{-\alpha}}{\log^2 q} \, \,h_{0,-n}\vspace{ 0.2cm} \\
a_{\alpha,0} = \tfrac{u^{-\alpha}}{\log^2 q} \, c_{0,-n}
\end{array}\right\}
\text{ for } \alpha 
= \tfrac{2 \pi i}{\log q}\, n, \text{ with } m \in \N, n \in \Z^* \label{coef2S}
\end{align}
and $\wt{b}_n = ( \tfrac{ |\omega| }{1-q^2}) ^{n} \tfrac{(-1)^n}{(n)!(1-q^{-n})^2}$\,.
\end{Prop}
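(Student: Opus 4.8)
The plan is to follow the proof of Theorem \ref{spectralaction} almost verbatim, replacing the heat-trace expansion of Theorem \ref{PropHeat} by that of Theorem \ref{PropHeatDs}; the argument is in fact lighter, since for $\DD_S$ the zeta-function $\zeta_{\DD_S}(s)=\tfrac{4}{|\omega|}(1-q^2)^s(1-q^s)^{-2}$ has double poles only on the imaginary axis, so at $s=-n$, $n\in\N_+$, the only singularity of $t^{-s}\Gamma(s)\zeta_{\DD_S}(s)$ is the simple pole of $\Gamma$, whence the $\Lambda^{-n}$ terms carry no power of $\log\Lambda$ — this is why the last sum in \eqref{SAS} involves only $f_{-n,0}$. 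First, since $f=\Lc(d\phi)$ with $d\phi\geq 0$, I write formally $f(t|\DD_S|)=\int_0^\infty e^{-st|\DD_S|}\,d\phi(s)$ for $t>0$. To see $f(|\DD_S|)$ is trace-class I establish the two-sided control $0<\Tr e^{-s|\DD_S|}\leq c_1\log^2 s$ for $0<s<s_0$ and $\Tr e^{-s|\DD_S|}\leq c_2 e^{-c_3 s}$ for $s>s_1$: the first is read off from Theorem \ref{PropHeatDs}, where the $\log^2(us)$ term dominates while $h_S,c_S$ are bounded and $R(us)\to 0$ as $s\to0$, and the second follows from the argument of Lemma \ref{t->infinity} applied to $|\DD_S|$, whose lowest eigenvalue is $|\omega|/(1-q^2)>0$. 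Together with hypothesis \eqref{Phi} and integrability of $\log^2$ at the origin this gives $\Tr f(|\DD_S|)\leq\int_0^\infty \Tr e^{-s|\DD_S|}\,d\phi(s)<\infty$, and normality of the trace yields $\Tr f(t|\DD_S|)=\int_0^\infty \Tr e^{-st|\DD_S|}\,d\phi(s)$.

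Next I rewrite the expansion of Theorem \ref{PropHeatDs} in ``residue'' form,
\begin{align*}
\Tr e^{-st|\DD_S|}=\sum_{n\in\Z}\,\sum_{p=0}^{2}\wt a_{\tfrac{2\pi i}{\log q}n,p}\,\log^p(st)\,(st)^{-\tfrac{2\pi i}{\log q}n}+\sum_{m=1}^\infty \wt b_m\,(st)^{m},
\end{align*}
where the coefficients $\wt a_{\tfrac{2\pi i}{\log q}n,p}$ are exactly the $m=0$ specialisations of $g_m,h_{m,0},c_{m,0},h_{m,n},c_{m,n}$ of \eqref{g}--\eqref{ca} — this identification is the content of \eqref{coef1S}--\eqref{coef2S} — while $\wt b_m$ is the $k=0$ part of $\wt d_m$ in \eqref{tDn}, equivalently the coefficient of $R$ in Theorem \ref{PropHeatDs}, namely the residue of $t^{-s}\Gamma(s)\zeta_{\DD_S}(s)$ at the simple pole $s=-m$. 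I then expand $\log^p(st)=\sum_{k=0}^p\tbinom{p}{k}\log^{p-k}(t)\,\log^k(s)$ in the oscillatory part (no such expansion being needed for the $\wt b_m$ terms, which carry only $\log^0$), and I want to interchange $\int_0^\infty d\phi(s)$ with the double sum over $n,p$ and the sum over $m$.

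This interchange is justified by the Lebesgue dominated convergence theorem, and the required uniform bound on the partial sums is a special ($m=0$) case of the one already proved for Theorem \ref{spectralaction}: the estimates on $|g_m|,|h_{m,0}|,|c_{m,0}|,|h_{m,n}|,|c_{m,n}|,|d_n|$ established there, restricted to $m=0$, show that $\sum_{n\in\Z}|\wt a_{\tfrac{2\pi i}{\log q}n,p}|<\infty$ for each $p\in\{0,1,2\}$ and that $\sum_{m\geq1}|\wt b_m|\,(st)^m\leq C_1 e^{C_2 st}$ for all $t>0$; multiplying by the harmless factors $|\log^k s|$, $k=0,1,2$, and using \eqref{Phi} produces an integrable dominating function. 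Integrating term by term and using $f_{\alpha,k}=\int_0^\infty s^{-\alpha}\log^k(s)\,d\phi(s)$ (so $f_{-m,0}=\int_0^\infty s^m\,d\phi(s)$), then setting $t=\Lambda^{-1}$, gives \eqref{SAS}. I do not expect a genuine obstacle here: the only bookkeeping not already carried out in Theorem \ref{spectralaction} is matching $\wt b_m$ with the $k=0$ slice of \eqref{tDn} and checking convergence of $\sum_m\wt b_m(st)^m$, which is an easier instance of work already done; the one point requiring care is keeping the $m=0$ oscillatory contributions and the $\Gamma$-simple-pole contributions $\wt b_m$ rigorously separated, exactly as reflected by the two sums in the statement.
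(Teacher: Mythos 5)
Your proposal is correct and is essentially the argument the paper intends: the paper gives no separate proof of Proposition \ref{spectralsimplified}, relying implicitly on rerunning the proof of Theorem \ref{spectralaction} with the heat-trace of Theorem \ref{PropHeatDs} (equivalently, with $\zeta_{\DD_S}$ in place of $\zD$), which is precisely what you do, including the correct identification of the $\wt{a}$'s with the $m=0$ coefficients and of $\wt{b}_n$ with the $k=0$ slice of \eqref{tDn} coming from the simple poles of $\Gamma$ at $s=-n$.
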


It follows from Theorem \ref{spectralaction}, Proposition \ref{spectralsimplified} and Lemma \ref{diff} that:

\begin{Cor}
The spectral action for the simplified Dirac operator \eqref{DS} coincide with the one for 
the full Dirac operator \eqref{Dirac} up to terms, which vanish at $\Lambda \to \infty$;
that is, which are negligible for high energy scales.
\end{Cor}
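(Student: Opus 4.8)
The plan is to subtract the two \emph{exact} formulas \eqref{SA} and \eqref{SAS} and to check that what is left over is $\OO(\Lambda^{-1})$. First I would match the coefficient families appearing in the two statements. Since the defining sum for $d_0$ in \eqref{tDn} is empty, $d_0=0$, so \eqref{coef1} and \eqref{coef1S} give $a_{0,p}=\wt{a}_{0,p}$, while \eqref{coef2} and \eqref{coef2S} give $a_{\frac{2\pi i}{\log q}n,p}=\wt{a}_{\frac{2\pi i}{\log q}n,p}$ for every $n\in\Z^*$ and $p\in\{0,1,2\}$. Hence the portion of \eqref{SA} carried by the poles $\alpha\in\Sd_1$ with $\Re(\alpha)=0$ is, term by term, the first double sum of \eqref{SAS}, so that
\begin{align*}
\Tr f(|\DD|/\Lambda) - \Tr f(|\DD_S|/\Lambda) = E_1(\Lambda) - E_2(\Lambda),
\end{align*}
where $E_1(\Lambda)$ gathers the terms of \eqref{SA} with $\alpha=-2m+\frac{2\pi i}{\log q}n$, $m\ge1$, $n\in\Z$, and $E_2(\Lambda)\vc\sum_{n\ge1}\wt{b}_n\,f_{-n,0}\,\Lambda^{-n}$. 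Each of these two pieces consists of powers $\Lambda^{\alpha}$ with $\Re(\alpha)\le-1$, multiplied by bounded oscillating factors $\Lambda^{i\,\Im(\alpha)}$ and by at most $(\log\Lambda)^2$, so every individual term tends to $0$; the whole point is to control the two infinite families uniformly.

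For $E_2$ one uses $\frac{1}{(1-q^{-n})^2}=\frac{q^{2n}}{(1-q^n)^2}\le\frac{q^{2n}}{(1-q)^2}$ for $n\ge1$, which gives $|\wt{b}_n|\le\frac{1}{(1-q)^2\,n!}\big(\frac{|w|q^2}{1-q^2}\big)^n$, together with $|f_{-n,0}|\le\int_0^\infty s^n\,d\phi(s)$; since every term is non-negative, Tonelli yields $|E_2(\Lambda)|\le\int_0^\infty\big(\sum_{n\ge1}|\wt{b}_n|(s/\Lambda)^n\big)\,d\phi(s)\le\frac{c}{\Lambda}\int_0^\infty s\,e^{\frac{|w|q^2}{1-q^2}s/\Lambda}\,d\phi(s)$, which for $\Lambda$ large is $\le\frac{c}{\Lambda}\int_0^\infty s\,e^{s}\,d\phi(s)<\infty$ by \eqref{Phi}; thus $E_2(\Lambda)=\OO(\Lambda^{-1})$. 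For $E_1$ the same mechanism works: $|f_{-2m+\frac{2\pi i}{\log q}n,k}|\le\int_0^\infty s^{2m}|\log s|^k\,d\phi(s)$ independently of $n$, and the proof of Theorem \ref{spectralaction} already supplies, for each $p$, the estimate that the power series $\sum_{m}\big(\sum_{n\in\Z}|a_{-2m+\frac{2\pi i}{\log q}n,p}|\big)x^{2m}$ is entire and $\le C_1e^{C_2x}$, hence its $m\ge1$ tail is $\le A\,x^2e^{C_2x}$; a Tonelli swap then gives $|E_1(\Lambda)|\le c'\,\frac{(\log\Lambda)^2}{\Lambda^2}\int_0^\infty s^2\big(1+|\log s|^2\big)e^{s}\,d\phi(s)=\OO\big(\Lambda^{-2}(\log\Lambda)^2\big)$, again by \eqref{Phi}. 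Altogether $\Tr f(|\DD|/\Lambda)-\Tr f(|\DD_S|/\Lambda)=E_1(\Lambda)-E_2(\Lambda)=\OO(\Lambda^{-1})\to0$ as $\Lambda\to\infty$, which is the claim.

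A coefficient-free variant would instead start from $\Tr f(|\DD|/\Lambda)-\Tr f(|\DD_S|/\Lambda)=\int_0^\infty\big(\Tr e^{-s|\DD|/\Lambda}-\Tr e^{-s|\DD_S|/\Lambda}\big)\,d\phi(s)$ (the representation used in the proof of Theorem \ref{spectralaction} and of \eqref{SAS}) and feed in Remark \ref{diff}---the integrand is positive and $\OO\big((s/\Lambda)\log^2(s/\Lambda)\big)$ for $s/\Lambda$ small---together with the exponential decay of both heat traces for large argument (Lemma \ref{t->infinity} for $|\DD|$, and the analogous bound for $|\DD_S|$ whose lowest eigenvalue is $|w|/(1-q^2)$), concluding by dominated convergence. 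I expect the only genuine work to be the bookkeeping described above, and the main obstacle to be the justification of the interchange of the infinite sums---equivalently of the limit $\Lambda\to\infty$---with $\int_0^\infty d\phi$. This is precisely what assumption \eqref{Phi} is tailored for, as it guarantees $\int_0^\infty s^k e^{\varepsilon s}\,d\phi(s)<\infty$ for every $k\in\N$ and $\varepsilon>0$; the decisive structural fact is that every surviving term carries a factor $\Lambda^{-1}$---coming from $\Re(\alpha)\le-2$ in $E_1$ and from $\wt{b}_n$ in $E_2$, i.e.\ ultimately from $d_0=0$---which outruns the $\log^2 t$ growth of the heat trace as $t\to0^+$.
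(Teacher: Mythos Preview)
Your argument is correct and is precisely the approach the paper has in mind: the paper's ``proof'' is the single sentence preceding the Corollary, namely that it follows from Theorem~\ref{spectralaction}, Proposition~\ref{spectralsimplified} and Remark~\ref{diff}. You have unpacked exactly this, matching the $\Re(\alpha)=0$ coefficients via $d_0=0$ and then bounding the two residual tails $E_1,E_2$ using the $K_p$-estimate already established inside the proof of Theorem~\ref{spectralaction}; your alternative heat-trace route through Remark~\ref{diff} is likewise what the paper's citation of that remark is gesturing at.
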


\subsection{On the fluctuations of spectral action}

In this section we shall investigate how fluctuations of the Dirac operator,
$\DD \to \DA = \DD + \Ag$, affect the spectral action when $\Ag = \sum_i a_i [\DD,b_i]$, 
for $a_i, \, b_i \in \A$, is a selfadjoint one-form. The perturbation of the square of the Dirac
operator, $X_\Ag \vc \DA^2 - \DD^2 =\DD \Ag + \Ag \DD + \Ag^2$ is of order one ($X_\Ag \in \op^1$, compare Lemma \ref{lemma1as}). 
We always may assume that $\Vert X_\Ag\, \DD^{-2}\Vert <1$ by changing $\Ag$ into $\epsilon \Ag$ 
for a small enough $\epsilon >0$: indeed, observe first, using \eqref{DA} that 
\begin{align*}
X_\Ag \DD^{-2}=\DD\Ag\DD^{-2}+\Ag F|\DD|^{-1}+\Ag^2 \DD^{-2},\text{ and } \DD\Ag\DD^{-2}=\chi^{-1} F\Ag \vert \DD\vert^{-1}+ \mathcal{O}(\vert \DD\vert^0)\,\DD^{-2},
\end{align*}
so $X_\Ag \,\DD^{-2}$ is a sum of bounded operators. \\
By dilation, $\Vert X_{\epsilon \Ag} \,\DD^{-2} \Vert \leq \epsilon \Vert \DD \Ag \DD^{-2}\Vert + \epsilon \Vert \Ag F|\DD|^{-1} \Vert + \epsilon^2 \Vert \Ag^2 \DD^{-2} \Vert$, so for $\epsilon<\epsilon_\Ag$ where $\epsilon_\Ag$ is defined by $\epsilon_\Ag \Vert \DD \Ag \DD^{-2} + \Ag F|\DD|^{-1}\Vert + \epsilon_\Ag^2 \Vert \Ag^2 \DD^{-2} \Vert=1$, and we get
\begin{align}
\label{smallA}
\Vert X_{\epsilon \Ag} \,\DD^{-2} \Vert <1,\quad \forall \epsilon <\epsilon_\Ag\,.
\end{align}

\begin{theorem}
For any fluctuation $\DA$ of the Dirac operator $\DD$ by a small enough $\Ag$ (i.e. $\Vert X_\Ag \,\DD^{-2}\Vert < 1$), the 
heat kernel expansion of $\DA$ is the same as that of $\DD$ up to terms, which vanish at $t=0$. Consequently, the leading terms of the spectral action (that is, the terms, which do not vanish in $\Lambda \to \infty$ limit) do not depend on $\Ag$.
 \end{theorem}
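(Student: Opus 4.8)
The theorem has two parts, the second (spectral action) following from the first (heat trace) through the representation $f(\vert\DA\vert/\Lambda)=\int_0^\infty e^{-(s/\Lambda)\vert\DA\vert}\,d\phi(s)$ exploited in Theorem~\ref{spectralaction}. So the plan is to establish the heat-trace statement in the sharp form
\begin{align*}
\Tr e^{-t\,\vert\DA\vert}-\Tr e^{-t\,\vert\DD\vert}=\OO(t^{1-\eps})\quad\text{as }t\to0^+,\ \text{for every }\eps>0,
\end{align*}
which says precisely that all terms of the expansion~\eqref{heat1} that do not vanish at $t=0$ (the $\log^2 t$, $\log t$, constant and oscillatory ones) are left unchanged by the fluctuation. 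First I would observe that, $\Ag$ being a bounded selfadjoint operator, the min--max principle keeps the eigenvalues of $\vert\DA\vert$ within $\norm{\Ag}$ of those of $\vert\DD\vert$; hence $\vert\DA\vert$ satisfies the hypotheses of Lemmas~\ref{HeatLemma} and~\ref{MellinLemma1}, so that $\Tr e^{-t\vert\DA\vert}=\tfrac1{2\pi i}\int_{c-i\infty}^{c+i\infty}t^{-s}\Gamma(s)\,\zeta_{\vert\DA\vert}(s)\,ds$ for any $c>0$, with $\zeta_{\vert\DA\vert}(s)\vc\Tr\vert\DA\vert^{-s}$. Everything reduces to the analysis of $\phi(s)\vc\zeta_{\vert\DA\vert}(s)-\zeta_{\vert\DD\vert}(s)$.

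Set $X_\Ag=\DA^2-\DD^2=\DD\Ag+\Ag\DD+\Ag^2\in\op^1$ and, for $\tau\in[0,1]$, $\DD_\tau^2\vc\DD^2+\tau X_\Ag=(1-\tau)\DD^2+\tau\DA^2>0$ (note $\DA^2=(1+X_\Ag\DD^{-2})\DD^2$ is invertible since $\norm{X_\Ag\DD^{-2}}<1$). The first-order variation identity $\tfrac{d}{d\tau}\Tr(\DD_\tau^2)^{-z}=-z\,\Tr\bigl(X_\Ag(\DD_\tau^2)^{-z-1}\bigr)$ — valid for $\Re(z)$ large by Duhamel's expansion and cyclicity of the trace, then continued meromorphically — after integration in $\tau$ and with $z=s/2$ yields
\begin{align*}
\phi(s)=-\tfrac{s}{2}\,\Phi(s/2),\qquad \Phi(z)\vc\int_0^1\Tr\bigl(X_\Ag(\DD_\tau^2)^{-z-1}\bigr)\,d\tau .
\end{align*}
The hypothesis $\norm{X_\Ag\DD^{-2}}<1$ keeps $(\DD_\tau^2)^{-1}=\DD^{-2}(1+\tau X_\Ag\DD^{-2})^{-1}$ of order $-2$ uniformly in $\tau\in[0,1]$; combined with $X_\Ag\in\op^1$, the operator $X_\Ag(\DD_\tau^2)^{-z-1}$ is then of order $-1-2\Re(z)$, hence trace-class for $\Re(z)>-\tfrac12$ (it is a bounded operator times $\vert\DD\vert^{-\delta}$ with $0<\delta<1+2\Re(z)$, and $\vert\DD\vert^{-\delta}$ is trace-class). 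Since $(\DD_\tau^2)^{-iy}$ is unitary, $\norm{X_\Ag(\DD_\tau^2)^{-z-1}}_1$ depends on $z$ only through $\Re(z)$, so $\Phi$ is holomorphic and bounded on each vertical line of the half-plane $\Re(z)>-\tfrac12$. Consequently $\phi$ is holomorphic on $\{\Re(s)>-1\}$ and, because of the prefactor $-s/2$, vanishes at $s=0$.

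To conclude the heat-trace claim, use $s\,\Gamma(s)=\Gamma(s+1)$:
\begin{align*}
\Tr e^{-t\,\vert\DA\vert}-\Tr e^{-t\,\vert\DD\vert}=\tfrac1{2\pi i}\int_{c-i\infty}^{c+i\infty}t^{-s}\Gamma(s)\phi(s)\,ds=-\tfrac1{4\pi i}\int_{c-i\infty}^{c+i\infty}t^{-s}\,\Gamma(s+1)\,\Phi(s/2)\,ds .
\end{align*}
This integrand is holomorphic for $\Re(s)>-1$, so one pushes the contour to $\Re(s)=-1+\eps$ — the horizontal connecting pieces vanishing in the limit, exactly as in the estimates of $I_{H^\pm}$ in the proof of Proposition~\ref{MellinLemma2}, from the exponential decay of $\Gamma$ on vertical lines together with the boundedness of $\Phi$ — getting $\bigl|\Tr e^{-t\vert\DA\vert}-\Tr e^{-t\vert\DD\vert}\bigr|\le C_\eps\,t^{1-\eps}$. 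Equivalently, $\Gamma(s)\phi(s)=-\tfrac12\Gamma(s+1)\Phi(s/2)$ is holomorphic on $\{\Re(s)\ge0\}$, so the residue of $t^{-s}\Gamma(s)\zeta_{\vert\DA\vert}(s)$ at every pole with $\Re(s)\ge0$ equals that of $t^{-s}\Gamma(s)\zeta_{\vert\DD\vert}(s)$, which is the fluctuation-independence of the non-vanishing-at-$0$ part of~\eqref{heat1}. Combining this with the large-$t$ decay $\OO(e^{-ct})$ of both traces (Lemma~\ref{t->infinity}, applicable since $\DA$ is invertible) one has $\bigl|\Tr e^{-t\vert\DA\vert}-\Tr e^{-t\vert\DD\vert}\bigr|\le C_\eps\,t^{1-\eps}e^{-ct}$ for all $t>0$; hence, since $\int_0^\infty s^{1-\eps}d\phi(s)<\infty$ by~\eqref{Phi}, for $f=\Lc(d\phi)\in\mathcal{C}$
\begin{align*}
\Tr f(\vert\DA\vert/\Lambda)-\Tr f(\vert\DD\vert/\Lambda)=\int_0^\infty\bigl(\Tr e^{-(s/\Lambda)\vert\DA\vert}-\Tr e^{-(s/\Lambda)\vert\DD\vert}\bigr)d\phi(s)=\OO(\Lambda^{-1+\eps}),
\end{align*}
so the coefficients $a_{\alpha,p}$ of~\eqref{SA} with $\Re(\alpha)=0$ — those multiplying the powers $\Lambda^{\alpha}$, $\log\Lambda$, $\log^2\Lambda$ that survive as $\Lambda\to\infty$ — are the same for $\DA$ and for $\DD$.

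I expect the main obstacle to be the uniform (in $\tau$ and along vertical lines) order-$(-1-2\Re(z))$ bound on $X_\Ag(\DD_\tau^2)^{-z-1}$: it really uses that $X_\Ag\in\op^1$ — which is where Lemma~\ref{lemma1as} and the identity~\eqref{DA} enter, the bound being false if $X_\Ag$ is only known to be bounded — together with $\norm{X_\Ag\DD^{-2}}<1$ to control $(\DD_\tau^2)^{-1}$ uniformly in $\tau$; once this is in hand the remainder is the contour bookkeeping already carried out for $\DD$ in Proposition~\ref{MellinLemma2}. An equivalent but heavier route is to expand $\zeta_{\DA^2}$ in the full Duhamel series, whose $k$-th term has the form $c_k(z)\,\zeta_\DD^{P_k}(2z+k)$ with $c_k$ entire vanishing at $z=0$ and $P_k\in\B$, so that Proposition~\ref{DimSp} again confines the extra poles to $\{\Re(s)\le-1\}$; one then has to argue the convergence of that series.
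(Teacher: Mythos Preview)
Your argument is correct and reaches the same conclusion as the paper, but by a genuinely different route. The paper writes $\vert\DA\vert^{-s}=\tfrac{\sin(\pi s/2)}{\pi}\int_0^\infty \mu^{-s/2}(\DA^2+\mu)^{-1}\,d\mu$, expands the resolvent as a Neumann series $(\DA^2+\mu)^{-1}=(\DD^2+\mu)^{-1}\sum_{n\ge0}\bigl(X_\Ag(\DD^2+\mu)^{-1}\bigr)^n$ using $\Vert X_\Ag\DD^{-2}\Vert<1$, and bounds the trace of the $n\ge1$ tail directly via $\Tr\vert\DD\vert(\DD^2+\mu)^{-2}\le\min\bigl(\Tr\vert\DD\vert^{-3},\tfrac1{4\mu}\Tr\vert\DD\vert^{-1}\bigr)$; this shows $\zeta_{\DA}-\zeta_\DD$ is regular on $0\le\Re(s)<2$, hence the poles on the imaginary axis coincide and the heat-trace/spectral-action leading terms agree.

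Your homotopy $\DD_\tau^2=\DD^2+\tau X_\Ag$ with the variation identity $\phi(s)=-\tfrac{s}{2}\Phi(s/2)$ is more conceptual: the factor $s$ makes the vanishing of $\phi$ at $s=0$ transparent, and your trace-class analysis of $X_\Ag(\DD_\tau^2)^{-z-1}$ pushes the regularity of $\phi$ all the way to $\Re(s)>-1$, giving the quantitative bound $\Tr e^{-t\vert\DA\vert}-\Tr e^{-t\vert\DD\vert}=\OO(t^{1-\eps})$ which the paper does not state. The paper's approach, on the other hand, avoids the interpolated family altogether and needs only elementary Neumann/resolvent bounds; it does not require the (true but slightly delicate) step you flag yourself, namely the uniform-in-$\tau$ comparability of $(\DD_\tau^2)^{-x-1}$ with $\vert\DD\vert^{-2x-2}$. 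That step can be filled in by observing that $X_\Ag$ is $\DD^2$-form-bounded with relative bound $0$ (from $X_\Ag\in\op^1$), so $(1-\eps)\DD^2-C_\eps\le\DD_\tau^2\le(1+\eps)\DD^2+C_\eps$, whence $\vert\DD\vert(\DD_\tau^2)^{-1/2}$ is bounded uniformly in~$\tau$ and $(\DD_\tau^2)^{-x-1/2}$ is trace-class for $x>-1/2$; this makes your $\Phi$ holomorphic and vertically bounded on $\Re(z)>-1/2$, and the contour shift then goes through exactly as you describe. Your final paragraph correctly identifies this as the one nontrivial point.
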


\begin{proof}
We shall begin by investigating 
$|\DA|^{-s}$. Using $ x^{-s} = \tfrac{\sin (\pi s)}{\pi} \int_0^\infty \tfrac{\mu^{-s} }{x+\mu} \,d\mu$ for $0< \Re(s) <1$ and the functional calculus, we obtain:
\begin{align}
\label{sin}
|\DA|^{-s} = \tfrac{\sin (\pi s/2)}{\pi} \int_0^\infty \tfrac{\mu^{-s/2}}{\DA^2+\mu} \,d\mu, \quad \text{for } 0< \Re(s) <1.
\end{align}
The operator $X_\Ag$ of order one will be used in the power series expansion of $(\DA^2+\mu)^{-1}$.\\
Since $\Ag$ is small enough, 
\begin{align*}
(\DA^2+\mu)^{-1} &= 
\left( (1 +X_\Ag (\DD^2 +\mu)^{-1}) \,(\DD^2+\mu) \right)^{-1} =
(\DD^2+\mu)^{-1}  \sum_{n=0}^\infty \left( X_\Ag (\DD^2 +\mu)^{-1} \right)^n\\
& = (\DD^2+\mu)^{-1}+ (\DD^2+\mu)^{-1}\sum_{n=1}^\infty \left( X_\Ag (\DD^2 +\mu)^{-1} \right)^n
\end{align*}
and by \eqref{sin}
\begin{align}
\label{approx}
|\DA|^{-s} = |\DD|^{-s} +\tfrac{\sin (\pi s/2)}{\pi} \int_0^\infty d\mu \,\mu^{-s/2} \, (\DD^2+\mu)^{-1}\,Y,\quad Y\vc \sum_{n=1}^\infty \big( X_\Ag (\DD^2 +\mu)^{-1} \big)^n.
\end{align}
Since
\begin{align*}
\Vert Y\Vert & \leq \sum_{n=1}^\infty \Vert X_\Ag (\DD^2 +\mu)^{-1} \Vert^n \leq \sum_{n=1}^\infty \big(\Vert X_\Ag \DD^{-2}\Vert\, \Vert \DD^2(\DD^2+\mu)^{-1}\Vert\big)^n \leq \sum_{n=1}^\infty \Vert X_\Ag \DD^{-2} \Vert^n \leq C_\Ag,
\end{align*}
$Y$ is a bounded operator. Thanks to the relation $Y=X_\Ag(\DD^2+\mu)^{-1}\,(1+Y)$, we get 
\begin{align*}
\left\vert \Tr \big(\int_0^\infty d\mu \,\mu^{-s/2} \, (\DD^2+\mu)^{-1}Y\big) \right\vert &=\left\vert \Tr \big(\int_0^\infty d\mu \,\mu^{-s/2} \, (\DD^2+\mu)^{-1}X_\Ag (\DD^2+\mu)^{-1}(1+Y) \big) \right\vert \\
& \leq \Vert 1 + Y \Vert\int_0^\infty d\mu\, \mu^{-s/2}\, \Tr \big(\vert (\DD^2+\mu)^{-1}X_\Ag (\DD^2+\mu)^{-1} \vert \big).
\end{align*}
For a positive trace-class operator $U$ and a selfadjoint unbounded operator $V$, we have
$$
\Tr \vert UVU \vert = \Tr \vert U(V_+-V_-)U \vert \leq \Tr  UV_+U+\Tr UV_-U=\Tr U\vert V\vert U=\Tr \vert V \vert U^{2}.
$$
thus 
\begin{align*}
\Tr \big(\vert (\DD^2+\mu)^{-1}X_\Ag (\DD^2+\mu)^{-1} \vert \big)&\leq \Vert X_\Ag \vert \DD \vert^{-1}\Vert  \,\Tr \,\vert\DD\vert(\DD^2+\mu)^{-2} \\
&\leq \Vert X_\Ag \vert \DD \vert^{-1}\Vert \, \min \big( \Tr \vert \DD \vert^{-3},\,\tfrac{1}{4\mu} \Tr \vert \DD\vert^{-1}\big)
\end{align*}
using for the last inequality either $(\DD^{2}+\mu)^{-1} \leq \DD^{-2}$ or $\DD^{2}+\mu\geq 2 \sqrt{\mu}\, \vert \DD \vert$.\\
Let $\mu_0$ be defined by $\Tr \vert \DD \vert^{-3}=\tfrac{1}{4\mu_0}\,\Tr \vert \DD\vert^{-1}$. Then, gathering all inequalities,
\begin{align*}
&\hspace{-0.9cm}\left\vert \,\Tr\big(\tfrac{\sin (\pi s/2)}{\pi} \int_0^\infty d\mu \,\mu^{-s/2} \, (\DD^2+\mu)^{-1}Y\big)\,\right\vert\\
&\leq \Vert 1 + Y \Vert \, \Vert X_\Ag \vert \DD \vert^{-1}\Vert \, \left\vert\tfrac{\sin (\pi s/2)}{\pi} \right\vert \, \big[\Tr \vert \DD\vert^{-3}\,\int_0^{\mu_0} d\mu\, \mu^{-s/2}+\Tr \vert \DD \vert^{-1}\int_{\mu_0}^\infty d\mu\,\mu^{-s/2} \tfrac{1}{4\mu} \big]\\
&=\Vert 1 + Y \Vert\, \Vert X_\Ag \vert \DD \vert^{-1}\Vert \, \left\vert \tfrac{\sin (\pi s/2)}{\pi} \right\vert \, \big[\Tr (\vert \DD\vert^{-3})\,\tfrac{2\mu_0^{-s/2-1}}{2-s}+\Tr (\vert \DD \vert^{-1})\,\tfrac{\mu_0^{-s/2}}{2s} \big].
\end{align*}
Since the right hand side of last equality is regular function of $s$ for $0\leq \Re(s)<2$, the use of \eqref{approx} gives
\begin{align*}
\zeta_{\DD_\Ag}(s)=\zeta_\DD(s) + \text{ a regular function of $s$ for }0\leq \Re(s) < 2.
\end{align*}
Both $\zeta_{\DD_\Ag}(s)$ and $\zeta_{\DD}(s)$ are regular for $\Re(s)>0$, so if they difference is regular at $\Re(s)=0$,
the poles on the imaginary axis are identical for both functions. Therefore, in the heat trace the expansion (\ref{heat_coeff}) 
the terms that arise from $\Re(\alpha)=0$ coincide in both cases and the heat trace expansion is identical up to terms which
vanish at $t=0$. Similarly, in the spectral action expansion (\ref{SA}) the terms in the sum over $\alpha\in \text{Sd}_1=-2\N + i\tfrac{2 \pi}{\log q} \,\Z$ with $\Re(\alpha)=0$ are
the same, so the spectral action asymptotics is identical for $\DD$ and $\DD_\Ag$ up to terms, which vanish at 
$\Lambda \to \infty$.
\end{proof}

\section {Conclusions}

The study of the spectral triple over the standard Podle\'s sphere has brought a richness  
of surprising results. Having left the world of classical or almost classical Dirac operators, we
find new and interesting phenomena. Let us summarize the most important ones:
\begin{itemize}
\item The spectral zeta-function $\zD$ is not regular at $0$ and has poles at an infinite square 
lattice of points in the left half of the complex plane.
\item The dimension spectrum can be computed despite of the lack of regularity and contains second order poles.
\item The heat trace has a computable exact expansion, not only asymptotic for $t>0$.
\item The leading term of the heat-trace expansion is $\log^2 t$ (and, consequently, the large 
$\Lambda$-expansion of spectral action is $\log^2 \Lambda$).
\item The heat-trace is oscillatory in $t$ (and, consequently, the spectral action is oscillatory 
in $\Lambda$) due to complex poles in the dimension spectrum.
\item The leading parts of the spectral action do not depend on small perturbations 
of the Dirac operator. 
\end{itemize}

Each of the above listed phenomena is a new one, at least in the context of spectral triples 
and raises a lot of questions. First of all, the behavior of the zeta-function resembles that of
zeta functions for fractals. Although that similarity might prove to be superficial only, it is certainly 
worth further investigations. The appearance of $\log$ terms in the heat-trace and spectral action
suggests, on the other hand, the existence of singularities \cite{Lescure}. It would be interesting to 
find more common features of this type. Finally, as the heat-trace is an exact formula it would be
interesting to characterize the family of operators, for which the exact formula (of the type, which we found) is possible.

The coefficients of the heat-trace expansion are, in the case of classical Riemannian geometry, 
expressed in terms of geometrical objects. As we are dealing with a $0$-dimensional spectral
triple, no usual geometrical objects can be identified in that way. However, the stability of the leading terms with respect to small perturbations of the Dirac operator suggests that they also carry some information about the
object in question and its geometry, and it would be very interesting to identify them.
 
Moreover, the spectral action appears to have an absolutely undesired feature, as it includes oscillating 
terms. It is doubtful whether such behavior is admissible from the physical point of view, and what
possible consequences it would signify. 

Finally, let us mention that the $q \to 1$ limit cannot be easily recovered from the obtained heat-trace
formula. The heat kernel expansion diverges for $q \to 1$ and the reason is that the analytic continuation 
of $\zD$ simply does not hold for $q = 1$. This situation already appeared in $SU_q(2)$ case \cite{ILS}.

\section*{Appendix}

The class $\mathcal{C}$ of functions $f=\mathcal{L}(\phi)$ has a non empty intersection with the Schwartz space $\mathcal{S}(\R^+)$, so we could restrict to the case where $ f_{\alpha,k}=\< \phi, \Lc(\eta_{\a,k}) \>=\< \eta_{\alpha,k} ,f\> $,  for  $f\in \mathcal{S}(\R^+)\cap \,\mathcal{C}$ and some distributions $\eta_{\alpha,k} \in \mathcal{S}'(\R^+)$ to get the coefficient of \eqref{SA} directly in terms of $f$. \\
Thus here we compute the intrinsic distributions $\eta_{\alpha,k}$ .

Let us first fix notations: \\$F \ast G$ stands for convolution of distributions $F$ and $G$, 
$F^{\ast n} = \underset{n}{\underbrace{F \ast F \ast \ldots \ast F}}$ with the convention $F^{\ast 0} = \delta$.  
The Euler gamma constant is denoted by $\gamma$, $\Fp$ denotes the Hadamard finite part \cite{e},   $\Theta$ is the Heaviside step function and $H(k) = \sum_{m=0}^k \tfrac{1}{m}$ is the $k$-th harmonic number.

The following set of lemmas give the exact formulas for distributions $\eta_{\alpha,k}$ for any $\alpha \in \C$ and $k \in \N$.

\renewcommand{\theequation}{A.\arabic{equation}}
\setcounter{equation}{0} 
\renewcommand{\theappendixlemma}{A.\arabic{appendixlemma}}

\begin{appendixlemma}
\label{lm1} 
For $k\in \N$ and $\alpha \in \C \setminus -\N$, the inverse Laplace transform $\eta_{\alpha,k}$ of the function 
$s \in \R^+ \mapsto s^{-\alpha} \,\log^k s$ reads
\begin{align}\label{lm1_eq}
\eta_{\alpha,k}(t) = (-1)^k \tfrac{d^k\,}{d \alpha^k} \left( \tfrac{t^{\alpha-1}}{\Gamma(\alpha)} \right), \quad t>0.
\end{align}
\end{appendixlemma}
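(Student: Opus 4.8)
The plan is to build \eqref{lm1_eq} out of the classical Euler integral by differentiating in the parameter $\alpha$ and then extending by analytic continuation. First I would recall that for $\Re(s)>0$ and $\Re(\alpha)>0$ one has $\int_0^\infty e^{-st}\,t^{\alpha-1}\,dt=\Gamma(\alpha)\,s^{-\alpha}$, equivalently $\Lc\big(t^{\alpha-1}/\Gamma(\alpha)\big)(s)=s^{-\alpha}$; this is exactly \eqref{lm1_eq} for $k=0$ when $\Re(\alpha)>0$. Next, since $\alpha\mapsto s^{-\alpha}=e^{-\alpha\log s}$ is entire with $\partial_\alpha^k\,s^{-\alpha}=(-\log s)^k\,s^{-\alpha}$, I would differentiate this identity $k$ times in $\alpha$ under the integral sign. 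The interchange of $\partial_\alpha^k$ with $\int_0^\infty e^{-st}(\cdot)\,dt$ on the half-plane $\{\Re\alpha>0\}$ is justified by Morera's theorem together with Fubini (the integral is a holomorphic function of $\alpha$ there), or concretely by dominated convergence: on a compact $K\subset\{\Re\alpha>0\}$ with $\sigma:=\inf_K\Re\alpha>0$, the function of $t$ given by $e^{-st}\,\partial_\alpha^k\big(t^{\alpha-1}/\Gamma(\alpha)\big)$ is bounded by $C\,t^{\sigma-1}|\log t|^k$ near $t=0$ and by $C\,e^{-st}t^{N}$ near $t=\infty$, both integrable, uniformly for $\alpha\in K$. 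Carrying $\partial_\alpha^k$ inside then gives $\Lc\big(\tfrac{d^k}{d\alpha^k}\tfrac{t^{\alpha-1}}{\Gamma(\alpha)}\big)(s)=(-1)^k(\log s)^k s^{-\alpha}$, hence $\eta_{\alpha,k}=(-1)^k\tfrac{d^k}{d\alpha^k}\big(t^{\alpha-1}/\Gamma(\alpha)\big)$ for $\Re(\alpha)>0$.

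To reach every $\alpha\in\C\setminus-\N$ I would pass to distributions. For $\Re(\alpha)\le 0$ the function $t^{\alpha-1}$ is no longer locally integrable at the origin, so $t^{\alpha-1}/\Gamma(\alpha)$ must be read as $\Fp(t_+^{\alpha-1})/\Gamma(\alpha)\in\mathcal{S}'(\R^+)$, with $\Fp$ the Hadamard finite part; here $\Fp(t_+^{\alpha-1})$ is the Gelfand--Shilov family, meromorphic in $\alpha$ with only simple poles at $\alpha\in-\N$, so $\Fp(t_+^{\alpha-1})/\Gamma(\alpha)$ and all its $\alpha$-derivatives are holomorphic $\mathcal{S}'(\R^+)$-valued functions on the connected set $\C\setminus-\N$ (the $k$-th derivative being a combination of finite parts of $t^{\alpha-1}\log^j t$, $0\le j\le k$, with polygamma coefficients). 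Both sides of the identity $\Lc\big((-1)^k\tfrac{d^k}{d\alpha^k}(t^{\alpha-1}/\Gamma(\alpha))\big)(s)=s^{-\alpha}\log^k s$ are then holomorphic functions of $\alpha$ on $\C\setminus-\N$ (for the left side because the Laplace pairing with the fixed function $t\mapsto e^{-st}$ is continuous on $\mathcal{S}'(\R^+)$), and they coincide on $\{\Re\alpha>0\}$ by the previous paragraph, hence they coincide on all of $\C\setminus-\N$. This is precisely the assertion that the inverse Laplace transform of $s\mapsto s^{-\alpha}\log^k s$ is given by \eqref{lm1_eq}; the relation $f_{\alpha,k}=\langle\phi,\Lc(\eta_{\alpha,k})\rangle=\langle\eta_{\alpha,k},f\rangle$ for $f=\Lc(\phi)\in\mathcal{S}(\R^+)\cap\mathcal{C}$ then follows from Fubini.

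The skeleton above is routine; the delicate points I expect to spend real effort on are (i) the uniform-on-compacts domination that legitimizes differentiating under the integral sign, and (ii) attaching to $t^{\alpha-1}/\Gamma(\alpha)$ for $\Re\alpha\le 0$ a distributional meaning whose Laplace transform is still literally $s^{-\alpha}$ — that is, checking that Hadamard regularization commutes with $\Lc$ and that the simple poles of $\Fp(t_+^{\alpha-1})$ at $\alpha\in-\N$ are exactly cancelled by the zeros of $1/\Gamma(\alpha)$. The set $-\N$ excluded from the statement is precisely this cancellation locus, where the clean description in \eqref{lm1_eq} must instead be recast in terms of $\delta^{(n)}$ and finite parts of $t^{-n-1}\log^j t$; that borderline case is the subject of the following lemma.
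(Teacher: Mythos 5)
Your proof is correct, and its first half (the Euler integral $\Lc\bigl(t^{\alpha-1}/\Gamma(\alpha)\bigr)(s)=s^{-\alpha}$ for $\Re\alpha>0$, differentiated $k$ times under the integral sign) coincides with the paper's opening step, merely with the domination argument spelled out where the paper takes it for granted. Where you genuinely diverge is the continuation to $\Re\alpha\le 0$. The paper stays inside elementary Laplace calculus: it applies the operational rule $\Lc^{-1}(-f')(t)=t\,\Lc^{-1}(f)(t)$ to $f(s)=s^{-\alpha}\log^k s$, recombines the two resulting terms via Leibniz into $(-1)^k\tfrac{d^k}{d\alpha^k}\bigl(\alpha\,t^{\alpha}/\Gamma(\alpha+1)\bigr)=(-1)^k\tfrac{d^k}{d\alpha^k}\bigl(t^{\alpha}/\Gamma(\alpha)\bigr)$, divides by $t$, and iterates strip by strip $-m-1<\Re\alpha<-m$. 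You instead interpret $t^{\alpha-1}/\Gamma(\alpha)$ from the outset as the Gelfand--Shilov finite-part family, observe that it is a holomorphic $\mathcal{S}'$-valued function on $\C\setminus(-\N)$ (the poles of $t_+^{\alpha-1}$ being killed by the zeros of $1/\Gamma$), and conclude by the identity theorem applied to the two holomorphic functions $\alpha\mapsto\langle\eta_{\alpha,k},e^{-st}\rangle$ and $\alpha\mapsto(-1)^k s^{-\alpha}\log^k s$. Your route is the more careful one: the paper's recursion silently divides a distribution by $t$ and never says what $t^{\alpha-1}$ means for $\Re\alpha\le 0$, whereas you fix the regularization once and for all and make the exclusion of $-\N$ transparent as the locus where the pole/zero cancellation degenerates. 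What the paper's approach buys in exchange is elementarity -- no Gelfand--Shilov machinery, only the standard Laplace-transform dictionary -- and it produces the formula in each strip constructively rather than by continuation. Both are complete proofs of the lemma as stated.
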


\begin{proof}
Let us first assume $\Re(\a)> 0$ and compute the Laplace transform:
\begin{align*}
\Lc \left( \tfrac{d^k\,}{d \alpha^k} \left( \tfrac{t^{\alpha-1}}{\Gamma(\alpha)} \right) \right)(s)
 = \int_0^{\infty} d t\, \tfrac{d^k}{d \alpha^k} \left( \tfrac{t^{\alpha-1}}{\Gamma(\alpha)} \right) e^{-st} 
= \tfrac{d^k}{d \alpha^k} \int_0^{\infty} d t \,\tfrac{t^{\alpha-1}}{\Gamma(\alpha)}\, e^{-st} 
 = \tfrac{d^k}{d \alpha^k} \,s^{-\alpha} = (-1)^k s^{-\alpha}  \log^k s.
\end{align*}
Now assume that $0 > \Re(\a) > -1$. Then we make use of the general formula, which holds for any distributional Laplace transform \cite{e}: $F(t) = \Lc^{-1}(f(s)) \; \Rightarrow t \, F(t) = \Lc^{-1}\left(-\tfrac{d}{ds} f(s) \right)$.\\
For $f(s) = s^{-\a} \log^k s$ we have
\begin{align*}
\Lc^{-1}\left(-\tfrac{d}{ds} f(s) \right) & = \a \, \Lc^{-1}\left( s^{-\alpha-1} \log^k s \right) - k \, \Lc^{-1}\left( s^{-\alpha-1} \log^{k-1} s \right)  \\
& = (-1)^k \a \, \tfrac{d^k\,}{d \alpha^k} \left( \tfrac{t^{\alpha}}{\Gamma(\alpha+1)} \right) + 
(-1)^{k} k \, \tfrac{d^{k-1}\,}{d \alpha^{k-1}} \left( \tfrac{t^{\alpha}}{\Gamma(\alpha+1)} \right)  \\
& = (-1)^k \tfrac{d^k\,}{d \alpha^k} \left(  \tfrac{\alpha\,t^{\alpha}}{\Gamma(\alpha+1)} \right) =
(-1)^k \tfrac{d^k\,}{d \alpha^k} \left( \tfrac{t^{\alpha}}{\Gamma(\alpha)} \right).
\end{align*}
Hence, by making use of the standard analytic continuation of the $\Gamma$ function, we have obtained  \eqref{lm1_eq} for $0 > \Re(\a) > -1$. Iterating this procedure, we can proceed to arbitrary low $\Re(\a)$, so the formula \eqref{lm1_eq} is well defined for arbitrary $\alpha \in \C \setminus -\N$.
\end{proof}
Let us remark, that since $\a$ is  complex number, the Laplace transform of $s \in \R^+ \mapsto s^{-\alpha} \,\log^k s$ is also a complex valued function.

 It is known that $\Fp \left( \tfrac{\Theta(t)}{t} \right) =\tfrac{d\,}{dt} \big(\Theta(t) \log t\big)$ \cite[(2.75)]{e} and in fact
 
\begin{appendixlemma}
\label{lm2}
The distributional inverse Laplace transform of $s\in \R^+ \mapsto \Theta(s)\,\log^k s $ where $k \in \N$ is given by 
\begin{align}\label{log}
\Lc^{-1}\big(\Theta(s)\,\log^k s  \big)(t) = (-1)^k \big[ (1-\delta_{k,0})\sum_{j=1}^k \tbinom{k}{j} \gamma^{j-1} \left[ \Fp \left( \tfrac{\Theta(t)}{t} \right) \right]^{\ast j}  + \gamma^k\, \delta(t) \big],\quad t>0.
\end{align}
\end{appendixlemma}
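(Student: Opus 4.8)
The plan is to reduce the computation to a single Laplace‑transform identity and then expand $\log^k s$ by the binomial theorem. The key identity is
\[
\Lc\big(\Fp(\tfrac{\Theta(t)}{t})\big)(s) = -(\log s + \gamma), \qquad s>0.
\]
I would obtain it by differentiating the standard formula $\Lc(\Theta(t)\,t^\nu)(s) = \Gamma(\nu+1)\,s^{-\nu-1}$ (valid and analytic in $\nu$ for $\Re\nu>-1$) with respect to $\nu$ at $\nu=0$: this gives $\Lc(\Theta(t)\log t)(s) = \big(\Gamma'(1) - \log s\big)/s = -(\gamma+\log s)/s$, and then multiplying by $s$ and invoking $\Fp(\tfrac{\Theta(t)}{t}) = \tfrac{d}{dt}(\Theta(t)\log t)$ \cite[(2.75)]{e} together with the distributional rule $\Lc(F')(s) = s\,\Lc(F)(s)$ for causal $F$ yields the identity. (Equivalently, this is the $\alpha\to 0$ incarnation of Lemma~\ref{lm1}, now read for the entire Gelfand–Shilov family $t_+^{\alpha-1}/\Gamma(\alpha)$, whose value at $\alpha=0$ is $\delta$.)

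Next, by the convolution theorem $\Lc(F\ast G) = \Lc(F)\,\Lc(G)$, the identity above gives, for every $j\ge 1$,
\[
\Lc\big([\Fp(\tfrac{\Theta(t)}{t})]^{\ast j}\big)(s) = (-1)^j\,(\log s+\gamma)^j ,
\]
while $[\Fp(\tfrac{\Theta(t)}{t})]^{\ast 0} = \delta$ has Laplace transform $1$. Now write $\log s = (\log s+\gamma) - \gamma$ and expand
\[
\Theta(s)\log^k s = \sum_{j=0}^{k}\binom{k}{j}(-\gamma)^{k-j}(\log s+\gamma)^j .
\]
Since this sum is finite and the distributional Laplace transform is injective, $\Lc^{-1}$ may be applied term by term: the $j=0$ summand produces $(-\gamma)^k\delta$, and for $j\ge1$ the summand produces $\binom{k}{j}(-\gamma)^{k-j}(-1)^j[\Fp(\tfrac{\Theta(t)}{t})]^{\ast j}$. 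Collecting the sign factors, which combine to $(-1)^k$ for every term, and the powers of $\gamma$, one arrives at \eqref{log}; the factor $(1-\delta_{k,0})$ merely records that for $k=0$ the $j$‑sum is empty and $\Lc^{-1}(\Theta(s)) = \delta$.

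The only genuine difficulties are distribution‑theoretic bookkeeping rather than hard analysis: making precise $\Fp(\tfrac{\Theta(t)}{t})$ and $\Theta(t)\,t^\nu\log^j t$ as elements of $\SS'(\R^+)$, justifying differentiation in $\nu$ under the Laplace integral near $\nu=0$, and confirming that the boundary contribution in $\Lc(F')(s) = s\,\Lc(F)(s)$ vanishes for these causal distributions. All of these are standard and can be quoted from \cite{e}; once the identity of the first step is in hand, the remainder is the finite binomial manipulation above, which I expect to be routine.
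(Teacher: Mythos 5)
Your approach is essentially the same as the paper's: establish the base identity $\Lc\big(\Fp(\Theta(t)/t)\big)(s) = -(\log s+\gamma)$, then use the convolution theorem and the binomial expansion. The paper gets the base identity by directly evaluating $\langle \Fp(\Theta(t)/t), e^{-st}\rangle = \int_0^1\frac{e^{-st}-1}{t}\,dt + \int_1^\infty\frac{e^{-st}}{t}\,dt$, whereas you obtain it by differentiating $\Lc(\Theta(t)\,t^\nu) = \Gamma(\nu+1)\,s^{-\nu-1}$ in $\nu$ at $\nu=0$; both are fine, and yours has the mild advantage of slotting directly into the analytic family used in Lemma~\ref{lm1}.

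However, there is a gap in your last step: you claim that after collecting signs and powers of $\gamma$ ``one arrives at \eqref{log}'', but this is not what your binomial expansion gives. Your own bookkeeping yields
\begin{align*}
\Lc^{-1}\big(\Theta(s)\,\log^k s\big)(t) = (-1)^k\Big[\gamma^k\,\delta(t) + \sum_{j=1}^k \tbinom{k}{j}\,\gamma^{k-j}\,\big[\Fp(\tfrac{\Theta(t)}{t})\big]^{\ast j}\Big],
\end{align*}
with exponent $\gamma^{k-j}$, whereas \eqref{log} as printed has $\gamma^{j-1}$. These agree only for $k=1$. For $k=2$ a direct check confirms $\gamma^{k-j}$ is correct: $\Lc\big(\gamma^2\delta + 2\gamma\,\Fp(\tfrac{\Theta}{t}) + [\Fp(\tfrac{\Theta}{t})]^{\ast 2}\big)(s) = \gamma^2 + 2\gamma(-\gamma-\log s) + (\gamma+\log s)^2 = \log^2 s$, while the printed formula with $\gamma^{j-1}$ does not reproduce $\log^2 s$. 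So the exponent $\gamma^{j-1}$ in the lemma (and the same exponent that propagates into Lemma~\ref{lm4}) appears to be a misprint in the paper; your derivation is correct but you should have noticed the mismatch rather than asserting agreement with \eqref{log}.
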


\begin{proof}
By definition, the Laplace transform of the distribution $F$ is $\Lc(F)(s) = \langle F(t), e^{-st} \rangle$, so 
$\Lc\big(\Fp ( \tfrac{\Theta(t)}{t} ) \big)(s)=\int_0^1\tfrac{e^{-st} -1}{t}\,dt +\int_1^\infty \tfrac{e^{-st}}{t}\,dt= -\gamma -\log s$. Thus    \eqref{log} is obtained for $k=1$ by $\Lc\big(\Fp ( \tfrac{\Theta(t)}{t} ) +\gamma\,\delta(t)\big)(s)=-\log s$ since $s>0$. 
Using $\Lc^{-1}(f g) = \Lc^{-1}(f) \ast \Lc^{-1}(g)$, we get $\Lc^{-1}(\Theta(s)\,\log^k s  )(t) = [ \Lc^{-1}(\Theta(s)\,\log s  ) ]^{\ast k} (t)$. 
\\
The formula (\ref{log}) follows from $\delta \ast f = f \ast \delta = f$ and the standard binomial expansion.
\end{proof}

\begin{appendixlemma}
\label{lm3}
We have the following formula for the distributional derivative of $\Fp \left( \tfrac{\Theta(t)}{t} \right)$:
\begin{align}\label{DnFp}
\frac{d^n\,}{d t^n} \Fp \left( \tfrac{\Theta(t)}{t} \right) & = (-1)^n n! \Fp \left( \tfrac{\Theta(t)}{t^{n+1}} \right) - H(n) \,\delta^{(n)}(t).
\end{align}
\end{appendixlemma}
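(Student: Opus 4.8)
The plan is to reduce the statement to the already-quoted identity $\Fp\!\left(\tfrac{\Theta(t)}{t}\right)=\tfrac{d}{dt}\big(\Theta(t)\log t\big)$ from \cite[(2.75)]{e} and then differentiate it $n$ times by duality against a test function $\varphi$. First I would write
\[
\Big\langle \tfrac{d^n\,}{dt^n}\Fp\!\big(\tfrac{\Theta(t)}{t}\big),\ \varphi\Big\rangle=(-1)^{n+1}\int_0^\infty \log t\ \varphi^{(n+1)}(t)\,dt ,
\]
which is an absolutely convergent integral since $\log$ is locally integrable at the origin and $\varphi$ has compact support. The whole point is then to turn the right-hand side into an expression involving $\Fp\!\left(\tfrac{\Theta(t)}{t^{n+1}}\right)$.

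To do so I would truncate the integral at $\epsilon>0$ and integrate by parts $n+1$ times. The first integration by parts moves $\varphi^{(n+1)}$ to $\varphi^{(n)}$ against $\log t$ and produces the boundary term $-\log\epsilon\,\varphi^{(n)}(\epsilon)$ together with $-\int_\epsilon^\infty\tfrac{\varphi^{(n)}(t)}{t}\,dt$; integrating $\int_\epsilon^\infty\tfrac{\varphi^{(n)}}{t}\,dt$ by parts $n$ more times gives
\[
\int_\epsilon^\infty \tfrac{\varphi^{(n)}(t)}{t}\,dt=-\sum_{j=1}^n (j-1)!\,\tfrac{\varphi^{(n-j)}(\epsilon)}{\epsilon^{j}}+n!\int_\epsilon^\infty \tfrac{\varphi(t)}{t^{n+1}}\,dt .
\]
By the definition of the Hadamard finite part used in \cite{e}, $\int_\epsilon^\infty \tfrac{\varphi(t)}{t^{n+1}}\,dt$ equals $\langle \Fp(\tfrac{\Theta}{t^{n+1}}),\varphi\rangle$ up to explicit divergent contributions $\propto \epsilon^{i-n}$ for $0\le i\le n-1$ and $\propto\log\epsilon$, carrying the Taylor coefficients $\varphi^{(i)}(0)$. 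Expanding $\varphi^{(n-j)}(\epsilon)$ and $\varphi^{(n)}(\epsilon)$ in Taylor series at $0$ and letting $\epsilon\to0$, all negative powers of $\epsilon$ and the $\log\epsilon$-term are forced to cancel (the left-hand side being a convergent integral), and the only finite remainder of the boundary terms is $\big(\sum_{j=1}^n\tfrac{(j-1)!}{j!}\big)\varphi^{(n)}(0)=H(n)\,\varphi^{(n)}(0)$ — this is exactly where the harmonic number appears. Thus $\int_0^\infty\log t\,\varphi^{(n+1)}=H(n)\varphi^{(n)}(0)-n!\,\langle \Fp(\tfrac{\Theta}{t^{n+1}}),\varphi\rangle$, and plugging this back, together with $\varphi^{(n)}(0)=(-1)^n\langle\delta^{(n)},\varphi\rangle$, yields \eqref{DnFp}. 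For $n=0$ the claim is the quoted identity itself, since $H(0)=0$.

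The delicate point I expect is the $\epsilon\to0$ bookkeeping: one has to match the precise finite-part normalisation of $\tfrac{\Theta}{t^{n+1}}$ from \cite{e} against the boundary terms generated by the $n+1$ integrations by parts, verify that the coefficient of every $\epsilon^{i-n}$ really is zero, and confirm that the surviving constant is $\sum_{j=1}^n 1/j$ rather than some other combination of factorials. An alternative that avoids the full expansion is to first establish the one-step recursion $\tfrac{d}{dt}\Fp(\tfrac{\Theta}{t^{m}})=-m\,\Fp(\tfrac{\Theta}{t^{m+1}})+\tfrac{(-1)^m}{m!}\,\delta^{(m)}$ for every $m\geq 1$ (the case $m=1$ being obtained as above with $n=1$) and then induct on $n$: differentiating $(-1)^n n!\,\Fp(\tfrac{\Theta}{t^{n+1}})-H(n)\delta^{(n)}$ and applying the recursion reproduces the same expression with $n$ replaced by $n+1$, the identity $H(n)+\tfrac1{n+1}=H(n+1)$ closing the loop.
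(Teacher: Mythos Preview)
Your argument is correct. Your main route---writing $\Fp(\Theta/t)$ as $\tfrac{d}{dt}(\Theta\log t)$, pairing against a test function, and then performing $n+1$ integrations by parts with an $\epsilon$-cutoff---is a genuinely different derivation from the paper's. The paper simply quotes the one-step recursion \cite[(2.76)]{e}, namely $\tfrac{d}{dt}\Fp(\Theta/t^{k})=-k\,\Fp(\Theta/t^{k+1})+\tfrac{(-1)^k}{k!}\delta^{(k)}$, and then inducts on $n$; this is exactly the alternative you sketch at the end of your proposal. What your direct computation buys is self-containedness (you do not need the intermediate identity for general $k$) and a transparent explanation of why the constant $H(n)=\sum_{j=1}^n 1/j$ appears: it is the accumulated constant term $\sum_{j=1}^n (j-1)!/j!$ coming from the Taylor expansion of the boundary terms $\varphi^{(n-j)}(\epsilon)/\epsilon^{j}$. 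The paper's inductive proof is shorter and avoids the $\epsilon$-bookkeeping you flagged as delicate, at the cost of taking (2.76) as a black box.
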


\begin{proof}
 We should proceed by induction on $n$ by making use of the following formula \cite[(2.76)]{e}
\begin{align}\label{DFp}
\tfrac{d\,}{d t} \Fp  ( \tfrac{\Theta(t)}{t^k} ) = - k \Fp ( \tfrac{\Theta(t)}{t^{k+1}} ) + \tfrac{(-1)^k}{k!} \delta^{(k)}(t).
\end{align}
The first step for $n=1$ follows directly from (\ref{DFp}) for $k=1$. Now, assume $n>1$ and suppose (\ref{DnFp}) is satisfied for $n-1$. Then we have
\begin{align*}
\tfrac{d^n\,}{d t^n} \Fp ( \tfrac{\Theta(t)}{t} ) & = (-1)^{n-1} (n-1)! \,\tfrac{d\,}{d t} \Fp ( \tfrac{\Theta(t)}{t^{n}} ) - H(n-1)\, \delta^{(n-1)}(t)  \\
& = (-1)^n n! \Fp ( \tfrac{\Theta(t)}{t^{n+1}} ) - \tfrac{1}{n} \,\delta^{(n)}(t) - H(n-1) \,\delta^{(n)}(t)  \\
& = (-1)^n n! \Fp ( \tfrac{\Theta(t)}{t^{n+1}} ) - H(n) \,\delta^{(n)}(t).
\end{align*}
\end{proof}

\begin{appendixlemma}
\label{lm4}
The distributional inverse Laplace transform $\eta_{-n,k}$ of 
$s \in \R \mapsto \Theta(s)\,s^{n} \,\log^k s$ where $n \in \N$ and $0\neq k\in \N$ is given by 
\begin{align}\label{phiNK}
\eta_{-n,k}(t) & = (-1)^{k+n} n! \,\sum_{j = 1}^{k}\, \tbinom{k}{j} \,\gamma^{j-1} \sum_{i=1}^j (-1)^{i+1} \,H(n)^{i-1}\,
\Fp \left( \tfrac{\Theta(t)}{t^{n+1}} \right) \ast [ \Fp \left( \tfrac{\Theta(t)}{t} \right) ]^{\ast (j-i)} \notag \\
& \hspace{1cm}+ (-1)^k [ \gamma^{-1} \left( (1 - \gamma\, H(n) )^k - 1 \right) + \gamma^k ] \,\delta^{(n)}(t),\quad t>0.
\end{align}
\end{appendixlemma}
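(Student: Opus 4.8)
The plan is to reduce Lemma \ref{lm4} to Lemmas \ref{lm2} and \ref{lm3} by the distributional rule that multiplication by $s$ on the Laplace side is differentiation on the $t$ side: for a tempered distribution $F$ supported in $[0,\infty)$ one has $\Lc(F')(s)=s\,\Lc(F)(s)$, hence $\Lc^{-1}\big(s^{n}f(s)\big)=\tfrac{d^{n}}{dt^{n}}\Lc^{-1}\big(f(s)\big)$ in $\mathcal{S}'(\R^{+})$. Taking $f(s)=\Theta(s)\log^{k}s$ and feeding in \eqref{log} of Lemma \ref{lm2} (where, since $k\neq 0$, the factor $1-\delta_{k,0}$ equals $1$), this gives
\[
\eta_{-n,k}(t)=(-1)^{k}\Big[\sum_{j=1}^{k}\tbinom{k}{j}\gamma^{j-1}\,P_{j}(t)+\gamma^{k}\,\delta^{(n)}(t)\Big],\qquad P_{j}:=\tfrac{d^{n}}{dt^{n}}\,[\Fp(\tfrac{\Theta(t)}{t})]^{\ast j},
\]
so the whole task is to compute the $P_{j}$.

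First I would use that $\tfrac{d}{dt}$ commutes with convolution: writing $F:=\Fp(\tfrac{\Theta(t)}{t})$ we get $P_{j}=\tfrac{d^{n}}{dt^{n}}\big(F\ast F^{\ast(j-1)}\big)=F^{(n)}\ast F^{\ast(j-1)}$, and Lemma \ref{lm3} supplies $F^{(n)}=(-1)^{n}n!\,\Fp(\tfrac{\Theta(t)}{t^{n+1}})-H(n)\,\delta^{(n)}$. Since $\delta^{(n)}\ast F^{\ast(j-1)}=\tfrac{d^{n}}{dt^{n}}F^{\ast(j-1)}=P_{j-1}$, this produces the recursion $P_{j}=(-1)^{n}n!\,\Fp(\tfrac{\Theta(t)}{t^{n+1}})\ast F^{\ast(j-1)}-H(n)\,P_{j-1}$ with $P_{0}=\delta^{(n)}$. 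Unfolding this geometric-type recurrence — a one-line induction on $j$, the base case $j=1$ being precisely Lemma \ref{lm3} — yields the closed form
\[
P_{j}=(-1)^{n}n!\sum_{i=1}^{j}(-1)^{i+1}H(n)^{i-1}\,\Fp(\tfrac{\Theta(t)}{t^{n+1}})\ast F^{\ast(j-i)}+(-1)^{j}H(n)^{j}\,\delta^{(n)}(t).
\]

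Then I would substitute this back into the expression for $\eta_{-n,k}$. The terms carrying $\Fp(\tfrac{\Theta(t)}{t^{n+1}})\ast[\cdots]$ collect, after pulling out the common factor $(-1)^{k+n}n!$, into exactly the first line of \eqref{phiNK} (using $F^{\ast(j-i)}=[\Fp(\tfrac{\Theta(t)}{t})]^{\ast(j-i)}$). The $\delta^{(n)}$ terms have total coefficient $(-1)^{k}\big[\sum_{j=1}^{k}\tbinom{k}{j}\gamma^{j-1}(-1)^{j}H(n)^{j}+\gamma^{k}\big]=(-1)^{k}\big[\gamma^{-1}\sum_{j=1}^{k}\tbinom{k}{j}(-\gamma H(n))^{j}+\gamma^{k}\big]$, and the binomial theorem rewrites $\sum_{j=1}^{k}\tbinom{k}{j}(-\gamma H(n))^{j}=(1-\gamma H(n))^{k}-1$, giving the $\delta^{(n)}$ coefficient in \eqref{phiNK}.

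The step that needs care rather than mere bookkeeping is justifying the distributional manipulations in $\mathcal{S}'(\R^{+})$: the Hadamard finite parts $\Fp(\tfrac{\Theta(t)}{t^{m}})$ are distributions of finite order with support in $[0,\infty)$, so their pairwise convolutions are well defined and associative, and $\tfrac{d^{n}}{dt^{n}}$ may legitimately be carried through the finite sum over $j$ and through each convolution factor; once this is granted, the signs and combinatorial factors fall into place automatically. This also explains why Lemma \ref{lm4} must be treated \emph{apart} from Lemma \ref{lm1}: at $\alpha=-n$ the prefactor $1/\Gamma(\alpha)$ in \eqref{lm1_eq} vanishes, so the $\log$-expansion of Lemma \ref{lm2} is genuinely needed here.
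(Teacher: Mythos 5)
Your proof is correct and follows essentially the same route as the paper: reduce to convolving $\delta^{(n)}$ with the result of Lemma \ref{lm2}, prove the closed form for $\frac{d^n}{dt^n}\bigl[\Fp(\Theta(t)/t)\bigr]^{\ast j}$ by induction on $j$ (your ``unfolded recursion'' is exactly the paper's inductive step, just phrased as a recurrence with $P_0=\delta^{(n)}$), then collect the $\delta^{(n)}$-coefficients with the binomial theorem. The only cosmetic difference is that you make the distributional bookkeeping and the comparison with Lemma \ref{lm1} explicit, which the paper leaves implicit.
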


\begin{proof}
$\Lc^{-1} \big(\Theta(s)\,s^{n} \,\log^k s \big)(t) = \big( \Lc^{-1}(\Theta(s) \log^k s) \ast \Lc^{-1}(s^{n}) \big) (t)$, 
in which $\Lc^{-1}(s^{n})(t) = \delta^{(n)}$ and $\Lc^{-1}(\Theta(s) \, \log^k s )(t)$ is given by (\ref{log}). Since $\delta^{(n)} \ast T = D^n T$, where $D$ denotes the derivative operator and $D^n ( T^{\ast j} ) = ( D^n T) \ast T^{\ast(j-1)}$ we should proceed by induction on $j$. \\We claim that for $n\geq 1$, $j \geq 1$,
\begin{align}\label{DnFpj}
\tfrac{d^n\,}{d t^n} \left[ \Fp ( \tfrac{\Theta(t)}{t} ) \right]^{\ast j} &= (-1)^n n!\, \sum_{i=1}^j (-1)^{i+1} H(n)^{i-1} \Fp ( \tfrac{\Theta(t)}{t^{n+1}} ) \ast \left[ \Fp( \tfrac{\Theta(t)}{t} ) \right]^{\ast (j-i)} \notag \\
&\qquad + (-1)^{j} \,H(n)^j \,\delta^{(n)}(t).
\end{align}
The first step $j=1$ is just the formula (\ref{DnFp}). Now assume (\ref{DnFpj}) is satisfied for $j-1$, then with the help of the formula (\ref{DnFp}) we deduce
\begin{align*}
\tfrac{d^n\,}{d t^n} \left[ \Fp ( \tfrac{\Theta(t)}{t} ) \right]^{\ast (j)} & = \tfrac{d^n\,}{d t^n} \left[ \Fp ( \tfrac{\Theta(t)}{t} ) \right] \ast \left[ \Fp ( \tfrac{\Theta(t)}{t} ) \right]^{\ast(j-1)}  \\
& = (-1)^n n! \Fp ( \tfrac{\Theta(t)}{t^{n+1}} ) \ast \left[ \Fp ( \tfrac{\Theta(t)}{t}) \right]^{\ast (j-1)} - H(n) \tfrac{d^n\,}{d t^n} \left[ \Fp ( \tfrac{\Theta(t)}{t} ) \right]^{\ast (j-1)}  \\
& = (-1)^n n! \Fp ( \tfrac{\Theta(t)}{t^{n+1}} ) \ast \left[ \Fp ( \tfrac{\Theta(t)}{t} ) \right]^{\ast (j-1)}  \\
& \hspace{1cm} - H(n) (-1)^n n! \sum_{i=1}^{j-1} (-1)^{i+1} H(n)^{i-1} \Fp ( \tfrac{\Theta(t)}{t^{n+1}} ) \ast \left[ \Fp ( \tfrac{\Theta(t)}{t} ) \right]^{\ast (j-i-1)}  \\
& \hspace{1cm} - H(n) (-1)^{j-1}\, H(n)^{j-1}\, \delta^{(n)}(t)  \\
& = (-1)^n n! \sum_{i=1}^j (-1)^{i+1} H(n)^{i-1} \Fp ( \tfrac{\Theta(t)}{t^{n+1}} ) \ast \left[ \Fp ( \tfrac{\Theta(t)}{t} ) \right]^{\ast (j-i)}
(-1)^{j}\, H(n)^j\, \delta^{(n)}(t).
\end{align*}  
Now, the formula (\ref{phiNK}) is obtained by combining (\ref{log}) and (\ref{DnFpj}). In the $\delta^{(n)}(t)$ term of  (\ref{phiNK}) we have used the binomial expansion to compact the formula.
\end{proof}

\section*{Acknowledgments}

We thank Philippe Dumas for a fruitful correspondence and Artur Zając for his remarks on the draft. 
\\
Project operated within the Foundation for Polish Science IPP Programme ``Geometry and Topology in 
Physical Models'' co-financed by the EU European Regional Development Fund, Operational Program Innovative 
Economy 2007-2013.

\end{document}